\def\disp{\displaystyle}
\def\crr{\cr\noalign{\vskip2mm}}
\theoremstyle{plain}
\newtheorem{theorem}{Theorem}[section]
\newtheorem{lemma}{Lemma}[section]
\newtheorem{proposition}{Proposition}[section]
\newtheorem{corollary}{Corollary}[section]
\numberwithin{equation}{section}
\theoremstyle{definition}
\newtheorem{remark}{Remark}[section]
\begin{document}
\title{{\bf Unique continuation inequalities for  the parabolic-elliptic chemotaxis system\footnote{\small
The authors were supported by the NNSF of China under grants   11971022,  11801408,
and by the Natural Science Foundation of Henan Province  202300410248.
E-mail addresses: wanggs62@yeah.net (Gengsheng Wang); guojiezheng@yeah.net (Guojie Zheng).}} }

\author{Gengsheng Wang$^{a}$, \;   Guojie Zheng$^{b}$
\\
$^a${\it Center for Applied Mathematics}\\
{\it Tianjin University, Tianjin 300072, P.R. China}\\
$^b${\it College of Mathematics and Information Science}\\
{\it Henan Normal University, Xinxiang, 453007, P.R. China}}

\date{}

 \maketitle

\begin{abstract}
This paper studies the quantitative unique continuation  for
 a semi-linear parabolic-elliptic coupled system on a bounded domain $\Omega$.
 This system is a simplified version of the chemotaxis model
 introduced by Keller and Segel in \cite{KellerSegel}.
 With the aid of priori $L^\infty$-estimates (for solutions of the system) built up in this paper, we treat
the semi-linear parabolic equation in the system as a linear parabolic equation, and then use
the frequency function method and the localization technique to build up two unique continuation inequalities for
 the system.
As a consequence of the above-mentioned two inequalities, we have
the following qualitative unique continuation property:
if one component of a solution
 vanishes in a nonempty open subset $\omega\subset\Omega$ at some time $T>0$, then the solution is identically zero.

\vspace{0.3cm}

\vspace{0.3cm}

\noindent {\bf Keywords:}~   Unique continuation inequalities;   parabolic-elliptic coupled system; frequency function method; localization technique

\vspace{0.3cm}

\noindent {\bf AMS subject classifications:}~  35K20, 35K55.
\end{abstract}

\section{Introduction}\label{sec1}
\ \ \ \

 This paper studies the unique continuation  property for  the semi-linear parabolic-elliptic coupled system:
 \begin{equation}\label{1.1}
\begin{cases}
\partial_t u(x,t)-\triangle u(x,t)+\nabla\cdot(u(x,t)\nabla v(x,t))=0,& \textrm{ in } \Omega\times(0,+\infty),\\
-\triangle v(x,t)+av(x,t)-bu(x,t)=0,& \textrm{ in } \Omega\times(0,+\infty),\\
u(x,t)=0, \: v(x,t)=0,&  \textrm{ on } \partial\Omega\times(0,+\infty),\\
u(x,0)=u_0(x),     &   \textrm{ in } \Omega,
\end{cases}
\end{equation}
 where   $\Omega$ is a bounded domain in $\mathbb{R}^n\,\,(n\ge 1)$ with a $C^2$ boundary $\partial\Omega$,
 $a$ and $b$ are  positive constants,  $u_0$ is an initial datum.
 The system \eqref{1.1} is a simplified version of the chemotaxis model
 introduced by Keller and Segel in \cite{KellerSegel}, which
 depicts the change of motion when a population reacts in response to an external chemical stimulus spread in the environment where they reside.  In  the system (\ref{1.1}), $u$ and $v$ stand for the concentration of species and the chemical substance,  respectively.
(See \cite{HillenP}.)  A slightly generalized model of (\ref{1.1})  consists of two parabolic equations:
\begin{eqnarray*}
\begin{cases}
\partial_t u(x,t)=\triangle u(x,t)-\nabla\cdot(u(x,t)\nabla v(x,t)),&  \textrm{ in } \Omega\times(0,+\infty),\\
\tau \partial_t v(x,t) = \triangle v(x,t) - av(x,t) +bu(x,t),&  \textrm{ in } \Omega\times(0,+\infty),
\end{cases}
\end{eqnarray*}
with the time scale $\tau\textless\textless 1$, which means that the time scale of the chemical diffusion is shorter than that of species. (See  \cite{HeVe}.) We refer readers to \cite{HillenP, Horstmann} for more background about  chemotaxis and its model.
In the studies of the chemotaxis model, boundary conditions can be either  Dirichlet type,    Neumann type, or  Robin type, particularly, the homogeneous Dirichlet  boundary condition means the zero density on the boundary. (See  \cite{MLewis}.)  This kind of Keller-Segel system was studied in \cite{DiazNagai, Guo, Jager}, where
the local existence, the global  existence, and the blow-up  phenomenon of solutions were obtained.

  This paper aims to show two unique continuation inequalities
    for the system (\ref{1.1}), from which,
 one can directly get
the following qualitative unique continuation property:
if one component of a solution
 vanishes in a nonempty open subset $\omega\subset\Omega$ at some time $T>0$, then the solution is identically zero.

 Most studies on the unique continuation property for PDEs  focus on the linear cases.
 In  \cite{Landis},  the authors reduced parabolic equations (with constant coefficients)
  to elliptic forms. The  technique developed in  \cite{Landis} also  works  for   parabolic equations with  coefficients depending only on the space-variable.
 The unique continuation property  of parabolic equations with potential was built up in \cite{F. Lin}, in which the order of the solution's vanishing at some interior point was investigated.  We also mention
 \cite{Yamabe} where some weak unique continuation property was obtained. In studies of the quantitative  unique continuation property for linear parabolic equations, the Carleman  inequality and the frequency function are two important tools (see, for instance, \cite{EscauriazaF1}).
 About the  Carleman estimates, we would like to mention works
   \cite{Chaves,  EscauriazaF2, C. Kenig,  H. Koch}.
The frequency function for elliptic equations may be traced back to \cite{Almgren}. A slightly different version of this type of function for parabolic equations was used in \cite{PW} and \cite{Phung3}, where  some   unique continuation inequalities were obtained.
We also mention \cite{Lin2, Kukavica, Phung1,  Poon, Vessella} for the related studies.
  To the best of our knowledge, there are very limited  works on the unique continuation property for nonlinear equations. We mention  \cite{C. Kenig1} and \cite{BZhang} in this direction.  In \cite{BZhang}, the author used the inverse scattering theory to obtain the unique continuation property for the Korteweg-de Vries equation.

The main theorem of this paper is as follows:

\begin{theorem}\label{theorem}
Let  $u_0\in L^\infty(\Omega)$ and let $\omega$ be a nonempty open subset of $\Omega$. Suppose that  $(u,v)$ is the solution to the system (\ref{1.1}) over $[0,T]$ for some $T>0$.
  Then     the following conclusions are true:\\
$(i)$ There are $\gamma= \gamma(\Omega, \omega, \|u_0\|_{L^\infty(\Omega)}, T)\in (0,1)$ and $D=D(\Omega, \omega, \|u_0\|_{L^\infty(\Omega)}, T)>0$
  so that
\begin{eqnarray}\label{1.4}
\int_{\Omega}(|u(x,T)|^2+|v(x,T)|^2)dx\leq D\left(\int_{\Omega}|u_0(x)|^2dx\right)^{1-\gamma}\left(\int_{\omega}|u(x,T)|^2dx\right)^\gamma.
\end{eqnarray}
$(ii)$ When $u_0\neq 0$,  there is   $C=C(\Omega,\omega, \|u_0\|_{L^\infty(\Omega)}, T)>0$ so that
\begin{eqnarray}\label{1.5}
\int_{\Omega}|u_0(x)|^2dx\leq C\exp\left(C{\|u_0(x)\|_{L^2(\Omega)}^2\over \|u_0(x)\|_{H^{-1}(\Omega)}^2}\!\right)
\!\times\! \int_{\omega}(|u(x,T)|^2)dx.
\end{eqnarray}
\end{theorem}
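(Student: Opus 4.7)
My plan is to exploit the elliptic second equation to eliminate the nonlinearity, reducing (\ref{1.1}) to a linear parabolic equation for $u$ alone with bounded coefficients, and then to invoke the frequency function machinery already available for such linear equations. Assuming the a priori $L^\infty$-bound on $(u,v)$ on $[0,T]$ (established earlier in the paper), elliptic regularity applied to $-\Delta v+av=bu$ with zero Dirichlet data yields $\nabla v\in L^\infty(\Omega\times(0,T))$. Expanding $\nabla\cdot(u\nabla v)=\nabla v\cdot\nabla u+u\Delta v$ and substituting $\Delta v=av-bu$ from the second equation, the first equation becomes
\[
\p_t u-\Delta u+B(x,t)\cdot\nabla u+c(x,t)\,u=0,\qquad u|_{\p\Omega}=0,\ u|_{t=0}=u_0,
\]
with $B=\nabla v$ and $c=av-bu$ bounded by a constant $K=K(\Omega,\|u_0\|_{L^\infty},T)$. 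The problem is thereby reduced to proving two unique continuation inequalities for a linear parabolic equation with $L^\infty$ coefficients.

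For inequality (\ref{1.4}), I would apply the frequency function method (in the spirit of \cite{PW}) to this linearized equation on a small ball $B_r\Subset\omega$, obtaining a two-point H\"older estimate
\[
\|u(T)\|_{L^2(\Omega)}\le D_0\,\|u_0\|_{L^2(\Omega)}^{1-\mu}\,\|u(T)\|_{L^2(B_r)}^{\mu},
\]
where $\mu\in(0,1)$ and $D_0>0$ depend on $\Omega,r,K,T$. A finite-chain-of-balls localization argument then propagates smallness from $B_r$ to the given open set $\omega$. The $v$-contribution on the left-hand side of (\ref{1.4}) is absorbed through the standard elliptic estimate $\|v(T)\|_{L^2(\Omega)}\le C_\Omega\|u(T)\|_{L^2(\Omega)}$, obtained by testing the $v$-equation against $v$.

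For the exponential estimate (\ref{1.5}), I would combine (\ref{1.4}) with a log-convexity argument for $t\mapsto\|u(t)\|_{L^2}^2$ along the same linearized equation (in the style of Phung, cf.\ \cite{Phung1,Phung3}). The ratio $\|u_0\|_{L^2}^2/\|u_0\|_{H^{-1}}^2$ encodes the effective top-frequency content of $u_0$ and controls the parabolic dissipation rate; the exponential weight $\exp(C\|u_0\|_{L^2}^2/\|u_0\|_{H^{-1}}^2)$ then arises from quantifying how much of $\|u_0\|_{L^2}$ can be hidden from the terminal-time observation on $\omega$ after parabolic smoothing. Concretely, one applies (\ref{1.4}) (or its infinitesimal form) at a well-chosen intermediate time and cumulates the losses using the parabolic energy identity, obtaining (\ref{1.5}) after rearrangement.

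The main obstacle, in my view, is the very first step: propagating a uniform $L^\infty$-bound on $(u,v)$ over the whole interval $[0,T]$ for this Keller-Segel-type system. Chemotactic equations are famously prone to finite-time blow-up, so securing such a bound requires a careful $L^p$-bootstrap that exploits the smoothing provided by the elliptic $v$-equation (inserting $\Delta v=av-bu$ into the $u$-equation and iterating Moser-type estimates upward to $L^\infty$). Once the linear reduction is available, the frequency function and log-convexity steps are essentially adaptations of known arguments for linear parabolic equations with bounded coefficients; handling the drift $B\cdot\nabla u$ in the frequency monotonicity computation introduces only perturbative terms controlled by the $L^\infty$ bound on $B$.
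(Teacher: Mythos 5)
Your proposal follows essentially the same route as the paper: establish the a priori $L^\infty$ bound on $(u,v)$; substitute $\Delta v = av-bu$ to recast the $u$-equation as a linear parabolic equation with bounded drift and potential; run the frequency-function argument for a two-ball H\"older interpolation estimate; propagate by a finite chain of balls; absorb the $v$-term via elliptic regularity; and obtain (\ref{1.5}) from a backward-uniqueness/log-convexity estimate for the linearized equation combined with (\ref{1.4}). This matches the paper's Sections~2--4, including the use of $\zeta(t)=\|u(t)\|_2^2/\|u(t)\|_{H^{-1}}^2$ as the quantity whose exponential growth bound yields the exponential weight in (\ref{1.5}). Two small remarks: the chain of balls must propagate the estimate \emph{from} $B_r\subset\omega$ \emph{out to cover all of $\Omega$} (not ``from $B_r$ to $\omega$''), and near $\partial\Omega$ one cannot use interior balls alone --- the paper handles this by working on star-shaped sets $\Omega\cap B_{R_g}$ with a cut-off, which your sketch does not make explicit. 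Neither affects the soundness of your plan; you are describing the same proof.
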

\begin{remark}\label{remark1.2,3-22}
{\it
Several notes on Theorem \ref{theorem}  are given in order.

\begin{itemize}
\item[(a1)] Our motivations to build up Theorem \ref{theorem} are as follows:
From the perspective of mathematics,
 most studies on the unique continuation property  focus on linear systems, while  (\ref{1.1}) is a semi-linear system.
 From the application point of view,
 one can recover the initial population state and evolution history  from the observation in a small  subset at a time, through using our unique continuation inequalities.
  \item[(a2)]  From either (\ref{1.4}) or (\ref{1.5}), we can get the following qualitative unique continuation property
for the system (\ref{1.1}): if $u(x,T)=0$ over $\omega$ or $v(x,T)=0$ over $\omega$,
then $(u,v)$ is identically zero.
  (See Corollary \ref {theorem1.3}.) Moreover, (\ref{1.5}) implies the backward uniqueness: if $u(\cdot,T)=0$
  over $\Omega$, then $u_0=0$.

\item[(a3)] It is natural to ask if (\ref{1.4}) or (\ref{1.5})
 still holds when $\int_{\omega}(|u(x,T)|^2)dx$ is replaced by $\int_{\omega}(|v(x,T)|^2)dx$ on the right hand side.  Unfortunately, we are not able to answer this question  now.

\item[(a4)] Our strategy to show Theorem \ref{theorem} is as follows:
We rewrite
 (\ref{1.1}) as:
\begin{eqnarray}\label{1.6,2-1}
\partial_t u-\triangle u+A\cdot\nabla u +Bu=0,
\end{eqnarray}
where $A=A(v)$ and $B=B(u,v)$ depend on $u$ and $v$. (We can do this because of the structure of (\ref{1.1}).) The equation \eqref{1.6,2-1}
hints us to build up  priori $L^\infty$-estimates for the solution
$(u,v)$ to the system  (\ref{1.1}) with $u_0\in L^\infty(\Omega)$, which gives $L^\infty$-estimates of
$A$ and $B$ in terms of $\|u_0\|_{L^\infty(\Omega)}$. Then we treat \eqref{1.6,2-1} as a linear parabolic equation
where $A$ and $B$ are viewed as coefficients with the above-mentioned $L^\infty$-estimates. After that, we use
 the frequency function method (see, for instance, \cite{AEWZ, Phung1,  PW,  Poon}) to prove a local  interpolation inequality (see Theorem \ref{lemma4.4}). Finally, we utilize the localization technique
 (see \cite{Phung3}) to get Theorem \ref{theorem}.

It seems to us that the above strategy might be used to build up the unique continuation inequalities
for some more general semi-linear parabolic equations.
\end{itemize}}
\end{remark}

The rest of the paper is organized as follows:
Section 2 presents the
well-posedness  and some estimates for the system (\ref{1.1}).
  Section 3 shows a local  interpolation inequality
of  the system (\ref{1.1}). Section 4 proves Theorem \ref{theorem}.


\section{Analysis of the system (\ref{1.1})}

\ \ \ \
We start with introducing notation.
  We use  $\|\cdot\|_p$ (with  $(p\in[1,+\infty])$)  to denote the norm of the space
    $L^p(\Omega)$.  We  use $B(x_0,R)$ to stand for the open ball centered at $x_{0}$ and of radius $R$, and use  $C(\ldots)$ to stand for a positive constant which depends on   what are enclosed in the brackets.

 \subsection{Preliminary lemmas}

 We introduce several estimates on a linear elliptic equation:
 \begin{equation}\label{2.1}
\begin{cases}
-\triangle v(x)+\hat{a}v(x)=\eta(x),& \textrm{ in } \Omega,\\
v(x)=0,&  \textrm{ on } \partial\Omega,\,\end{cases}
\end{equation}
 (Here, $\hat{a}$ is a positive constant and $\eta$ is a given function.)
  and on the semigroup $S_p(t)$ generated by $A_p$ on $L^p(\Omega)$, with $p\in(1,+\infty)$, where
 $$
A_p(u):=\triangle u,\;\; u\in W^{2,p}(\Omega)\cap W^{1,p}_0(\Omega).
$$
The following
two lemmas can be found in \cite{Agmon} and will be used later.
\begin{lemma}\label{lemma2.1}
 For each $p\in (1,+\infty)$ and each  $\eta\in L^{p}(\Omega)$,
 the equation (\ref{2.1}) has  a unique solution $v\in W^{2,p}(\Omega)\cap W_0^{1,p}(\Omega)$. Moreover,
 there is $C=C(\Omega,p,\hat{a})$ so that
 \begin{eqnarray*}
\|v\|_{W^{2,p}(\Omega)}\leq C\|\eta\|_{p}.
 \end{eqnarray*}
 \end{lemma}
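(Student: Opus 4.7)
The plan is to treat this as a standard consequence of the $L^p$ elliptic regularity theory of Agmon--Douglis--Nirenberg. I would split the argument into existence/uniqueness and then the $W^{2,p}$ bound.

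For existence and uniqueness, I would first handle the case $p=2$ via the Lax--Milgram theorem: the bilinear form $B(v,w)=\int_\Omega \nabla v\cdot\nabla w+\hat a\,vw\,dx$ on $H^1_0(\Omega)$ is clearly bounded and, since $\hat a>0$, coercive (by Poincar\'e's inequality this is automatic even for $\hat a=0$, and the positive $\hat a$ only strengthens coercivity). This yields a unique weak solution $v\in H^1_0(\Omega)$ whenever $\eta\in L^2(\Omega)\subset H^{-1}(\Omega)$. For general $p\in(1,\infty)$, the cleanest route is the a priori estimate approach: establish the estimate $\|v\|_{W^{2,p}}\le C\|\eta\|_p$ for smooth solutions, then pass to the limit using density of $C^\infty(\overline\Omega)\cap \eta$-approximants in $L^p(\Omega)$. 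Uniqueness for $p\le 2$ reduces to the $p=2$ case by Sobolev embedding (since $v\in W^{2,p}\cap W^{1,p}_0\hookrightarrow H^1_0$ when $p\ge 2n/(n+2)$, and for smaller $p$ one iterates), while for $p>2$ it is immediate since $L^p\hookrightarrow L^2$ on a bounded domain.

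The heart of the matter is the $W^{2,p}$ estimate. I would appeal to the interior and boundary $L^p$ estimates for strong solutions of second-order elliptic operators with smooth coefficients (here constant coefficients for the principal part), under the $C^2$ regularity of $\partial\Omega$. Specifically, since $-\Delta+\hat a$ has constant coefficients and is uniformly elliptic, one has interior estimates
\begin{eqnarray*}
\|v\|_{W^{2,p}(\Omega')}\le C\bigl(\|\eta\|_{L^p(\Omega)}+\|v\|_{L^p(\Omega)}\bigr)
\end{eqnarray*}
for $\Omega'\Subset\Omega$ via Calder\'on--Zygmund singular integral bounds applied to the Newtonian potential representation. Near $\partial\Omega$, a $C^2$-straightening of the boundary reduces the problem to a half-space, where one again applies Calder\'on--Zygmund theory to the explicit half-space Green representation. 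A finite covering of $\overline\Omega$ combines these into the global estimate $\|v\|_{W^{2,p}}\le C(\|\eta\|_p+\|v\|_p)$. Finally, the lower-order term $\|v\|_p$ is absorbed into $\|\eta\|_p$: since $\hat a>0$, multiplying the equation by $|v|^{p-2}v$, integrating by parts, and using $\int|\nabla v|^2|v|^{p-2}\ge 0$ yields $\hat a\|v\|_p^p\le \|\eta\|_p\|v\|_p^{p-1}$, hence $\|v\|_p\le \hat a^{-1}\|\eta\|_p$.

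The main obstacle is really just the invocation of the full Calder\'on--Zygmund / ADN machinery for the $W^{2,p}$ bound; everything else (existence in $L^2$, uniqueness, absorption of the lower-order term) is elementary. Since the paper explicitly cites \cite{Agmon} as containing this result, I would not reprove the Calder\'on--Zygmund estimates but rather cite them and assemble the pieces as above.
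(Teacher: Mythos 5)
Your proposal is correct and consistent with the paper's treatment: the paper gives no proof of Lemma~\ref{lemma2.1}, stating only that it can be found in Agmon--Douglis--Nirenberg, which is exactly the machinery you invoke. Your sketch fills in the standard argument behind that citation (Lax--Milgram for $p=2$, interior and boundary Calder\'on--Zygmund estimates combined via a finite covering, absorption of the lower-order term by testing the equation against $|v|^{p-2}v$, and uniqueness by bootstrap reduction to the $L^2$ theory), and each step you describe is sound.
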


\begin{lemma}\label{lemma2.a}
For each $p\in (1,+\infty)$, there is $C=C(\Omega,p)$ so that for each
  $\varphi\in L^p(\Omega)$,
 \begin{eqnarray*}
 \|S_p(t)\varphi\|_{p}\leq \|\varphi\|_{p}   \:\textrm{ and }\: \|\nabla S_p(t)\varphi\|_{p}\leq Ct^{-\frac{1}{2}}\|\varphi\|_{p},\;\;\mbox{when}\;\;t\in(0,+\infty).
 \end{eqnarray*}
 \end{lemma}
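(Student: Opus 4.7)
The two estimates are standard bounds for the Dirichlet heat semigroup on a $C^2$ domain. My strategy is to handle them separately, reducing everything to analytic semigroup theory together with the elliptic regularity already recorded in Lemma \ref{lemma2.1}.

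For the contraction $\|S_p(t)\varphi\|_p\le\|\varphi\|_p$, I would first treat the endpoints. The parabolic maximum principle with homogeneous Dirichlet data gives $\|S_\infty(t)\varphi\|_\infty\le\|\varphi\|_\infty$. For the $L^1$ endpoint, the positivity-preserving nature of the Dirichlet heat semigroup reduces matters to nonnegative $\varphi$; integrating the equation over $\Omega$ yields $\tfrac{d}{dt}\int_\Omega S_\infty(t)\varphi\,dx=\int_{\partial\Omega}\partial_\nu S_\infty(t)\varphi\,dS\le 0$ because $\partial_\nu u\le 0$ where $u\ge 0$ vanishes on $\partial\Omega$. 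Splitting $\varphi=\varphi^+-\varphi^-$ and using $|S_p(t)\varphi|\le S_p(t)|\varphi|$ then gives the $L^1$ contraction. Riesz--Thorin interpolation supplies the $L^p$ contraction for every $p\in(1,\infty)$. (The case $p=2$ can alternatively be read off the spectral decomposition $\|S_2(t)\varphi\|_2^2=\sum_k e^{-2\lambda_k t}|\hat\varphi_k|^2$.)

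For the gradient bound, I would use that $A_p$ generates a bounded analytic semigroup on $L^p(\Omega)$ for each $p\in(1,\infty)$, which rests on the $L^p$-resolvent estimate $\|(\lambda-A_p)^{-1}\|_{p\to p}\le C|\lambda|^{-1}$ in a sector. Analyticity produces $\|A_p S_p(t)\varphi\|_p\le Ct^{-1}\|\varphi\|_p$, and the standard moment inequality for fractional powers of sectorial operators then yields $\|(-A_p)^{1/2}S_p(t)\varphi\|_p\le Ct^{-1/2}\|\varphi\|_p$. The final step is to verify the embedding $D((-A_p)^{1/2})\hookrightarrow W^{1,p}_0(\Omega)$ with $\|\nabla u\|_p\le C\|(-A_p)^{1/2}u\|_p$: since Lemma \ref{lemma2.1} identifies $D(A_p)$ with $W^{2,p}(\Omega)\cap W^{1,p}_0(\Omega)$, this follows by complex interpolation, namely $[L^p(\Omega),W^{2,p}(\Omega)\cap W^{1,p}_0(\Omega)]_{1/2}\hookrightarrow W^{1,p}_0(\Omega)$. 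Composing this embedding with the fractional bound produces the claimed $Ct^{-1/2}\|\varphi\|_p$ estimate on $\|\nabla S_p(t)\varphi\|_p$.

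The main obstacle is the identification of $D((-A_p)^{1/2})$ with a subspace of $W^{1,p}_0(\Omega)$ when $p\neq 2$. For $p=2$ this is transparent from the quadratic form $\|(-A_2)^{1/2}u\|_2^2=\int_\Omega|\nabla u|^2$, but for general $p$ it is the Dirichlet square-root problem and genuinely requires Agmon-type elliptic regularity combined with complex interpolation theory. This is the reason the lemma is invoked as a citation to \cite{Agmon} rather than being derived from scratch here.
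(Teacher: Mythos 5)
The paper gives no proof of this lemma; it is stated as a standard fact and attributed to the Agmon--Douglis--Nirenberg reference (really a citation for elliptic regularity, with the parabolic semigroup estimates understood to follow by standard theory). Your argument is a correct route to a full proof. The contraction part is fine: $L^\infty$-contraction by the parabolic maximum principle, $L^1$-contraction by positivity preservation plus mass decay from $\partial_\nu u\le 0$ on $\partial\Omega$, and Riesz--Thorin in between; the displayed identity $\|S_2(t)\varphi\|_2^2=\sum_k e^{-2\lambda_k t}|\hat\varphi_k|^2$ is a clean sanity check. For the gradient bound, the chain ``analyticity $\Rightarrow$ $\|A_pS_p(t)\varphi\|_p\le Ct^{-1}\|\varphi\|_p$ $\Rightarrow$ moment inequality $\Rightarrow$ $\|(-A_p)^{1/2}S_p(t)\varphi\|_p\le Ct^{-1/2}\|\varphi\|_p$'' is correct, and you are right that the remaining nontrivial link is $\|\nabla u\|_p\lesssim\|(-A_p)^{1/2}u\|_p$. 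Be aware, though, that this does not reduce to complex interpolation of the spaces alone: the identification $D((-A_p)^{1/2})=[L^p(\Omega),D(A_p)]_{1/2}$ requires that $-A_p$ have bounded imaginary powers (Seeley's theorem for elliptic boundary value problems, or Pr\"uss--Sohr), which is a substantive input beyond Lemma \ref{lemma2.1}; you flag this as the main obstacle, which is appropriate, but the phrase ``this follows by complex interpolation'' slightly undersells what is used. An alternative, more hands-on derivation of the gradient bound that avoids the square-root problem entirely is to invoke the pointwise Gaussian gradient estimate $|\nabla_x p_t(x,y)|\le Ct^{-1/2}t^{-n/2}e^{-c|x-y|^2/t}$ for the Dirichlet heat kernel on a bounded $C^2$ domain and then apply Young's convolution inequality; that route is closer in spirit to the duality trick the paper uses in Lemma \ref{lemma2.2}.
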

 As a direct consequence of Lemma \ref{lemma2.1}, we have
\begin{corollary}\label{remark2.1}
For each $p\in (1,+\infty)$,
the resolvent  $J_{\hat{a}}:=(-\triangle+\hat{a}I)^{-1}$ (with $I$ the identity operator on $L^{p}(\Omega)$)
 is a linear bounded operator from  $L^{p}(\Omega)$ to  $W^{2,p}(\Omega)\cap W_0^{1,p}(\Omega)$. Moreover,
  for each  $\eta\in L^{p}(\Omega)$, the solution of \eqref{2.1} satisfies $v=J_{\hat{a}}(\eta)$.
\end{corollary}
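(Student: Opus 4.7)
The plan is to read the corollary as a pure unpackaging of Lemma \ref{lemma2.1}. Lemma \ref{lemma2.1} already gives, for every $\eta\in L^p(\Omega)$, a uniquely determined $v\in W^{2,p}(\Omega)\cap W_0^{1,p}(\Omega)$ solving \eqref{2.1}, together with the bound $\|v\|_{W^{2,p}(\Omega)}\le C\|\eta\|_p$. So I would first define $J_{\hat a}$ on $L^p(\Omega)$ by setting $J_{\hat a}(\eta):=v$, where $v$ is the unique solution furnished by Lemma \ref{lemma2.1}. Existence makes this assignment total; uniqueness makes it single-valued.

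Next I would verify linearity. Given $\eta_1,\eta_2\in L^p(\Omega)$ and scalars $\alpha,\beta$, the functions $v_i:=J_{\hat a}(\eta_i)$ lie in $W^{2,p}(\Omega)\cap W_0^{1,p}(\Omega)$ and solve $-\triangle v_i+\hat a v_i=\eta_i$ with zero boundary trace. By the linearity of $-\triangle+\hat a I$ and of the boundary operator, $\alpha v_1+\beta v_2\in W^{2,p}(\Omega)\cap W_0^{1,p}(\Omega)$ solves \eqref{2.1} with right-hand side $\alpha\eta_1+\beta\eta_2$; the uniqueness part of Lemma \ref{lemma2.1} then forces $J_{\hat a}(\alpha\eta_1+\beta\eta_2)=\alpha J_{\hat a}(\eta_1)+\beta J_{\hat a}(\eta_2)$. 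Boundedness from $L^p(\Omega)$ into $W^{2,p}(\Omega)\cap W_0^{1,p}(\Omega)$ is immediate from the quantitative estimate in Lemma \ref{lemma2.1}, with the same constant $C=C(\Omega,p,\hat a)$.

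Finally I would check that $J_{\hat a}$ is actually the inverse of $-\triangle+\hat a I$ on the stated spaces. On the one hand, if $\eta\in L^p(\Omega)$ and $v=J_{\hat a}(\eta)$, then by construction $-\triangle v+\hat a v=\eta$, so $(-\triangle+\hat a I)J_{\hat a}(\eta)=\eta$. On the other hand, if $w\in W^{2,p}(\Omega)\cap W_0^{1,p}(\Omega)$ and we set $\eta:=(-\triangle+\hat a I)w\in L^p(\Omega)$, then $w$ itself is a solution of \eqref{2.1} with this $\eta$, so uniqueness gives $w=J_{\hat a}(\eta)=J_{\hat a}(-\triangle+\hat a I)w$. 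The identity $v=J_{\hat a}(\eta)$ asserted in the corollary is just the definition of $J_{\hat a}$.

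There is no real obstacle here: the statement is essentially a reformulation of Lemma \ref{lemma2.1} in operator-theoretic language. The only point that deserves care is making explicit that uniqueness in Lemma \ref{lemma2.1} is what legitimizes both the well-definedness and the linearity of $J_{\hat a}$, and that the $W^{2,p}$-estimate is what delivers its boundedness.
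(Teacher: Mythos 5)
Your proof is correct and follows exactly the route the paper intends: the paper states this corollary as a direct consequence of Lemma \ref{lemma2.1} without further argument, and your write-up simply makes explicit how existence, uniqueness, and the $W^{2,p}$-estimate from that lemma deliver well-definedness, linearity, boundedness, and the identity $v=J_{\hat a}(\eta)$. Nothing is missing and nothing diverges from the paper's approach.
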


\noindent Though the divergence operator does not commute with $S_p(t)$ $(t>0)$, the next Lemma \ref{lemma2.2}
 remains true.

\begin{lemma}\label{lemma2.2}
For each $p\in (1,+\infty)$,
there is $C=C(\Omega,p)$ so that for each  $\Phi\in [C_0^\infty(\Omega)]^n$,
 \begin{eqnarray}\label{2.2a}
 \|S_p(t)\nabla \cdot\Phi\|_{p}\leq C(\Omega)t^{-\frac{1}{2}}\|\Phi\|_{[L^p(\Omega)]^n},\;\;\mbox{when}\;\;t\in(0,+\infty).
 \end{eqnarray}
\end{lemma}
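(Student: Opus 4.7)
The natural approach is duality, since the statement is essentially a dual version of the gradient estimate in Lemma \ref{lemma2.a}. The plan is to transfer the derivative from $\Phi$ onto a test function by integration by parts, and then to move the derivative across the semigroup using the self-adjointness of the Dirichlet Laplacian.

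First, I would fix $p \in (1,+\infty)$, let $p'$ be its conjugate exponent, and use the $L^p$--$L^{p'}$ duality to write
\begin{equation*}
\|S_p(t)\nabla\cdot\Phi\|_p = \sup_{\substack{\psi\in C_0^\infty(\Omega)\\ \|\psi\|_{p'}\le 1}} \left|\int_\Omega \bigl(S_p(t)\nabla\cdot\Phi\bigr)(x)\,\psi(x)\,dx\right|.
\end{equation*}
Since the Dirichlet Laplacian is symmetric on test functions, the semigroups $S_p(t)$ and $S_{p'}(t)$ are mutually adjoint, so for each such $\psi$,
\begin{equation*}
\int_\Omega \bigl(S_p(t)\nabla\cdot\Phi\bigr)\psi\,dx = \int_\Omega (\nabla\cdot\Phi)\,S_{p'}(t)\psi\,dx.
\end{equation*}

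Next, because $\Phi\in[C_0^\infty(\Omega)]^n$ vanishes near $\partial\Omega$ and $S_{p'}(t)\psi \in W^{2,p'}(\Omega)\cap W_0^{1,p'}(\Omega)$ by the regularizing property of the Dirichlet heat semigroup, I can integrate by parts without boundary terms to get
\begin{equation*}
\int_\Omega (\nabla\cdot\Phi)\,S_{p'}(t)\psi\,dx = -\int_\Omega \Phi\cdot\nabla S_{p'}(t)\psi\,dx.
\end{equation*}
Then H\"older's inequality and the gradient estimate of Lemma \ref{lemma2.a} (applied with exponent $p'$) yield
\begin{equation*}
\left|\int_\Omega \Phi\cdot\nabla S_{p'}(t)\psi\,dx\right| \le \|\Phi\|_{[L^p(\Omega)]^n}\,\|\nabla S_{p'}(t)\psi\|_{p'} \le C(\Omega,p)\,t^{-1/2}\,\|\Phi\|_{[L^p(\Omega)]^n}\,\|\psi\|_{p'}.
\end{equation*}
Taking the supremum over $\psi$ with $\|\psi\|_{p'}\le 1$ gives \eqref{2.2a}.

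The only genuinely delicate point is justifying the adjoint identity $\langle S_p(t)f,\psi\rangle = \langle f, S_{p'}(t)\psi\rangle$ for $f = \nabla\cdot\Phi \in C_0^\infty(\Omega)$ and $\psi \in C_0^\infty(\Omega)$; this is standard but should be noted, either by invoking the spectral decomposition of the Dirichlet Laplacian on $L^2$ and interpolation/consistency of $S_p(t)$ across different $L^p$ spaces, or directly by checking the identity on the $L^2$ level and using that $C_0^\infty(\Omega) \subset L^2(\Omega) \cap L^p(\Omega) \cap L^{p'}(\Omega)$. Aside from that, the argument is essentially a one-line duality reduction to Lemma \ref{lemma2.a}.
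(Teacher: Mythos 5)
Your argument is essentially the paper's proof of Lemma \ref{lemma2.2}: both proceed by $L^p$--$L^{p'}$ duality, move the semigroup to the test-function side by self-adjointness, integrate by parts to shift the divergence onto $S_{p'}(t)\phi$, and conclude with the gradient bound of Lemma \ref{lemma2.a}. The only difference is that you spell out the adjointness and the integration by parts as separate steps (and flag the consistency of $S_p$ across $L^p$ spaces), whereas the paper compresses them into a single displayed equality.
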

\begin{proof}
Let $p'\in (1,+\infty)$ satisfy $\frac{1}{p}+\frac{1}{p'}=1$. Then, it follows from Lemma \ref{lemma2.a}
that when $t\in(0,+\infty)$,
 \begin{eqnarray*}
&& \|S_p(t)\nabla \cdot\Phi\|_{p}=\sup_{\|\phi\|_{p'}=1}|\langle S_p(t)\nabla \cdot\Phi, \phi\rangle_{p,p'}|  \nonumber \\
& =&\sup_{\|\phi\|_{p'}=1}|\langle \Phi, \nabla S_{p'}(t)\phi\rangle_{[L^{p}(\Omega)]^n,[L^{p'}(\Omega)]^n}| \leq C(\Omega)t^{-\frac{1}{2}}\|\Phi\|_{[L^p(\Omega)]^n}.
 \end{eqnarray*}
   (Here, $\langle\cdot,\cdot\rangle_{p,p'}$ denotes the pair between $L^{p}(\Omega)$ and $L^{p'}(\Omega)$ and $\langle\cdot,\cdot\rangle_{[L^{p}(\Omega)]^n,[L^{p'}(\Omega)]^n}$ stands for  the pair between $[L^{p}(\Omega)]^n$ and $[L^{p'}(\Omega)]^n$.)
   This completes the proof.
\end{proof}
\begin{remark}
{\it By Lemma \ref{lemma2.2}, we can use the standard density argument
to see that  for each $p\in (1,+\infty)$ and  each $t>0$, the operator $S_p(t)\nabla\cdot$ has a unique extension
 over $[L^p(\Omega)]^n$, which will be denoted in the same manner.
  Thus,  (\ref{2.2a}) holds for all $\Phi\in [L^p(\Omega)]^n$.}
\end{remark}

We end this subsection by introducing an estimate on
 the initial-boundary value problem:
\begin{eqnarray*} \label{2.3.1}
\begin{cases}
\partial_t y(x,t)=\mathcal{A}(t)y(x,t),& \textrm{ in } \Omega\times(0,\bar{T}],\\
y(x,t)=0,&  \textrm{ on } \partial\Omega\times(0,\bar{T}],\\
y(x,0)=y_0(x),  & \textrm{ in } \Omega.
\end{cases}
\end{eqnarray*}
Here,     $\bar{T}>0$ is arbitrarily fixed and   $\mathcal{A}(t)$ is  the differential operator defined by
$$
\mathcal{A}(t)y:=\sum_{i, j = 1}^n{\partial\over\partial{x_i}}\left(a_{ij}(x,t){\partial y\over \partial{x_j}}\right)+\sum_{i=1}^nb_i(x,t){\partial y\over \partial x_i} +c(x,t)y,
$$
 with the real-valued coefficient functions $a_{ij}, b_i, c\in L^\infty(\Omega\times(0,\bar{T}])$, $(i,j=1,\ldots,n)$.
 Moreover, we assume that $\mathcal{A}(t)$ is uniformly strongly elliptic, that is, there exists $\alpha_0>0$ such that for a.e. $(x,t)\in\Omega\times(0,\bar{T}]$,
 \begin{eqnarray}\label{2.2.4}
\sum_{i,j=1}^na_{ij}(x,t)\xi_i\xi_j\geq\alpha_0|\xi|^2\;\;\mbox{for all}\;\;\xi=(\xi_1, \xi_2, \ldots, \xi_n)\in\mathbb{R}^n.
 \end{eqnarray}
\begin{lemma}\label{theorem2.1}
 Let $U(t,s)$ ($\bar{T}\geq t\geq s\geq 0$) be the evolution system generated by $\mathcal{A}(t)$ ($\bar{T}\geq t\geq 0$).
 Then there is a positive constant $\varpi:=\varpi(\Omega,n,L,\alpha_0)$,
where
\begin{eqnarray*}\label{2.10,12.20}
  L=\max\{1, \: \|b_i\|_{L^\infty(\Omega\times(0,\bar{T}))}, \: \|c\|_{L^\infty(\Omega\times(0,\bar{T}))}\:|\: i=1,\ldots,n \},
 \end{eqnarray*}
 and $\alpha_0$ is given by  \eqref{2.2.4},
so that when $1\leq p\leq q \leq+\infty$ and $y_0\in L^p(\Omega)$,
 \begin{eqnarray}\label{2.3.14}
\|U(t,s)y_0\|_{q}\leq e^{\varpi[1+(t-s)]}(t-s)^{-\frac{n}{2}(\frac{1}{p}-\frac{1}{q})}\|y_0\|_{p},
\;\;\mbox{for}\;\; \bar{T}\geq t>s\geq 0.
 \end{eqnarray}
\end{lemma}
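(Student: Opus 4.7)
The plan is to establish (\ref{2.3.14}) as a standard ultracontractive smoothing estimate for evolution systems with bounded measurable coefficients, arguing via an $L^p$ energy identity combined with Nash's inequality (Moser iteration), and completing the range by duality. Throughout I would treat the real-valued case; the constants $\varpi$ must depend only on $\Omega,n,L,\alpha_0$, which forces careful bookkeeping. First I set $y(t):=U(t,s)y_0$ and, using the evolution property together with time translation, reduce to $s=0$.

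The key step is the $L^p$ energy estimate for $p\in[2,\infty)$. Multiplying the equation $\partial_t y=\mathcal{A}(t)y$ by $|y|^{p-2}y$, integrating over $\Omega$, and using $y|_{\partial\Omega}=0$, the principal part yields
$$
-(p-1)\int_\Omega |y|^{p-2}\!\!\sum_{i,j} a_{ij}\partial_i y\,\partial_j y\,dx\leq -\tfrac{4(p-1)}{p^2}\alpha_0\!\int_\Omega |\nabla(|y|^{p/2})|^2dx.
$$
The drift contribution $\sum_i \int b_i(\partial_i y)|y|^{p-2}y\,dx$ and the zero-order contribution $\int c|y|^p dx$ are absorbed via Young's inequality, a fraction being eaten by the good Dirichlet term, the rest by an exponential Gronwall factor. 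This yields a differential inequality of the form
$$
\frac{d}{dt}\|y(t)\|_p^p+c_0\int_\Omega|\nabla(|y|^{p/2})|^2dx\leq C(L,\alpha_0)\,\|y(t)\|_p^p,
$$
with $c_0=c_0(p,\alpha_0)>0$ and $C(L,\alpha_0)$ independent of $p$.

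Next I would apply Nash's inequality $\|f\|_2^{2+4/n}\leq C(\Omega,n)\|f\|_1^{4/n}\|\nabla f\|_2^2$ to $f=|y|^{p/2}$, convert to a closed differential inequality for $\|y(t)\|_p$ in terms of $\|y(t)\|_{p/2}$, and run Moser iteration on the geometric sequence $p,2p,4p,\ldots$. A summability argument for the geometric series in the iteration exponents produces a constant independent of the number of iterations and gives, for $2\leq p\leq q\leq\infty$,
$$
\|U(t,0)y_0\|_q\leq e^{\varpi(1+t)}t^{-(n/2)(1/p-1/q)}\|y_0\|_p.
$$
To reach $1\leq p<2$, I would invoke duality: the backward adjoint $U^\ast(s,t)$ is generated by an operator of the same divergence form, with principal part still satisfying \eqref{2.2.4} and lower-order coefficients still bounded by $L$ (up to absorption of derivatives of $a_{ij},b_i$ into $c$, which however vanish in a weak formulation). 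The dual estimate in the range $[2,\infty]$ gives, by transposition, the corresponding bound for $U(t,0)$ from $L^p$ to $L^2$ with $p\in[1,2]$. Splitting $U(t,s)=U(t,(t+s)/2)\circ U((t+s)/2,s)$ and composing the two estimates finally yields \eqref{2.3.14} in the full range $1\leq p\leq q\leq\infty$.

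The main technical obstacle is uniformity of constants: ensuring that $\varpi$ depends only on $\Omega,n,L,\alpha_0$ and not on $\bar{T}$ or on the iteration index $p$. This is handled by (i) explicit time-independence of the Nash inequality constant, (ii) using the exponential factor $e^{\varpi(1+t)}$ to cap the Gronwall growth cleanly, and (iii) summing the exponent and prefactor contributions in the Moser iteration as convergent geometric series. A secondary subtlety is the justification of integration by parts when the coefficients $a_{ij},b_i,c$ are merely in $L^\infty$: this is done by first proving the estimate for smooth approximating coefficients and then passing to the limit via a standard stability argument for linear parabolic evolution systems.
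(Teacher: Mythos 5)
The paper itself provides no proof of this lemma: it is quoted verbatim from Daners' ``Heat kernel estimates for operators with boundary conditions'' (the remark following the statement even concedes that \cite{Daners} does not record the dependence of the constant and only asserts that one can track it ``following the proof''). Your proposal is therefore a reconstruction rather than an alternative, and it follows precisely the Nash/Moser ultracontractivity route that underlies Daners' argument (and, before that, Aronson and Fabes--Stroock): an $L^p$-energy identity with the good Dirichlet term $\|\nabla(|y|^{p/2})\|_2^2$, absorption of the drift and potential, Nash's inequality to close the differential inequality, Moser iteration for $L^p\to L^\infty$, and duality to reach $p<2$. This is the right skeleton and is consistent with the dependence $\varpi=\varpi(\Omega,n,L,\alpha_0)$ claimed in the lemma, since the Nash constant depends only on $\Omega,n$ and the absorption constants only on $L,\alpha_0$.

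Two places in your sketch deserve a warning, though, because they are exactly where the ``constants independent of $p$'' bookkeeping can quietly break. First, after multiplying the energy identity through by $p$, the zero-order term $\int c|y|^p$ and the Young-absorbed drift both contribute a Gronwall factor proportional to $p$, not a $p$-independent $C(L,\alpha_0)$; your displayed differential inequality hides this. This is not fatal -- one either conjugates by $e^{-Ct}$ to remove the potential and accepts a geometric cost in the iteration, or one runs the Nash argument at $p=2$ only (getting $L^1\to L^2$ directly, as in Nash's original scheme) and reaches $L^2\to L^\infty$ by duality, which sidesteps the $p\to\infty$ iteration entirely -- but as written the claim of $p$-independence is not justified. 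Second, the duality step requires care: the formal adjoint moves the drift into divergence form and would appear to generate $\sum_i\partial_i b_i$, which is not available; this is fine in the weak/bilinear-form formulation as you say, but then the adjoint $L^p$-energy estimate has the drift acting on the test factor $|\phi|^{p-2}\phi$ rather than on $\phi$, which changes the numerics (an extra factor $p-1$) and must be re-absorbed. Neither point undermines the plan, but both need to be carried out explicitly for the lemma's quantitative claim about $\varpi$ to be honest.
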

\begin{remark} {\it We quote Lemma \ref{theorem2.1} from \cite{Daners},
which doesn't give
what quantities the constants in the inequality \eqref{2.3.14}
depend on. However, following the proof in  \cite{Daners}, we can
get them.}
\end{remark}


 \subsection{An auxiliary system}
In this subsection, we
 study the following  auxiliary system of (\ref{1.1}):
\begin{equation}\label{3.2}
\begin{cases}
\partial_t u(x,t)-\triangle u(x,t)+\nabla\cdot(u(x,t)\nabla v(x,t))=0,& \textrm{ in } \Omega\times(0,+\infty),\crr\disp
-\triangle v(x,t)+av(x,t)-b\xi(x,t)=0,& \textrm{ in } \Omega\times(0,+\infty),\crr\disp
u(x,t)=0, \: v(x,t)=0,&  \textrm{ on } \partial\Omega\times(0,+\infty),\crr\disp
u(x,0)=u_0(x),    &   \textrm{ in } \Omega,
\end{cases}
\end{equation}
where  $\xi\in L^\infty(0,+\infty; L^p(\Omega))$ with $n<p<+\infty$.
\begin{proposition}\label{proposition4.1}
Given $u_0\in L^p(\Omega)$, $\xi\in L^\infty(0,+\infty; L^p(\Omega))$ (with $p\in(n,+\infty)$) and $\bar{T}>0$,
  the
 system (\ref{3.2}) has a unique solution
 $(u,v)$ over $[0,\bar{T}]$. Moreover,
$(u, v)$ belongs to $C([0,\bar{T}]; L^p(\Omega))\times L^\infty(0,\bar{T}; W^{2,p}(\Omega)\cap W_0^{1,p}(\Omega))$ and satisfies that for some  $C:=C\big(\Omega, \bar{T},\|\xi\|_{L^\infty(0,\bar{T}; L^p(\Omega))}\big)>0$,
     \begin{eqnarray}\label{3.2aa}
\|u\|_{C([0,\bar{T}]; L^p(\Omega))}
\leq C\|u_0\|_{p}.
 \end{eqnarray}
 If we further assume that $\xi\in C([0, +\infty); L^p(\Omega))$, then $(u,v)\in C([0,\bar{T}]; L^p(\Omega))\times C((0,\bar{T}]; W^{2,p}(\Omega)\cap W_0^{1,p}(\Omega))$.
\end{proposition}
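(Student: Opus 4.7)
The auxiliary system decouples at the outset: the second equation uniquely determines $v(\cdot,t)$ from $\xi(\cdot,t)$ via the resolvent $J_a=(-\triangle+aI)^{-1}$ from Corollary \ref{remark2.1}, namely $v(t)=bJ_a(\xi(t))$. By Lemma \ref{lemma2.1}, $v(t)\in W^{2,p}(\Omega)\cap W_0^{1,p}(\Omega)$ with $\|v(t)\|_{W^{2,p}(\Omega)}\leq C\|\xi(t)\|_p$. Since $p>n$, the Sobolev embedding $W^{2,p}(\Omega)\hookrightarrow W^{1,\infty}(\Omega)$ produces
$$M:=\sup_{t\in[0,\bar T]}\|\nabla v(t)\|_{\infty}\leq C(\Omega,p,a)\,\|\xi\|_{L^\infty(0,\bar T;L^p(\Omega))}.$$
Thus the problem reduces to solving the linear parabolic equation $\partial_t u-\triangle u=-\nabla\cdot(u\nabla v)$ with Dirichlet boundary condition, initial datum $u_0\in L^p(\Omega)$, and the drift vector $\nabla v$ known and uniformly bounded in $L^\infty(\Omega)$.

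\textbf{Mild formulation and contraction.} I would recast the equation via the Duhamel principle as
$$u(t)=S_p(t)u_0-\int_0^t S_p(t-s)\nabla\cdot\bigl(u(s)\nabla v(s)\bigr)\,ds,$$
where $S_p(t)\nabla\cdot$ is interpreted as the bounded extension to $[L^p(\Omega)]^n$ provided by Lemma \ref{lemma2.2} and the subsequent remark. For a small $T_0\in(0,\bar T]$, define a self-map $\mathcal{F}:C([0,T_0];L^p(\Omega))\to C([0,T_0];L^p(\Omega))$ by letting $\mathcal{F}u$ be the right-hand side above. Using Lemma \ref{lemma2.2} together with the pointwise bound $\|u(s)\nabla v(s)\|_p\leq M\|u(s)\|_p$,
$$\|(\mathcal{F}u-\mathcal{F}w)(t)\|_p\leq CM\int_0^t(t-s)^{-1/2}\|u(s)-w(s)\|_p\,ds\leq 2CM\sqrt{T_0}\,\|u-w\|_{C([0,T_0];L^p(\Omega))}.$$
Choosing $T_0$ with $2CM\sqrt{T_0}\leq 1/2$ makes $\mathcal{F}$ a strict contraction, yielding a unique mild solution on $[0,T_0]$. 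Since $T_0$ depends only on $M$, the argument iterates on $[kT_0,(k+1)T_0]$ with initial data $u(kT_0)$ and extends the solution uniquely to $[0,\bar T]$.

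\textbf{A priori bound and continuity.} Applying the mild formulation and Lemma \ref{lemma2.2} to the solution itself gives
$$\|u(t)\|_p\leq\|u_0\|_p+CM\int_0^t(t-s)^{-1/2}\|u(s)\|_p\,ds,$$
and the singular Gronwall (Henry) inequality then yields the estimate \eqref{3.2aa} with $C$ depending only on $\Omega$, $\bar T$, and $\|\xi\|_{L^\infty(0,\bar T;L^p(\Omega))}$. Finally, when $\xi\in C([0,+\infty);L^p(\Omega))$, continuity of $v$ into $W^{2,p}(\Omega)\cap W_0^{1,p}(\Omega)$ is immediate from the boundedness of $J_a$ in Corollary \ref{remark2.1}, while continuity of $u$ in $L^p(\Omega)$ is built into the mild formulation. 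The only genuinely delicate issue is making sense of the divergence term at the $L^p$ level when $u$ enjoys only $L^p$-regularity; this is exactly what Lemma \ref{lemma2.2} is designed to handle, so no further obstacle is anticipated.
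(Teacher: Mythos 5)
Your proposal is correct and follows essentially the same route as the paper: decouple the elliptic equation, use the resolvent bound and Sobolev embedding to get a uniform $L^\infty$ bound on $\nabla v$, then run a contraction in $C([0,T_0];L^p(\Omega))$ via the mild formulation and Lemma \ref{lemma2.2}, iterating in time to reach $\bar T$. The only notable deviation is your use of the singular (Henry) Gronwall inequality to pass from the mild-formulation bound to \eqref{3.2aa}; the paper instead extracts the a priori estimate algebraically from the contraction constant on each short interval (taking $\eta_1=u$, $\eta_2=0$) and patches intervals, which yields the same conclusion with constants growing geometrically in the number of subintervals. Both are sound; yours is marginally cleaner in that it gives the global bound in one stroke without tracking constants across subintervals.
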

\begin{proof}
Arbitrarily fix $p\in(n,+\infty)$, $u_0\in L^p(\Omega)$, $\xi\in L^\infty(0,+\infty; L^p(\Omega))$
and  $\bar{T}>0$. Two observations are given in order.
First, according to Corollary \ref{remark2.1},
 \begin{eqnarray}\label{2.8}
  v(t)=J_{a}\big(b\xi(t)\big),\:\: t\in(0,\bar{T}],
 \end{eqnarray}
and  $J_{a}\in \mathcal{L}(L^{p}(\Omega); W^{2,p}(\Omega)\cap W_0^{1,p}(\Omega))$,
where $v$ is the unique solution to the second equation of (\ref{3.2}).
 From these and the fact that
 $\xi\in L^\infty(0,+\infty; L^p(\Omega))$, it follows from the standard method in \cite{Yosida} that
 $v\in L^\infty(0,\bar{T}; W^{2,p}(\Omega)\cap W_0^{1,p}(\Omega))$. (See  Section V of \cite{Yosida},  p.134.)
Then by  the Sobolev embedding theorem and Lemma \ref{lemma2.1}, we have
 \begin{eqnarray}\label{3.3}
  &&\|v\|_{ L^\infty(0,\bar{T}; L^\infty(\Omega))}+\|\nabla v\|_{ L^\infty(0,\bar{T}; [L^\infty(\Omega)]^n)}\crr\disp
  &\leq& C(\Omega) \|v\|_{ L^\infty(0,\bar{T}; W^{2,p}(\Omega)\cap W_0^{1,p}(\Omega))}\leq C(\Omega)\|\xi\|_{L^\infty(0,\bar{T}; L^p(\Omega))}.
 \end{eqnarray}

 Second, we arbitrarily fix
   $\eta\in C([0,\bar{T}]; L^p(\Omega))$.  Then it follows from Lemma \ref{lemma2.a} and  Lemma \ref{lemma2.2} that
    for each $t\in[0,\bar{T}]$,
 \begin{eqnarray}\label{2.10,2-11}
&&\|S_p(t)u_0-\int_0^t S_p(t-s)\nabla\cdot(\eta \nabla v)ds\|_{p}\crr\disp
&\leq &\|u_0\|_{p}+C(\Omega) \int_0^t(t-s)^{-\frac{1}{2}}\|\eta\nabla v\|_{[L^p(\Omega)]^n}ds\crr\disp
&\leq& \|u_0\|_{p}+C(\Omega) \bar{T}^{\frac{1}{2}}\|\eta\|_{C([0,\bar{T}]; L^p(\Omega))}\|\nabla v\|_{L^\infty(0,\bar{T};[L^\infty(\Omega)]^n)}.
 \end{eqnarray}
Now by \eqref{2.10,2-11} and  (\ref{3.3}), we see that for each $t\in[0,\bar{T}]$,
 \begin{eqnarray*}
\|S_p(t)u_0-\int_0^t S_p(t-s)\nabla\cdot(\eta \nabla v)ds\|_{p}
\leq \|u_0\|_{p}+C(\Omega) \bar{T}^{\frac{1}{2}}\|\eta\|_{C([0,\bar{T}]; L^p(\Omega))}\|\xi\|_{L^\infty(0,\bar{T}; L^p(\Omega))}.
 \end{eqnarray*}
  This yields $F(\cdot;\eta)\in C([0,\bar{T}]; L^p(\Omega))$, where
  \begin{eqnarray*}
  F(t;\eta):=S_p(t)u_0-\int_0^t S_p(t-s)\nabla\cdot(\eta\nabla v)ds,\;\;t\in [0,\bar{T}].
  \end{eqnarray*}
  Thus, we can define a map $\Lambda: C([0,\bar{T}]; L^p(\Omega))\mapsto C([0,\bar{T}]; L^p(\Omega))$
  in the following manner: for each $\eta\in C([0,\bar{T}]; L^p(\Omega))$, set
  \begin{eqnarray*}
   \Lambda(\eta)(t):=F(t;\eta),\; t\in [0,\bar{T}].
  \end{eqnarray*}

  We are going to finish the proof with the aid of the contraction mapping theorem.
  To this end, we
   see from  Lemma \ref{lemma2.2} that when  $\eta_1, \eta_2\in C([0,\bar{T}]; L^p(\Omega))$,
  \begin{eqnarray*}
\|\Lambda(\eta_1)-\Lambda(\eta_2)\|_{C([0,\bar{T}]; L^p(\Omega))}&\leq&\int_0^t \|S_p(t-s)\nabla\cdot((\eta_1-\eta_2)\nabla v)\|ds\crr\disp
&\leq&\int_0^t C(\Omega)(t-s)^{-\frac{1}{2}}\|(\eta_1-\eta_2)\nabla v\|_{[L^p(\Omega)]^n}ds.
 \end{eqnarray*}
  This, along with (\ref{3.3}), implies
  \begin{eqnarray}\label{3.4aa}
\|\Lambda(\eta_1)-\Lambda(\eta_2)\|_{C([0,\bar{T}]; L^p(\Omega))}
\leq C(\Omega) \bar{T}^{\frac{1}{2}}\|\xi\|_{L^\infty(0,\bar{T}; L^p(\Omega))}\|\eta_1-\eta_2\|_{C([0,\bar{T}]; L^p(\Omega))}.
 \end{eqnarray}

  We first consider the case that
  \begin{eqnarray}\label{2.12-2-11}
  C(\Omega) \bar{T}^{\frac{1}{2}}\|\xi\|_{L^\infty(0,\bar{T}; L^p(\Omega))}<1.
  \end{eqnarray}
  From \eqref{3.4aa} and \eqref{2.12-2-11}, we see that
  $\Lambda$ is a strict contraction map. Thus it  has a unique fixed point $u\in C([0,\bar{T}]; L^p(\Omega))$, i.e.,
   \begin{eqnarray*}
u(t)=S_p(t)u_0-\int_0^t S_p(t-s)\nabla\cdot(u\nabla v)ds,\;\;t\in [0,\bar{T}].
 \end{eqnarray*}
  Consequently, $u\in C([0,\bar{T}]; L^p(\Omega))$ is the unique solution to the first equation of (\ref{3.2}) (corresponding to the above $v$) over $[0,\bar{T}]$. Hence,
  $(u,v)\in C([0,\bar{T}]; L^p(\Omega))\times L^\infty(0,\bar{T}; W^{2,p}(\Omega)\cap W_0^{1,p}(\Omega))$
 is the unique solution to (\ref{3.2}) over $[0,\bar{T}]$ in the case \eqref{2.12-2-11}.
    Meanwhile, one can easily  check that  $\Lambda(0)=S_p(t)u_0\in C([0,\bar{T}]; L^p(\Omega))$.
 By these, we can take  $\eta_1=u$ and $\eta_2=0$ in (\ref{3.4aa}) to get
 \begin{eqnarray*}
&&\|u\|_{C([0,\bar{T}]; L^p(\Omega))}=\|\Lambda(u)\|_{C([0,\bar{T}]; L^p(\Omega))} \nonumber \\
&\leq& \|S_p(t)u_0\|_{C([0,\bar{T}]; L^p(\Omega))}+\|\Lambda(u)-\Lambda(0)\|_{C([0,\bar{T}]; L^p(\Omega))}  \nonumber \\
&\leq&  \|u_0\|_{p}+ C(\Omega) \bar{T}^{\frac{1}{2}}\|\xi\|_{L^\infty(0,\bar{T}; L^p(\Omega))}\|u\|_{C([0,\bar{T}]; L^p(\Omega))},
 \end{eqnarray*}
which leads to \eqref{3.2aa} for the case  \eqref{2.12-2-11}.

We next consider the case that
\begin{eqnarray}\label{2.13,2-11}
C(\Omega) \bar{T}^{\frac{1}{2}}\|\xi\|_{L^\infty(0,\bar{T}; L^p(\Omega))}\geq1.
\end{eqnarray}
By using a  standard iteration argument (by choosing another $\bar{T}_1>0$ with $C(\Omega) \bar{T}_1^{\frac{1}{2}}\|\xi\|_{L^\infty(0,\bar{T}; L^p(\Omega))}<1$) and the results obtained in the case
\eqref{2.12-2-11}, we can get the desired results for the case \eqref{2.13,2-11}.

Finally, we assume that $\xi\in C([0, +\infty); L^p(\Omega))$. Then
it follows from \eqref{2.8}, the linearity and boundedness of  $J_{a}$ that
  \begin{eqnarray*}
 && \|v(t_1)-v(t_2)\|_{W^{2,p}(\Omega)}= \|bJ_{a}\big(\xi(t_1)-\xi(t_2)\big)\|_{W^{2,p}(\Omega)}\crr\disp
  &\leq &b\|J_{a}\|\cdot\|\xi(t_1)-\xi(t_2)\|_{p},\;\;\;\;\mbox{when}\;\;t_1, t_2\in(0,+\infty).
 \end{eqnarray*}
This, along with the continuity of $\xi$, yields that $(u,v)\in C([0,\bar{T}]; L^p(\Omega))\times C((0,\bar{T}]; W^{2,p}(\Omega)\cap W_0^{1,p}(\Omega))$.

Hence, we  finish the proof.
\end{proof}

\subsection{Well-posedness and estimates on  the system (\ref{1.1})}
In this subsection, we  first build up the well-posedness for the system (\ref{1.1}), then we present
  some  estimates for the solution to the system (\ref{1.1}).
\begin{theorem}\label{theorem4.2}
Let $n<p<+\infty$ and  $u_0\in L^p(\Omega)$. Then  the following conclusions are true:\\
$(i)$  The system (\ref{1.1}) has a unique solution  $(u, v)\in C([0,T^*]; L^p(\Omega))\times C((0,T^*]; W^{2,p}(\Omega)\cap W_0^{1,p}(\Omega))$ for some
positive constant $T^*:=T^*(\|u_0\|_p, \Omega)$.\\
$(ii)$ The above solution satisfies $(t^{\frac{n}{2p}}u,v)\in [L^\infty(0,T^*; L^\infty(\Omega))]^2$.\\
 $(iii)$ If we further assume that $u_0\in L^\infty(\Omega)$, then $(u,v)\in[L^\infty(0,T^*; L^\infty(\Omega))]^2$.
\end{theorem}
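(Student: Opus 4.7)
The plan is to reduce the coupled system (\ref{1.1}) to a single integral equation in $u$, run a Banach fixed-point argument for local existence, and then bootstrap to $L^\infty$ via ultracontractive smoothing of the heat semigroup.

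For part $(i)$, I would eliminate $v$ using Corollary \ref{remark2.1}: the second line of (\ref{1.1}) forces $v(t)=J_a(bu(t))$, so (\ref{1.1}) is equivalent to the mild equation
\begin{equation*}
u(t)=S_p(t)u_0-\int_0^t S_p(t-s)\nabla\!\cdot\!\bigl(u(s)\,\nabla J_a(bu(s))\bigr)\,ds.
\end{equation*}
I would work on the closed ball $B_R=\{u\in C([0,T^*];L^p(\Omega)):\|u\|_{C([0,T^*];L^p)}\le R\}$ with $R=2\|u_0\|_p$. The singular kernel $(t-s)^{-1/2}$ comes from Lemma \ref{lemma2.2}, the bound $\|\nabla J_a(bu)\|_\infty\le C\|u\|_p$ comes from Lemma \ref{lemma2.1} combined with the Sobolev embedding $W^{2,p}\hookrightarrow W^{1,\infty}$ (valid since $p>n$), and the bilinear structure of $u\mapsto u\,\nabla J_a(bu)$ yields a Lipschitz estimate of the form $\|F(u_1)-F(u_2)\|_{C([0,T^*];L^p)}\le CT^{*1/2}(\|u_1\|_{C}+\|u_2\|_{C})\|u_1-u_2\|_{C}$, together with a quadratic self-mapping bound. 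Choosing $T^*=T^*(\Omega,\|u_0\|_p)$ sufficiently small makes $F$ a strict contraction on $B_R$; its fixed point $u$ paired with $v=J_a(bu)$ gives the solution, and the regularity $v\in C((0,T^*];W^{2,p}\cap W^{1,p}_0)$ is then exactly the last conclusion of Proposition \ref{proposition4.1} applied with $\xi=u\in C([0,T^*];L^p)$.

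For part $(ii)$, the key ingredient is an ultracontractive estimate for $S_p(t)$. Specializing Lemma \ref{theorem2.1} to $a_{ij}=\delta_{ij}$, $b_i\equiv0$, $c\equiv0$ yields $\|S_p(t)\varphi\|_\infty\le Ct^{-n/(2p)}\|\varphi\|_p$ on the window $[0,T^*]$. Combining this with the semigroup factorization $S_p(t)\nabla\!\cdot\!=S_p(t/2)\,[S_p(t/2)\nabla\!\cdot\!]$ and Lemma \ref{lemma2.2} gives the divergence-smoothing estimate
\begin{equation*}
\|S_p(t)\nabla\!\cdot\!\Phi\|_\infty\le Ct^{-\frac{1}{2}-\frac{n}{2p}}\|\Phi\|_{[L^p(\Omega)]^n}.
\end{equation*}
Substituting both bounds into the Duhamel formula and using $\|u\,\nabla v\|_{[L^p]^n}\le\|u\|_p\|\nabla v\|_\infty\le C\|u\|_p^2\le CR^2$ from part $(i)$, I obtain $\|u(t)\|_\infty\le Ct^{-n/(2p)}\|u_0\|_p+CR^2\int_0^t(t-s)^{-1/2-n/(2p)}ds$, where the integral is finite because $p>n$ forces $\tfrac{1}{2}-\tfrac{n}{2p}>0$. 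Multiplying by $t^{n/(2p)}$ produces the claimed weighted $L^\infty$-bound on $u$, while $v\in L^\infty(0,T^*;L^\infty)$ is immediate from the embedding $W^{2,p}\hookrightarrow L^\infty$ and $\|v(t)\|_{W^{2,p}}\le C\|u(t)\|_p\le CR$ via Lemma \ref{lemma2.1}. For part $(iii)$, the only change when $u_0\in L^\infty$ is that the free term is now controlled by $\|S_p(t)u_0\|_\infty\le\|u_0\|_\infty$ (the maximum principle gives $L^\infty$-contractivity), so no weight is needed; the Duhamel term is handled verbatim by the same divergence-smoothing estimate, yielding $\|u\|_{L^\infty(0,T^*;L^\infty)}\le\|u_0\|_\infty+CR^2T^{*\,1/2-n/(2p)}$.

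The main technical obstacle is the derivation of the divergence-smoothing bound $\|S_p(t)\nabla\!\cdot\!\Phi\|_\infty\le Ct^{-1/2-n/(2p)}\|\Phi\|_p$, which is not stated in Section 2 and must be assembled from Lemmas \ref{lemma2.2} and \ref{theorem2.1}; one must check that the constant depends only on $\Omega$ and $p$, so that $T^*$ indeed depends only on $\Omega$ and $\|u_0\|_p$ as claimed. A secondary point is that when $u_0\in L^p(\Omega)$ only, continuity of $v$ up to $t=0$ in $W^{2,p}$ should not be expected, which is consistent with the statement $v\in C((0,T^*];W^{2,p}\cap W^{1,p}_0)$ and requires invoking the final clause of Proposition \ref{proposition4.1} after verifying $u\in C([0,T^*];L^p)$.
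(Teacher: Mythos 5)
Your proposal is essentially correct but reaches the same conclusions by a genuinely different mechanism, so a comparison is worth recording.

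For part $(i)$, you collapse the coupled problem into a single mild integral equation for $u$ (eliminating $v$ through $v=J_a(bu)$) and run one Banach fixed-point argument on the ball $B_R$ with $R=2\|u_0\|_p$. The paper instead proceeds in two layers: it first solves the auxiliary system \eqref{3.2} with $\xi$ frozen (Proposition \ref{proposition4.1}, itself a contraction argument) and then closes the loop with a second fixed point for the map $\xi\mapsto u^\xi$ (Theorem \ref{theorem4.2}, Step 1). Your single-layer route is cleaner and yields the same $T^*=T^*(\Omega,\|u_0\|_p)$. One thing you should not skip: a contraction on the ball $B_R$ gives uniqueness only among solutions with $\|u\|_{C([0,T^*];L^p)}\le R$. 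The paper devotes Step 2 to a separate continuation/Gronwall argument to get uniqueness in all of $C([0,T^*];L^p)$; you need to add the analogous step, since a putative second solution has no a priori reason to live in $B_R$. This is a routine fix, but it is a genuine omission as written. You also remark that $v$ should not be continuous up to $t=0$; in fact $v(t)=J_a(bu(t))$ with $J_a$ bounded $L^p\to W^{2,p}$ and $u\in C([0,T^*];L^p)$ forces $v\in C([0,T^*];W^{2,p}\cap W^{1,p}_0)$, so the stated $C((0,T^*];\cdot)$ is merely conservative, not forced.

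For parts $(ii)$ and $(iii)$, your route and the paper's diverge more sharply. The paper first rewrites $\nabla\cdot(u\nabla v)=\nabla v\cdot\nabla u+u\,\triangle v=\nabla v\cdot\nabla u+avu-bu^2$ using the elliptic equation, absorbs the drift and zeroth-order terms into a time-dependent operator $\mathcal{A}_1(t)=\triangle-\nabla v\cdot\nabla-av$ whose coefficients are controlled by \eqref{3.3}, and then applies Lemma \ref{theorem2.1} (Daners) to the evolution system $U_1(t,s)$ in the Duhamel representation $u(t)=U_1(t,0)u_0+b\int_0^t U_1(t,s)u^2\,ds$, with integrand singularity $(t-s)^{-n/p}$. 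You keep the bare heat semigroup $S_p(t)$ and retain the divergence-form nonlinearity, and instead manufacture the composite estimate
\begin{equation*}
\|S_p(t)\nabla\!\cdot\!\Phi\|_\infty\le C\,t^{-\frac12-\frac{n}{2p}}\|\Phi\|_{[L^p(\Omega)]^n}
\end{equation*}
by factoring $S_p(t)=S_p(t/2)S_p(t/2)$, applying Lemma \ref{lemma2.2} ($L^p\to L^p$ with $t^{-1/2}$) to the inner factor and the specialization of Lemma \ref{theorem2.1} to the Dirichlet Laplacian ($L^p\to L^\infty$ with $t^{-n/(2p)}$) to the outer factor. Both integrals converge precisely because $p>n$: the paper needs $n/p<1$, you need $\tfrac12+\tfrac{n}{2p}<1$. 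Both routes are sound; what the paper's reformulation buys is that the nonlinearity becomes the pointwise term $bu^2$ and no new semigroup estimate is needed, while your route avoids introducing the time-dependent evolution system and leans only on the autonomous heat semigroup, at the price of the extra $L^p\to L^\infty$ divergence-smoothing lemma. Your observation that the constants in this composite bound depend only on $\Omega$, $p$, $n$ (so that $T^*$ depends only on $\Omega,\|u_0\|_p$) is the correct thing to check and it does hold.
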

\begin{proof}
Arbitrarily fix $n<p<+\infty$ and  $u_0\in L^p(\Omega)$. We organize the proof in several steps.

\noindent{\it Step 1. We prove
that for some $T^*:=T^*(\|u_0\|_p, \Omega)>0$, the system (\ref{1.1}) has a  solution  $(u, v)\in C([0,T^*]; L^p(\Omega))\times C((0,T^*]; W^{2,p}(\Omega)\cap W_0^{1,p}(\Omega))$ satisfying
\begin{eqnarray}\label{3.6aa}
\|u\|_{C([0,T^*]; L^p(\Omega))}\leq C(\|u_0\|_{p}, T^*, \Omega).
 \end{eqnarray}
}

We will use the contraction mapping theorem to prove it.
To this end, we first set up the following framework: Let  $T^*>0$, which will be determined later.
Let
\begin{eqnarray*}
K:=\{\xi\in C([0,T^*]; L^p(\Omega)) \:\:   |  \:\:\:\|\xi\|_{C([0,T^*]; L^p(\Omega))}\leq 2\|u_0\|_{p}\}.
\end{eqnarray*}
According to Proposition \ref{proposition4.1}, for each $\xi\in K$,
the system (\ref{3.2}) (with the above $u_0$ and the zero extension of $\xi$ over $(T^*,+\infty)$)
has a unique solution $(u^\xi,v^\xi)\in C([0,T^*]; L^p(\Omega))\times C((0,T^*]; W^{2,p}(\Omega)\cap W_0^{1,p}(\Omega))$.
We define a map $\Psi$ from $K$ to $C([0,T^*]; L^p(\Omega))$ by
setting
\begin{eqnarray*}
\Psi(\xi):=u^\xi\;\;\mbox{for each}\;\; \xi\in K.
\end{eqnarray*}
We claim that for some $T^*>0$, $\Psi$ has a unique fixed point $u$ in $K$.
When this is done, we have
\begin{eqnarray*}
u(t)=S_p(t)u_0-\int_0^t S_p(t-s)\nabla\cdot(u\nabla v)ds,\; t\in[0,T^*]
 \end{eqnarray*}
 and
  \begin{eqnarray*}
\begin{cases}
-\triangle v+av-bu=0,& \textrm{ in } \Omega\times(0,T^*],\\
 v=0,&  \textrm{ on } \partial\Omega\times(0,T^*].
\end{cases}
\end{eqnarray*}
From these, we see that $(u,v):=(u,v^u)\in C([0,T^*]; L^p(\Omega))\times C((0,T^*]; W^{2,p}(\Omega)\cap W_0^{1,p}(\Omega))$ is a solution to the system (\ref{1.1}).

We now check conditions ensuring the contraction mapping theorem.
First, it is clear that  $K$ is a closed subset of  $C([0,T^*]; L^p(\Omega))$.
Second, we claim that $\Psi(K)\subseteq K$ for some $T^*>0$. Indeed, it  follows from Lemma \ref{lemma2.a} and
Lemma \ref{lemma2.2}  that
\begin{eqnarray*}
&&\|u^\xi(t)\|_{p}\leq\|S_p(t)u_0\|_{p}+\|\int_0^t S_p(t-s)\nabla\cdot(u^\xi\nabla v^\xi)ds\|_{p}\crr\disp
&\leq &\|u_0\|_{p}+ C(\Omega)t^{\frac{1}{2}}\|\nabla v^\xi\|_{L^\infty(0,T^*; [L^\infty(\Omega)]^n)}\|u^\xi\|_{C([0,T^*]; L^p(\Omega))}
\;\;\mbox{for each}\;\;t\in[0, T^*].
 \end{eqnarray*}
This, together with (\ref{3.3}) and the fact $\xi\in K$, yields
 \begin{eqnarray*}
&&\|u^\xi(t)\|_{p}\leq \|u_0\|_{p}+ C(\Omega)t^{\frac{1}{2}}\|\xi\|_{C([0,T^*]; L^p(\Omega))}\|u^\xi\|_{C([0,T^*]; L^p(\Omega))}\crr\disp
&\leq &\|u_0\|_{p}+ C(\Omega)T^{*\frac{1}{2}}\|u_0\|_{p}\|u^\xi\|_{C([0,T^*]; L^p(\Omega))}
\;\;\mbox{for each}\;\;t\in[0, T^*],
 \end{eqnarray*}
 which leads to
  \begin{eqnarray}\label{3.5,12.25}
\|u^\xi\|_{C([0,T^*]; L^p(\Omega))}\leq \frac{1}{1-C(\Omega)T^{*\frac{1}{2}}\|u_0\|_{p}}\|u_0\|_{p}.
 \end{eqnarray}
By choosing $T^*:=T^*(\|u_0\|_p,\Omega)$  so that
  \begin{eqnarray}\label{3.5aa}
\frac{1}{1-C(\Omega)T^{*\frac{1}{2}}\|u_0\|_{p}}\leq2,
 \end{eqnarray}
 we get from \eqref{3.5,12.25} that $\Psi(\xi)\in K$. Therefore, $\Psi(K)\subseteq K$.
Third, we claim that $\Psi$ is a strict contraction for some $T^*>0$ satisfying \eqref{3.5aa}. In fact, given $\xi_1,\xi_2\in K$,
 write $u_i=\Psi(\xi_i)$ and $v_i=v^{\xi_i}$ ($i=1,2$). Let
 $w:=u_1-u_2$.
 Then one can easily check that
   $w$ belongs to $C([0,T^*]; L^p(\Omega))$ and satisfies
 \begin{eqnarray*}
\begin{cases}
\partial_t w-\triangle w+\nabla\cdot(w\nabla v_1)+\nabla\cdot(u_2\nabla(v_1-v_2))=0,& \textrm{ in } \Omega\times(0,T^*],\\
w=0, &  \textrm{ on } \partial\Omega\times(0,T^*],\\
w(x,0)=0, & \textrm{ in } \Omega.
\end{cases}
\end{eqnarray*}
Thus, we have
 \begin{eqnarray*}
w(t)=-\int_0^tS_p(t-s)\nabla\cdot[ w\nabla v_1+u_2\nabla(v_1-v_2)]ds,\;\;t\in [0,T^*].
 \end{eqnarray*}
This, along with   Lemma \ref{lemma2.2},
yields that when  $t\in[0,T^*]$,
 \begin{eqnarray*}
&&\|w(t)\|_{p}\leq C(\Omega)\int_0^t(t-s)^{-\frac{1}{2}}\big(\|\nabla v_1\|_{[L^\infty(\Omega)]^n}\|w\|_{p}+\|\nabla(v_1-v_2)\|_{[L^\infty(\Omega)]^n}\|u_2\|_{p}\big)ds\crr\disp
&\leq& \!\!\!\! C(\Omega)T^{*\frac{1}{2}}\big(\|\nabla v_1\|_{L^\infty(0,T^*; [L^\infty(\Omega)]^n)}\|w\|_{C([0,T^*]; L^p(\Omega))}\!+\!\|\nabla(v_1-v_2)\|_{L^\infty(0,T^*;[L^\infty(\Omega)]^n)}\|u_2\|_{C([0,T^*]; L^p(\Omega))}\big).
 \end{eqnarray*}
 The above inequality, together with (\ref{3.3})  and the fact that $\xi_1, u_2\in K$, indicates that when  $t\in[0,T^*]$,
  \begin{eqnarray*}
  \|w(t)\|_{p}\leq  C(\Omega)T^{*\frac{1}{2}}\|u_0\|_{p}\big(\|w\|_{C([0,T^*]; L^p(\Omega))}+\|\xi_1-\xi_2\|_{C([0,T^*]; L^p(\Omega))}\big),
 \end{eqnarray*}
from which it follows that
  \begin{eqnarray}\label{3.6bb}
\|\Psi(\xi_1)-\Psi(\xi_2)\|_{C([0,T^*]; L^p(\Omega))}\leq \frac{C(\Omega)T^{*\frac{1}{2}}\|u_0\|_{p}}{1-C(\Omega)T^{*\frac{1}{2}}\|u_0\|_{p}}\|\xi_1-\xi_2\|_{C([0,T^*]; L^p(\Omega))}.
 \end{eqnarray}
Choosing $T^*:=T^*(\|u_0\|_p,\Omega)$ satisfying \eqref{3.5aa} and
  \begin{eqnarray*}
  \frac{C(\Omega)T^{*\frac{1}{2}}\|u_0\|_{p}}{1-C(\Omega)T^{*\frac{1}{2}}\|u_0\|_{p}}<1,
 \end{eqnarray*}
we see from  \eqref{3.6bb} that
 $\Psi$ is a strict contraction map from $K$ to $K$.
 Thus, according to the  contraction mapping theorem, $\Psi$ has a unique fixed point $u$ in $K$.

 Finally, we show \eqref{3.6aa}.
 Indeed,  one can easily  check that  $\Psi(0)=S_p(t)u_0\in K$. Then
 by  taking $\xi_1=u$ and $\xi_2=0$ in (\ref{3.6bb}), we have
  \begin{eqnarray*}
&&\|u\|_{C([0,T^*]; L^p(\Omega))}=\|\Psi(u)\|_{C([0,T^*]; L^p(\Omega))} \nonumber \\
&\leq& \|\Psi(0)\|_{C([0,T^*]; L^p(\Omega))}+\|\Psi(u)-\Psi(0)\|_{C([0,T^*]; L^p(\Omega))}  \nonumber \\
&\leq&  \|u_0\|_{p}+ \frac{C(\Omega)T^{*\frac{1}{2}}\|u_0\|_{p}}{1-C(\Omega)T^{*\frac{1}{2}}\|u_0\|_{p}}\|u\|_{C([0,T^*]; L^p(\Omega))},
 \end{eqnarray*}
which leads to \eqref{3.6aa}.

\noindent{\it Step 2. We
show that the solution of  the system (\ref{1.1}) is unique.}

By contradiction, we suppose that it is not true. Then
we could find another  solution $(\tilde{u}, \tilde{v})\in$ $C([0,T^*]; L^p(\Omega))\times C((0,T^*]; W^{2,p}(\Omega)\cap W_0^{1,p}(\Omega))$ to the system  (\ref{1.1}) (with the initial datum $u_0$),
 differing from the solution $(u,v)$ obtained in Step 1.
Write
  \begin{eqnarray}\label{3.10bb}
\varrho:=\max \{\|u\|_{C([0,T^*]; L^p(\Omega))}, \|\tilde{u}\|_{C([0,T^*]; L^p(\Omega))} \},
 \end{eqnarray}
and
  \begin{eqnarray}\label{3.11aa}
t_0:=\inf\{t\in[0,T^*]\:|\: u(t)\neq\tilde{u}(t)\}.
 \end{eqnarray}
It is obvious that $0\leq t_0<T^*$.
By the continuity  of $u$ and $\tilde{u}$, we have $u(t_0)=\tilde{u}(t_0)$. Now, we have
  \begin{eqnarray*}
u(t)-\tilde{u}(t)&=&[S_p(t-t_0)u(t_0)-\int_{t_0}^t S_p(t-s)\nabla\cdot(u\nabla v)ds]\\
&-&[S_p(t-t_0)\tilde{u}(t_0)-\int_{t_0}^t S_p(t-s)\nabla\cdot(\tilde{u}\nabla \tilde{v})ds],\;\;t\in(t_0, T^*].
 \end{eqnarray*}
After simple computations, we see
  \begin{eqnarray*}
&&\|u(t)-\tilde{u}(t)\|_p=\|\int_{t_0}^t S_p(t-s)\nabla\cdot(u\nabla v)ds-\int_{t_0}^t S_p(t-s)\nabla\cdot(\tilde{u}\nabla \tilde{v})ds\|_p\\
 &\leq&\int_{t_0}^t \|S_p(t-s)\nabla\cdot((u-\tilde{u})\nabla v)+S_p(t-s)\nabla\cdot(\tilde{u}\nabla (v-\tilde{v}))\|_pds,\;\;t\in(t_0, T^*].
 \end{eqnarray*}
This, along with   Lemma \ref{lemma2.2}, yields that for each $t\in(t_0, T^*]$,
  \begin{eqnarray}\label{2.17}
&&\|u(t)-\tilde{u}(t)\|_p\leq C(\Omega)\int_{t_0}^t (t-s)^{-\frac{1}{2}}(\|(u-\tilde{u})\nabla v\|_p+ \|\tilde{u}\nabla (v-\tilde{v})\|_p)ds \crr\disp
&\leq& C(\Omega)\int_{t_0}^t(t-s)^{-\frac{1}{2}}\big(\|\nabla v\|_{[L^\infty(\Omega)]^n}\|u-\tilde{u}\|_{p}+\|\nabla(v-\tilde{v})\|_{[L^\infty(\Omega)]^n}
\|\tilde{u}\|_{p}\big)ds.
 \end{eqnarray}
By  the second equation of the system \eqref{1.1}, Lemma \ref{lemma2.1}, and the Sobolev embedding theorem, we have that for each $s\in(0,T^*]$,
   \begin{eqnarray}\label{2.18}
\|\nabla v(s)\|_{[L^\infty(\Omega)]^n}\leq C(\Omega)\|u(s)\|_{p} \;\; \textrm{  and  }\;\;
 \|\nabla(v-\tilde{v})(s)\|_{[L^\infty(\Omega)]^n}\leq C(\Omega)\|(u-\tilde{u})(s)\|_{p}.
 \end{eqnarray}
 This, along with \eqref{2.17} and (\ref{3.10bb}), implies
 \begin{eqnarray*}
\|u(t)-\tilde{u}(t)\|_p&\leq &C(\Omega)\int_{t_0}^t(t-s)^{-\frac{1}{2}}\big(\|u\|_{p}\|u-\tilde{u}\|_{p}+\|u-\tilde{u}\|_{p}
\|\tilde{u}\|_{p}\big)ds\crr\disp
 &\leq& 4\varrho C(\Omega)(t-t_0)^{\frac{1}{2}}\|u-\tilde{u}\|_{C([t_0,t];L^p(\Omega))},\;\;t\in(t_0,T^*],
 \end{eqnarray*}
which leads to
\begin{eqnarray*}
\|u-\tilde{u}\|_{C([t_0,t];L^p(\Omega))}\leq4\varrho C(\Omega)(t-t_0)^{\frac{1}{2}}\|u-\tilde{u}\|_{C([t_0,t];L^p(\Omega))}, \;\; t\in(t_0,T^*].
\end{eqnarray*}
Let $\epsilon>0$ satisfy that
 $4\varrho C(\Omega)\epsilon^{\frac{1}{2}}<1$ and $t_0+\epsilon\leq T^*$. Then the above inequality  indicates that
  \begin{eqnarray*}
u(s)=\tilde{u}(s), \textrm{ when } s\in(t_0,t_0+\epsilon),
\end{eqnarray*}
which leads to a contradiction to (\ref{3.11aa}).
Hence, the desired uniqueness holds.

\noindent{\it Step 3. We show (ii), with the aid of Lemma \ref{theorem2.1}.}

By the same argument in the proof of \eqref{2.18}, we can easily check that the solution $(u,v)$ obtained in Step 1 satisfies
   \begin{eqnarray}\label{2.19}
\|v\|_{L^\infty(0,T^*; L^\infty(\Omega))}+\|\nabla v\|_{L^\infty(0,T^*; [L^\infty(\Omega)]^n)}\leq C(\Omega)\|u\|_{C([0,T^*]; L^p(\Omega))}.
 \end{eqnarray}
This, along with (\ref{3.6aa}),  implies that
  \begin{eqnarray*}
\|v\|_{L^\infty(0,T^*; L^\infty(\Omega))}+\|\nabla v\|_{L^\infty(0,T^*; [L^\infty(\Omega)]^n)} \leq C(\|u_0\|_{p}, T^*, \Omega),
 \end{eqnarray*}
which leads to  $v\in L^\infty(0,T^*; L^\infty(\Omega))$.

Next, we define a differential operator $\mathcal{A}_1(t)$ by
$$
\mathcal{A}_1(t)\varphi:=\triangle \varphi-\nabla v\cdot\nabla \varphi -av\varphi,\;\;\varphi\in W^{2,p}(\Omega)\cap W^{1,p}_0(\Omega),
$$
and let $U_1(t,s)$ be the evolution system generated by  $\mathcal{A}_1(t)$ $(T^*\geq t\geq0)$.
Since  (\ref{1.1}) can be rewritten as:
\begin{eqnarray*}
\begin{cases}
\partial_t u-\triangle u+\nabla v\cdot\nabla u +a vu=b u^2,& \textrm{ in } \Omega\times(0,T^*],\\
-\triangle v+av-bu=0,& \textrm{ in } \Omega\times(0,T^*],\\
u=0, \: v=0,&  \textrm{ on } \partial\Omega\times(0,T^*],\\
u(x,0)=u_0,     &   \textrm{ in } \Omega,
\end{cases}
\end{eqnarray*}
we see from the variation of constants formula that
 \begin{eqnarray}\label{3.8}
u(t)=U_1(t,0)u_0+b\int_0^tU_1(t,s)u^2ds,\;\;   t\in[0,T^*].
 \end{eqnarray}
Then, it follows from (\ref{3.8}) and Lemma \ref{theorem2.1} that
 \begin{eqnarray*}
&&\|u(t)\|_{\infty}\leq\|U_1(t,0)u_0\|_{\infty}+\|b\int_0^tU_1(t,s)u^2(s) ds\|_{\infty}\crr\disp
&\leq &e^{\varpi(1+t)}t^{-\frac{n}{2p}}\|u_0\|_{p}+b\int_0^t(t-s)^{-\frac{n}{p}}e^{\varpi[1+(t-s)]}
\|u^2(s)\|_{\frac{p}{2}}ds,\;\;   t\in(0,T^*],
 \end{eqnarray*}
where $\varpi$ is a positive number depending on $C(\|u_0\|_{p}, T^*, n, \Omega)$. After some computations, we obtain
  \begin{eqnarray*}
&&\|u(t)\|_{\infty}\leq e^{\varpi(1+t)}t^{-\frac{n}{2p}}\|u_0\|_{p}+b\int_0^t(t-s)^{-\frac{n}{p}}e^{\varpi[1+(t-s)]}\|u(s)\|^2_{p}ds\crr\disp
&\leq & e^{\varpi(1+T^*)}\left(t^{-\frac{n}{2p}}\|u_0\|_{p}+ \frac{bp}{p-n}t^{1-\frac{n}{p}}\|u\|_{C([0,T^*]; L^p(\Omega))}^2\right),\;\;   t\in(0,T^*].
 \end{eqnarray*}
  Since $p>n$, the above inequality leads to $t^{\frac{n}{2p}}u\in L^\infty(0,T^*; L^\infty(\Omega))$. In summary, we conclude that $(t^{\frac{n}{2p}}u,v)\in L^\infty(0,T^*; L^\infty(\Omega))]^2$.

\noindent{\it Step 4. We show (iii).}

Let $u_0\in L^\infty(\Omega)$. By Step 1 and Step 3, we already have
$v\in L^\infty(0,T^*; L^\infty(\Omega))$ and  $u\in C([0,T^*]; L^p(\Omega))$ $(n<p<+\infty)$.
The remainder is to show $u\in L^\infty(0,T^*; L^\infty(\Omega))$.
To this end, we see from  (\ref{3.8})  and Lemma \ref{theorem2.1} that
  \begin{eqnarray}\label{3.10}
&&\|u(t)\|_{\infty}\leq\|U_1(t,0)u_0\|_{\infty}+\|b\int_0^tU_1(t,s)u^2(s) ds\|_{\infty}\crr\disp
&\leq &e^{\varpi(1+t)}\|u_0\|_{\infty}+b\int_0^t(t-s)^{-\frac{n}{p}}e^{\varpi[1+(t-s)]}\|u(s)\|^2_{p}ds\crr\disp
&\leq &e^{\varpi(1+T^*)}\big(\|u_0\|_{\infty}+ \frac{bp}{p-n}t^{1-\frac{n}{p}}(\|u\|_{C([0,T^*]; L^p(\Omega))}^2\big)<+\infty,\;\;t\in[0,T^*],
 \end{eqnarray}
where $\varpi$ is a positive number depending on $C(\|u_0\|_{\infty}, T^*, n, \Omega)$.
 From \eqref{3.10} and $p>n$, it follows that $u\in L^\infty(0,T^*; L^\infty(\Omega))$.

Hence, we complete  the proof of Theorem \ref{theorem4.2}.
\end{proof}

\begin{remark}\label{remark2.3,2-11}
{\it
$(i)$ It follows from (\ref{3.6aa}) and (\ref{3.10}) that when $u_0\in L^\infty(\Omega)$,
 \begin{eqnarray*}
\|u\|_{L^\infty(0,T^*; L^\infty(\Omega))}\leq  C(\|u_0\|_\infty, T^*, n, \Omega).
 \end{eqnarray*}

 $(ii)$  Let  $u_0\in L^\infty(\Omega)$. Suppose that $(u,v)$ is the solution to the system (\ref{1.1}) over $[0,T]$ for some $T>0$.
 First, by the conclusion $(iii)$ in Theorem \ref{theorem4.2},  we have  that
 $(u,v)\in[L^\infty(0,T; L^\infty(\Omega))]^2$,  and it follows from  (\ref{2.19}) that
 \begin{eqnarray}\label{4.3bb}
\|v\|_{ L^\infty(0,T; L^\infty(\Omega))}\leq CM \textrm{ and } \|\nabla v\|_{ L^\infty(0,T; [L^\infty(\Omega)]^n)}\leq CM,
\end{eqnarray}
  where $C>0$ depends only  on $\Omega$ and where
 \begin{eqnarray}\label{M}
M:=M(\|u_0\|_\infty, T):=\max\{1, \|u\|_{L^\infty(0, T; L^{\infty}(\Omega))}\}.
 \end{eqnarray}
 Second, for each $p\in(n, +\infty)$,
 the above solution  $(u, v)\in C([0,T]; L^p(\Omega))\times C((0,T]; W^{2,p}(\Omega)\cap W_0^{1,p}(\Omega))$.

   $(iii)$ By \eqref{1.1} and the uniqueness given in Theorem \ref{theorem4.2}, we
can easily verify what follows:  $u_0=0$
over $\Omega$ if and only if $u=v=0$ over $\Omega\times (0,+\infty)$.}
\end{remark}

Next, we  present some  estimates for  the system (\ref{1.1}).
 \begin{theorem}\label{proposition3.2}
 Let  $u_0\in L^\infty(\Omega)$. Suppose that  $(u,v)$ is the solution to the system (\ref{1.1}) over $[0,T]$ for some $T>0$.
 Then,
 \begin{eqnarray}\label{3.8aaa}
\|u(\cdot, t)\|_2\leq e^{L_1M^2t}\|u_0\|_2, \;\; t\in[0,T];
 \end{eqnarray}
\begin{eqnarray}\label{3.9aaa}
\|\nabla u(\cdot, t)\|_2\leq \frac{e^{L_1M^2t}}{\sqrt{t}}\|u_0\|_2, \;\; t\in(0,T],
 \end{eqnarray}
 where  $L_1>0$ is a constant  depending only on $\Omega$ and where $M$ is  given by (\ref{M}).
\end{theorem}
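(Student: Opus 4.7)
My plan is to obtain both bounds by standard energy estimates on the first equation of \eqref{1.1}, treating the chemotaxis drift $\nabla\cdot(u\nabla v)$ as a lower-order perturbation whose size is controlled by the a~priori bounds collected in Remark~\ref{remark2.3,2-11}, in particular the estimate $\|\nabla v\|_{L^\infty(0,T;[L^\infty(\Omega)]^n)}\le CM$ from \eqref{4.3bb}. Throughout I will use that $u(\cdot,t)\in H^1_0(\Omega)$ for a.e. $t\in(0,T]$ so that all boundary terms in the integrations by parts vanish.

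For \eqref{3.8aaa}, I would multiply the equation $\partial_t u-\Delta u+\nabla\cdot(u\nabla v)=0$ by $u$ and integrate over $\Omega$. One integration by parts produces
\[
\tfrac{1}{2}\tfrac{d}{dt}\|u\|_2^2+\|\nabla u\|_2^2=\int_\Omega u\,\nabla v\cdot\nabla u\,dx,
\]
and then a Cauchy--Schwarz plus Young step bounds the right-hand side by $\tfrac{1}{2}\|\nabla u\|_2^2+\tfrac{1}{2}\|\nabla v\|_\infty^2\|u\|_2^2$. Using \eqref{4.3bb} to replace $\|\nabla v\|_\infty$ by $CM$ and absorbing $\tfrac{1}{2}\|\nabla u\|_2^2$ yields $\tfrac{d}{dt}\|u\|_2^2+\|\nabla u\|_2^2\le C^2M^2\|u\|_2^2$, and Gronwall produces \eqref{3.8aaa} with $L_1$ any constant $\ge C^2/2$ that depends only on $\Omega$. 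As a byproduct, integrating in time gives the \emph{energy identity}
\[
\|u(t)\|_2^2+\int_0^t\|\nabla u(s)\|_2^2\,ds\le C(M,t)\|u_0\|_2^2,
\]
which I will reuse in the next step.

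For the smoothing estimate \eqref{3.9aaa}, I would multiply the first equation by $-\Delta u$ to get
\[
\tfrac{1}{2}\tfrac{d}{dt}\|\nabla u\|_2^2+\|\Delta u\|_2^2=\int_\Omega \nabla\cdot(u\nabla v)\,\Delta u\,dx .
\]
The key trick here is to expand $\nabla\cdot(u\nabla v)=\nabla u\cdot\nabla v+u\Delta v$ and then use the \emph{second} equation of \eqref{1.1} to replace $\Delta v$ by $av-bu$, so that the right-hand side becomes $\int(\nabla u\cdot\nabla v+auv-bu^2)\Delta u\,dx$, a quantity that is controlled by $CM(\|\nabla u\|_2+\|u\|_2)\|\Delta u\|_2$ thanks to the $L^\infty$ bounds on $u$, $v$, $\nabla v$. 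A Young step absorbs $\tfrac12\|\Delta u\|_2^2$ into the left-hand side, giving
\[
\tfrac{d}{dt}\|\nabla u\|_2^2\le 2C^2M^2\bigl(\|\nabla u\|_2^2+\|u\|_2^2\bigr).
\]
To handle the fact that $\|\nabla u_0\|_2$ need not be finite, I would apply this to the function $\phi(t)=t\|\nabla u(t)\|_2^2$: a direct computation gives $\phi'(t)\le \|\nabla u\|_2^2+2C^2M^2\phi(t)+2C^2M^2 t\|u(t)\|_2^2$, and inserting \eqref{3.8aaa} together with the time-integrated energy bound above on $\|\nabla u\|_2^2$ lets me close the Gronwall loop and conclude $t\|\nabla u(t)\|_2^2\le e^{2L_1M^2t}\|u_0\|_2^2$ after enlarging $L_1$ if necessary.

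The main obstacle is the nonlinear drift term $\nabla\cdot(u\nabla v)$, and specifically the appearance of $\Delta v$ in the second energy estimate. Without additional information, $\Delta v$ does not sit in $L^\infty$, so the naive bound $\|\Delta v\|_\infty$ is unavailable. Exploiting the \emph{elliptic} second equation to substitute $\Delta v=av-bu$ is what converts that potentially singular term into one controlled by the $L^\infty$ estimates from Remark~\ref{remark2.3,2-11}, and this is the only non-routine step in the argument.
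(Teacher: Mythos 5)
Your proposal is correct in substance and identifies the one genuinely essential trick in the proof---rewriting $\Delta v = av - bu$ via the elliptic equation so that no unbounded $\Delta v$ appears---which the paper also uses. For the $L^2$ bound \eqref{3.8aaa} your route (test with $u$, Cauchy--Young, Gronwall) coincides with the paper's, which simply carries the exponential weight $e^{-lt}$ explicitly instead of invoking Gronwall by name. Where you genuinely diverge is in the proof of the $H^1$ smoothing estimate \eqref{3.9aaa}. The paper tests the equation with $e^{-lt}\partial_t u$, which together with the Poincar\'e inequality yields that $t\mapsto e^{-L_1M^2t}\|\nabla u(t)\|_2^2$ is \emph{non-increasing}; combined with the time-integrated $L^2$ energy inequality and the elementary fact that $tf(t)\le\int_0^t f(s)\,ds$ for non-increasing $f$, this instantly gives $t\|\nabla u(t)\|_2^2\le e^{L_1M^2t}\|u_0\|_2^2$ with no extraneous constants. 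You instead test with $-\Delta u$ and run a weighted Gronwall argument on $\phi(t)=t\|\nabla u(t)\|_2^2$. This also works, and your observation that the factor $t$ in $\phi$ handles the fact that $\|\nabla u_0\|_2$ need not be finite is exactly right. But the final step ("close the Gronwall loop \dots after enlarging $L_1$") is the one place you should be more careful: a naive bookkeeping produces bounds of the form $(1+Ct+Ct^2)\,e^{cM^2t}\|u_0\|_2^2$, and to reduce this to the clean form $e^{L_1M^2t}\|u_0\|_2^2$ one must absorb the polynomial-in-$t$ factors into the exponential by exploiting $M\ge 1$ (so that $1+t\le e^{t}\le e^{M^2t}$ and similarly for $t^2$), and also use the exponentially weighted form $\int_0^t e^{-cM^2s}\|\nabla u\|_2^2\,ds\le\|u_0\|_2^2$ of the energy inequality rather than one with an extra multiplicative constant. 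None of this is a conceptual obstruction, but it is precisely the bookkeeping that the paper's monotonicity argument sidesteps; the $\partial_t u$ multiplier buys you a genuine pointwise-in-time monotonicity rather than an integral Gronwall inequality, and that is why the paper's derivation of \eqref{3.9aaa} is shorter and cleaner even though the two arguments are essentially of the same strength.
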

\begin{proof}
Multiplying  the first equation of (\ref{1.1}) by $e^{-lt}\partial_t u$, where $l>0$  will be determined later, and then integrating  it over $\Omega$, we obtain
\begin{eqnarray*}
\int_{\Omega}[\partial_t u-\triangle u+\nabla\cdot(u\nabla v)]\cdot e^{-lt}\partial_t udx=0,\; t\in[0,T].
 \end{eqnarray*}
 From the above,  the second equation of the system \eqref{1.1}, \eqref{4.3bb},  Cauchy's inequality, and Lemma \ref{lemma2.1},  we find
\begin{eqnarray*}
&& e^{-lt}\int_{\Omega}|\partial_t u|^2dx + \frac{1}{2}\frac{d}{dt}(e^{-lt}\int_{\Omega}|\nabla u|^2dx) \crr\disp
&=& -e^{-lt}\int_{\Omega}(\nabla u \cdot\nabla v+auv-bu^2)\cdot\partial_t udx-\frac{l}{2}e^{-lt}\int_{\Omega}|\nabla u|^2dx \crr\disp
&\leq& \frac{1}{2}e^{-lt}\int_{\Omega}|\partial_t u|^2dx+ C(\Omega)M^2e^{-lt}\int_{\Omega}|\nabla u|^2dx \crr\disp
&+& C(\Omega)M^2e^{-lt}\int_{\Omega}|u|^2dx+\frac{3b^2}{2}M^2e^{-lt}
\int_{\Omega}|u|^2dx-\frac{l}{2}e^{-lt}\int_{\Omega}|\nabla u|^2dx,\; t\in [0,T],
 \end{eqnarray*}
which yields that when $t\in[0,T]$,
 \begin{eqnarray}\label{3.9a}
&& \frac{1}{2}e^{-lt}\int_{\Omega}|\partial_t u|^2dx + \frac{1}{2}\frac{d}{dt}(e^{-lt}\int_{\Omega}|\nabla u|^2dx) \nonumber \\
&\leq&\!\!\! C(\Omega)M^2e^{-lt}\int_{\Omega}|\nabla u|^2dx  +(C(\Omega)+\frac{3b^2}{2})M^2e^{-lt}\!\!\int_{\Omega}|u|^2dx -\frac{l}{2}e^{-lt}\int_{\Omega}|\nabla u|^2dx.
 \end{eqnarray}
  Meanwhile, multiplying the first equation of (\ref{1.1}) by $e^{-lt}u$, and integrating it over $\Omega$,
 we see
\begin{eqnarray*}
\int_{\Omega}[\partial_t u-\triangle u+\nabla\cdot(u\nabla v)]\cdot e^{-lt}udx=0,\;t\in[0,T].
 \end{eqnarray*}
 From the above, the second equation of the system \eqref{1.1}, \eqref{4.3bb}, Cauchy's inequality, and Lemma \ref{lemma2.1}, we obtain
 \begin{eqnarray*}
&& \frac{1}{2}\frac{d}{dt}(e^{-lt}\int_{\Omega}|u|^2dx)+e^{-lt}\int_{\Omega}|\nabla u|^2dx  \\
&=& -e^{-lt}\int_{\Omega}(\nabla u \cdot\nabla v+av-bu)udx-\frac{l}{2}e^{-lt}\int_{\Omega}|u|^2dx\\
&\leq& \frac{1}{2}e^{-lt}\int_{\Omega}|\nabla u|^2dx+ C(\Omega)M^2e^{-lt}\int_{\Omega}|u|^2dx+\frac{a}{2}e^{-lt}\int_{\Omega}|v|^2dx\\
 &+&(\frac{a}{2}+b)e^{-lt}\int_{\Omega}|u|^2dx-\frac{l}{2}e^{-lt}\int_{\Omega}|u|^2dx,\:\:t\in[0,T],
 \end{eqnarray*}
from which, it follows that
\begin{eqnarray}\label{3.10a}
&& \frac{d}{dt}(\frac{1}{2}e^{-lt}\int_{\Omega}|u|^2dx)+\frac{1}{2}e^{-lt}\int_{\Omega}|\nabla u|^2dx \nonumber \\
&\leq& (C(\Omega)+a+b)M^2e^{-lt}\int_{\Omega}|u|^2dx -\frac{l}{2}e^{-lt}\int_{\Omega}|u|^2dx,\;t\in[0,T].
 \end{eqnarray}
According to  (\ref{3.9a}), (\ref{3.10a}), and  Poincar\'{e} inequality, there is a constant $l=L_1M^2>0$,
where $L_1>0$ is a constant
 depending only on $\Omega$,  so that
\begin{eqnarray}\label{3.11}
\frac{d}{dt}(e^{-L_1M^2t}\int_{\Omega}|\nabla u(x,t)|^2dx)+e^{-L_1M^2t}\int_{\Omega}|\partial_t u(x,t)|^2dx \leq0,\; t\in[0,T];
 \end{eqnarray}
 \begin{eqnarray}\label{3.12}
 \frac{d}{dt}(e^{-L_1M^2t}\int_{\Omega}|u(x,t)|^2dx)+e^{-L_1M^2t}\int_{\Omega}|\nabla u(x,t)|^2dx\leq0,\; t\in[0,T].
 \end{eqnarray}

 We now show (\ref{3.8aaa}). Integrating (\ref{3.12}) over $(0,t)$, we obtain
 \begin{eqnarray}\label{3.16}
e^{-L_1M^2t}\int_{\Omega}|u(x,t)|^2dx+\int_0^t \big(e^{-L_1M^2s}\int_{\Omega}|\nabla u(x,s)|^2
dx\big)ds\leq \|u_0\|_2^2,\;t\in[0,T],
 \end{eqnarray}
 which leads to (\ref{3.8aaa}).
  By (\ref{3.11}), we  see that the function $t\rightarrow e^{-L_1M^2t}\int_{\Omega}|\nabla u(x,t)|^2dx$   is non-increasing  over $[0, T]$.  Thus,
 \begin{eqnarray}\label{3.15}
te^{-L_1M^2t}\int_{\Omega}|\nabla u(x,t)|^2dx\leq \int_0^t \big(e^{-L_1M^2s}\int_{\Omega}|\nabla u(x,s)|^2dx\big)ds,\;t\in[0,T].
 \end{eqnarray}

Finally, (\ref{3.9aaa}) follows from  (\ref{3.15}) and (\ref{3.16}) at once. This completes the proof.
\end{proof}


\section{Local interpolation inequality for the system (\ref{1.1}) }
\ \ \ \
First of all, since   $\Omega\subset \mathbb{R}^n$ is a bounded domain  with a $C^2$ boundary $\partial\Omega$,
it follows from Theorem 8 in \cite{AEWZ} that for each $g\in\partial\Omega$, there is a triplet $(x_{g}, R_{g}, \delta_g)\in\Omega\times\mathbb{R}^{+}\times(0,1]$  such that
\begin{eqnarray}\label{3.1}
|g-x_{g}|<R_{g}\;  \textrm{ and } \;\Omega\cap B_{(1+2\delta_g)R_{g}} \textrm{ is star-shaped with } x_{g},
\end{eqnarray}
where $B_{(1+2\delta_g)R_{g}}$ stands for the open ball centered at $x_{g}$ and of radius $(1+2\delta_g)R_{g}$.

{\it Throughout this section,   we arbitrarily fix $u_0\in L^\infty(\Omega)$ and $T>0$
so that  the system (\ref{1.1}) has a unique solution $(u,v)$ over $[0,T]$;
 we arbitrarily fix $g\in\partial\Omega$ with $(x_{g}, R_{g}, \delta_g)\in\Omega\times\mathbb{R}^{+}\times(0,1]$ satisfying \eqref{3.1};
we  simply write $B_R$ for the open ball  centered at $x_{g}$ and of radius $R$.}

The main result of this section is the following Theorem \ref{lemma4.4}, which  builds up  a local interpolation inequality for
the solution $(u,v)$.

\begin{theorem}\label{lemma4.4}
For each  $r\in (0,R_g)$ with $B_r:=B(x_g,r)\subset\Omega$,
 there are two constants $D>0$ and $\gamma\in(0,1)$, which depend only on  $\Omega, r, R_g, \delta_g, T$, and $M$ (with $M$ given by \eqref{M}) so that
\begin{eqnarray}\label{4.8aa}
 \int_{\Omega\cap B_{R_g}}|u(x,T)|^2dx
\leq \bigg(D\int_{\Omega}|u_0(x)|^2dx\bigg)^\gamma\bigg(2\int_{B_r}|u(x,T)|^2dx\bigg)^{1-\gamma}.
\end{eqnarray}
\end{theorem}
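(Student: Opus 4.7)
The plan is to carry out the strategy of Remark \ref{remark1.2,3-22}(a4): rewrite (\ref{1.1}) as a linear parabolic equation with $L^\infty$-bounded coefficients, run a parabolic frequency-function argument on a Gaussian weight centered at $x_g$, convert the resulting monotonicity into a doubling-type inequality at time $T$, and then localize using the star-shaped condition (\ref{3.1}). First, I would rewrite the first equation of (\ref{1.1}) via $\nabla\cdot(u\nabla v)=\nabla v\cdot\nabla u+u\triangle v$ together with $\triangle v=av-bu$ from the second equation, obtaining
\begin{equation*}
\partial_t u-\triangle u+A\cdot\nabla u+Bu=0,\qquad A:=\nabla v,\;\; B:=av-bu.
\end{equation*}
By Remark \ref{remark2.3,2-11} and (\ref{4.3bb}), both $\|A\|_{L^\infty(\Omega\times(0,T))}$ and $\|B\|_{L^\infty(\Omega\times(0,T))}$ are bounded by $CM$, so from this point on $u$ may be viewed as a solution to a linear parabolic equation whose drift and potential are controlled by $M$.

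Next, I would fix a small $h>0$ and introduce the backward Gaussian
\begin{equation*}
G(x,t)=(T-t+h)^{-n/2}\exp\!\Big(-|x-x_g|^2/\bigl(4(T-t+h)\bigr)\Big),
\end{equation*}
a smooth cutoff $\phi\in C_c^\infty(B_{(1+2\delta_g)R_g})$ with $\phi\equiv 1$ on $B_{(1+\delta_g)R_g}$, and the quantities $H(t)=\int_\Omega(\phi u)^2 G\,dx$, $D(t)=\int_\Omega|\nabla(\phi u)|^2 G\,dx$. Differentiating the frequency $N(t):=(T-t+h)D(t)/H(t)$ in $t$ and integrating by parts repeatedly using the rewritten equation would yield an almost-monotonicity of the form $\tfrac{d}{dt}\bigl(e^{\Lambda t}N(t)\bigr)\le\text{(error supported where }\nabla\phi\neq 0)$ with $\Lambda=C(1+M^2)$ absorbing the contributions of $A\cdot\nabla u$ and $Bu$. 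The star-shapedness of $\Omega\cap B_{(1+2\delta_g)R_g}$ at $x_g$ enters precisely at the boundary integrals on $\partial\Omega\cap B_{(1+2\delta_g)R_g}$ produced by integration by parts: the factor $(x-x_g)\cdot\nu(x)\ge 0$ combines with $u|_{\partial\Omega}=0$ to give them a good sign. Integrating $\tfrac{d}{dt}\log H$ backward from $T$ on a short interval would then produce a three-ball-type inequality
\begin{equation*}
H(T)\le \bigl(C\,H(T-\tau)\bigr)^{\theta}\,\bigl(\text{cut-off error on }B_{(1+2\delta_g)R_g}\setminus B_{(1+\delta_g)R_g}\bigr)^{1-\theta}
\end{equation*}
for some $\theta=\theta(\Omega,r,R_g,\delta_g,T,M)\in(0,1)$. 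Choosing $r$ to fit inside $B_{(1+\delta_g)R_g}$, bounding $H(T-\tau)$ and the cut-off error by $\|u_0\|_2^2$ via (\ref{3.8aaa}), and using pointwise upper/lower bounds on $G(\cdot,T)$ over $B_r$, $B_{R_g}$ and $\Omega$, one unwinds $H(T)$ into an unweighted $L^2$-integral over $\Omega\cap B_{R_g}$ on the left and over $B_r$ on the right, yielding (\ref{4.8aa}).

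The main obstacle is the monotonicity step. Because $\phi u$ does not solve a clean equation and the lower-order terms $A\cdot\nabla u$, $Bu$ destroy the exact frequency monotonicity of the pure heat equation, one must carefully track the commutator terms: the $M$-dependent contributions have to be absorbed into the multiplier $e^{\Lambda t}$, while the terms supported on $\{\nabla\phi\neq 0\}\cup\{\triangle\phi\neq 0\}$ must be routed into a remainder whose smallness is guaranteed exactly by the geometric configuration (\ref{3.1}). A subsidiary difficulty is the bookkeeping to convert the Gaussian-weighted three-ball inequality into the unweighted statement (\ref{4.8aa}) while keeping the constants $D$ and the exponent $\gamma$ dependent only on $\Omega, r, R_g, \delta_g, T, M$.
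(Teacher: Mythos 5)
Your overall architecture --- rewrite the first equation as a linear parabolic equation with $L^\infty$-controlled coefficients, truncate near $x_g$, run a parabolic frequency-function argument with a backward Gaussian, and then exploit star-shapedness --- is exactly the framework the paper uses. But there is a genuine gap in the middle that prevents the argument, as written, from producing constants depending only on $\Omega, r, R_g, \delta_g, T, M$.

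The gap is in the treatment of the cutoff error. You acknowledge that the terms supported on $\{\nabla\phi\neq 0\}$ are the problem, but you assert their ``smallness is guaranteed exactly by the geometric configuration \eqref{3.1}.'' That is not where smallness comes from: star-shapedness only gives the good sign $(x-x_g)\cdot\nu\ge 0$ for the $\partial\Omega$ boundary terms in the integration by parts. The cutoff error lives on an interior annulus and is controlled solely by the exponential decay of the Gaussian weight there, i.e., by a factor of the rough form $e^{-c/ (\varepsilon+\lambda)}$. For this to be dominated by the denominator $\int (\phi u)^2 G_\lambda$ (which could a priori be very small), you need a quantitative lower bound on $\int_{\Omega\cap B_{(1+\delta_g)R_g}}|u(\cdot,t)|^2$ over the time window being used, and you need the width $h$ (equivalently $\lambda$, $\varepsilon$) of the Gaussian and the length of the backward time interval to be chosen adaptively as a function of the observable ratio $\mathbb{E}/\int_{\Omega\cap B_{R_g}}|u(\cdot,T)|^2$. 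In the paper this is precisely the content of Lemma \ref{theta}: a ``non-degeneracy'' estimate, itself obtained by an energy argument with a static Gaussian weight, which fixes a time length $\theta$ with $\tfrac{1}{\theta}\sim \ln\big(\mathbb{E}/\int_{\Omega\cap B_{R_g}}|u(\cdot,T)|^2\big)$ so that the annular error term is comparable to the inner integral. This adaptive choice is then what makes the final factor $e^{R_g^2/(4\lambda)}$ unwind into the interpolation form \eqref{4.8aa} with a fixed exponent $\gamma$.

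Without that step, integrating your almost-monotonicity and bounding ``$H(T-\tau)$ and the cut-off error by $\|u_0\|_2^2$'' would at best give an inequality whose implicit constants (and exponent) depend on how small $u(\cdot,T)$ is near $x_g$, i.e., on the solution itself, which is exactly what the theorem must avoid. Secondary, more cosmetic differences: you include the factor $(T-t+h)$ inside the frequency quotient while the paper keeps it outside (both conventions appear in the literature and lead to the same differential inequality up to a $1/(T-t+\lambda)$ drift term), and the final localization in the paper is done not by a three-ball inequality but by the weighted Hardy-type inequality of Lemma \ref{lemma 4.3} applied at time $T$ once $\lambda N_\lambda(T)$ has been shown to be $O(1)$; your ``pointwise upper/lower bounds on $G(\cdot,T)$'' gloss over this step. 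To close the proof you should insert an analogue of Lemma \ref{theta} and make the choice of $\theta$, $\varepsilon=k\theta$ and $\lambda=\mu\varepsilon$ explicit, as in Steps 2--4 of the paper.
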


To prove Theorem \ref{lemma4.4}, we need several lemmas. We start with introducing
two functions in the following manner: Arbitrarily take $z\in H^1(0,T; L^2(\Omega\cap  B_{(1+2\delta_g)R_{g}} ))\cap L^2(0,T; H^2\cap H_0^1(\Omega\cap  B_{(1+2\delta_g)R_{g}} ))$,  then
for each  $\lambda>0$, we define functions
\begin{eqnarray}\label{G}
G_{\lambda}(x,t):=\frac{1}{(T-t+\lambda)^{n/2}}e^{-\frac{|x-x_g|^2}{4(T-t+\lambda)}},\:\: (x,t)\in \mathbb{R}^n\times[0,T]
\end{eqnarray}
and
\begin{eqnarray}\label{N}
N_{\lambda}(t):=\frac{\int_{\Omega\cap  B_{(1+2\delta_g)R_{g}} }|\nabla z(x,t)|^2G_\lambda(x,t)dx}{\int_{\Omega\cap B_{(1+2\delta_g)R_{g}} }|z(x,t)|^2G_\lambda(x,t)dx},
\end{eqnarray}
when $t\in (0,T]$ and $\int_{\Omega\cap  B_{(1+2\delta_g)R_{g}} }|z(x,t)|^2G_\lambda(x,t)dx\neq0$.
\begin{remark}
The above  $N_{\lambda}(\cdot)$
can be viewed as a localized  frequency function.
{\it We simply call it the frequency function.}
\end{remark}

\begin{lemma}\label{lemma4.1}
The frequency function $N_{\lambda}(\cdot)$ (given by \eqref{N}) has the following properties: \\
$(i)$ When $t\in(0,T]$,  $\lambda>0$, and $\int_{\Omega\cap  B_{(1+2\delta_g)R_{g}} }|z(x,t)|^2G_\lambda(x,t)dx\neq0$,
\begin{eqnarray*}\label{4.3}
&&\frac{1}{2}\frac{d}{dt}\int_{\Omega\cap B_{(1+2\delta_g)R_{g}}}\!\!\!|z(x,t)|^2G_\lambda(x,t)dx+N_{\lambda}(t)\int_{\Omega\cap B_{(1+2\delta_g)R_{g}}}\!\!\!|z(x,t)|^2G_\lambda(x,t)dx\nonumber \\
&=&\int_{\Omega\cap B_{(1+2\delta_g)R_{g}}} \!\!\!z(x,t)(\partial_t z(x,t)-\triangle z(x,t))G_{\lambda}(x,t)dx.
\end{eqnarray*}
$(ii)$
  When $t\in(0,T]$,  $\lambda>0$, and $\int_{\Omega\cap  B_{(1+2\delta_g)R_{g}} }|z(x,t)|^2G_\lambda(x,t)dx\neq0$,
\begin{eqnarray*}\label{4.4}
\frac{d}{dt}N_{\lambda} (t)\leq{1\over T-t+\lambda}N_{\lambda}(t)+{\int_{\Omega\cap B_{(1+2\delta_g)R_{g}}} (\partial_t z(x,t)-\triangle z(x,t))^2G_{\lambda}(x,t)dx\over \int_{\Omega\cap B_{(1+2\delta_g)R_{g}}}|z(x,t)|^2G_\lambda(x,t)dx}.
\end{eqnarray*}
\end{lemma}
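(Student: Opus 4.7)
My plan is to carry out the standard Poon-type frequency computation, adapted to the localized weight $G_\lambda$. As a preliminary, a direct differentiation of \eqref{G} gives the backward heat identity $\partial_t G_\lambda + \Delta G_\lambda = 0$ together with $\nabla G_\lambda = -\frac{x-x_g}{2(T-t+\lambda)} G_\lambda$; these are the only structural properties of $G_\lambda$ needed below.

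For part $(i)$, I would differentiate $H(t) := \int_{\Omega\cap B_{(1+2\delta_g)R_g}} |z|^2 G_\lambda\,dx$ under the integral sign. The term $\int |z|^2 \partial_t G_\lambda\,dx = -\int |z|^2 \Delta G_\lambda\,dx$ becomes, after one Green identity (whose boundary contribution vanishes because $z\in H_0^1$), $2\int z\,\nabla z\cdot\nabla G_\lambda\,dx$; a second Green identity applied to $\int z\,\Delta z\, G_\lambda\,dx$ rewrites this as $-2\int |\nabla z|^2 G_\lambda\,dx - 2\int z\,\Delta z\, G_\lambda\,dx$. Assembling the pieces and recognizing $N_\lambda(t) H(t) = \int |\nabla z|^2 G_\lambda\,dx$ yields $(i)$.

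For part $(ii)$, I would apply the quotient rule $N_\lambda'(t) = D'(t)/H(t) - N_\lambda(t) H'(t)/H(t)$ with $D(t) := \int |\nabla z|^2 G_\lambda\,dx$, substituting the formula for $H'(t)$ produced in $(i)$. The derivative $D'(t)$ splits as $2\int \nabla z\cdot\nabla \partial_t z\, G_\lambda\,dx + \int |\nabla z|^2 \partial_t G_\lambda\,dx$; the first integrates by parts to $-2\int \partial_t z\,(\Delta z\, G_\lambda + \nabla z\cdot\nabla G_\lambda)\,dx$, while the second, after invoking $\partial_t G_\lambda = -\Delta G_\lambda$, is reworked through a Rellich-type identity that produces the term $\frac{1}{T-t+\lambda} D(t)$ alongside a boundary contribution and a cross-term which recombines with the first piece into a full square in $\partial_t z - \Delta z$. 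Completing the square absorbs all negative-definite contributions into $-\int (\partial_t z - \Delta z)^2 G_\lambda\,dx$, and dividing by $H(t)$ delivers the inequality claimed in $(ii)$.

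The main obstacle is controlling the surface integrals that appear after integration by parts in space. While $z$ vanishes on $\partial(\Omega\cap B_{(1+2\delta_g)R_g})$, $\nabla z$ generally does not, and the Rellich manipulation leaves a boundary contribution of the form $\int_{\partial(\Omega\cap B_{(1+2\delta_g)R_g})} |\partial_\nu z|^2\, (x-x_g)\cdot \nu\, G_\lambda\,dS$. The star-shapedness of $\Omega\cap B_{(1+2\delta_g)R_g}$ with respect to $x_g$ given in \eqref{3.1} ensures $(x-x_g)\cdot \nu \geq 0$ almost everywhere on this boundary, so this surface term carries the correct sign and can be discarded, converting what would otherwise be an identity into the desired inequality. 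A routine density argument exploiting the regularity of $z$ stated in the lemma justifies all of the integrations by parts performed above.
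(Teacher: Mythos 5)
Your outline correctly reproduces the Poon--Escauriaza--Fern\'andez--Vessella frequency-function computation that the paper itself invokes (by citing Lemma~3.1 of \cite{Phung1} and Lemma~2.3 of \cite{PW}) without giving details: the backward heat identity $\partial_t G_\lambda+\Delta G_\lambda=0$ and two Green identities give $(i)$, while the Rellich manipulation, the star-shapedness of $\Omega\cap B_{(1+2\delta_g)R_g}$ with respect to $x_g$ to discard the surface term, and a final Cauchy--Schwarz give $(ii)$. One small clarification of your last step, since ``absorbs all negative-definite contributions into $-\int(\partial_t z-\Delta z)^2G_\lambda$'' is slightly misleading: writing $W:=\nabla\log G_\lambda\cdot\nabla z$, $a:=\partial_t z+W$, $b:=\Delta z+W$, one finds $H'=2\int zaG_\lambda$, $D=-\int zbG_\lambda$, and (after dropping the nonpositive boundary term) $D'\le -2\int abG_\lambda+\tfrac{1}{T-t+\lambda}D$, whence $D'H-DH'\le -2H\int abG_\lambda+2\big(\int zaG_\lambda\big)\big(\int zbG_\lambda\big)+\tfrac{1}{T-t+\lambda}DH$; the symmetrization $a=c+d$, $b=c-d$ with $d=\tfrac12(\partial_t z-\Delta z)$ and Cauchy--Schwarz on the $c$-part shows the first two terms are $\le 2H\int d^2G_\lambda=\tfrac12H\int(\partial_t z-\Delta z)^2G_\lambda$, i.e.\ the completed square discards nonpositive pieces and leaves a \emph{positive} multiple of $\int(\partial_t z-\Delta z)^2G_\lambda$ on the upper bound, which after dividing by $H^2$ is exactly (and in fact slightly sharper than) the inequality in $(ii)$.
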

\begin{proof}
The proof can be done by the same method used in Lemma 3.1 of \cite{Phung1} (see also Lemma 2.3 in \cite{PW}). We omit the details.
\end{proof}

\begin{lemma}\label{theta}
Suppose that $\int_{\Omega\cap B_{R_g}}|u(x,T)|^2dx\neq0$. Then
 there are  positive constants $L_2,L_3, L_4$, and $L_5$ (depending only on $\Omega, R_g$, and $\delta_g$) so that
\begin{eqnarray}\label{3.7,1-25}
0<\frac{\mathbb{E}}{\int_{\Omega\cap B_{(1+\delta_g)R_g}}|u(x,t)|^2dx}\leq e^{\frac{L_2}{\theta}},\:  \textrm{ when}\;\;  T-\theta\leq t\leq T,
\end{eqnarray}
where
\begin{eqnarray}\label{3.8,1-25}
\frac{1}{\theta}:=L_3\ln\bigg( e^{L_4M^2T+L_5(1+\frac{1}{T})}\frac{\mathbb{E}}{\int_{\Omega\cap B_{R_g}}|u(x,T)|^2dx}\bigg)\in \big( \frac{1}{\min\{1,{T}/{2}\}},+\infty\big),
\end{eqnarray}
with $M$ given by (\ref{M}) and
\begin{eqnarray}\label{3.9,1-25}
\mathbb{E}:=\int_{\Omega}|u(x,0)|^2dx+\int_0^T\int_{\Omega}|u(x,s)|^2dxds.
\end{eqnarray}
\end{lemma}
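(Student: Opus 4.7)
The plan is to apply the frequency function machinery of Lemma \ref{lemma4.1} with $z=u$ on the star-shaped set $\Omega\cap B_{(1+2\delta_g)R_g}$, treating the first equation of (\ref{1.1}) as a linear parabolic equation with bounded coefficients. Using the second equation to expand $\nabla\cdot(u\nabla v)=\nabla u\cdot\nabla v+u(av-bu)$, the equation for $u$ takes the form
\begin{eqnarray*}
\partial_t u-\Delta u+A\cdot\nabla u+Bu=0,\qquad A:=\nabla v,\;B:=av-bu,
\end{eqnarray*}
and Remark \ref{remark2.3,2-11} gives $\|A\|_\infty+\|B\|_\infty\leq CM$ with $C=C(\Omega,a,b)$. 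Setting $H_\lambda(t):=\int_{\Omega\cap B_{(1+2\delta_g)R_g}}|u(x,t)|^2 G_\lambda(x,t)dx$, the frequency function $N_\lambda$ becomes the main tool for comparing values of $H_\lambda$ at different times.

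Next I would insert the identity $\partial_t u-\Delta u=-A\cdot\nabla u-Bu$ and the pointwise bound $|\partial_t u-\Delta u|^2\leq CM^2(|\nabla u|^2+u^2)$ into the two parts of Lemma \ref{lemma4.1}. Part (i) together with Cauchy--Schwarz yields
\begin{eqnarray*}
\frac{d}{dt}\log H_\lambda(t)\geq -2N_\lambda(t)-CM\bigl(1+N_\lambda(t)^{1/2}\bigr),
\end{eqnarray*}
and part (ii) becomes
\begin{eqnarray*}
N_\lambda'(t)\leq \frac{N_\lambda(t)}{T-t+\lambda}+CM^2\bigl(N_\lambda(t)+1\bigr).
\end{eqnarray*}
Integrating the frequency inequality over $[t,T]$ for $t\in[T-\theta,T]$ bounds $\int_t^T N_\lambda(s)ds$ uniformly in terms of $\log[H_\lambda(T-\theta)/H_\lambda(T)]$ up to an $e^{CM^2T}$ correction; feeding this back into the energy inequality produces a log-convexity estimate relating $H_\lambda(T-\theta)$, $H_\lambda(T)$ and the total energy $\mathbb{E}$.

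The third step is to convert the Gaussian-weighted quantity $H_\lambda$ into the ball integrals appearing in the statement. Since $G_\lambda(x,T)\geq \lambda^{-n/2}e^{-R_g^2/(4\lambda)}$ on $B_{R_g}$,
\begin{eqnarray*}
H_\lambda(T)\geq \lambda^{-n/2}e^{-R_g^2/(4\lambda)}\int_{\Omega\cap B_{R_g}}|u(x,T)|^2dx,
\end{eqnarray*}
while splitting the integral defining $H_\lambda(t)$ at $|x-x_g|=(1+\delta_g)R_g$ yields, for $t\in[T-\theta,T]$,
\begin{eqnarray*}
H_\lambda(t)\leq (T-t+\lambda)^{-n/2}\left(\int_{\Omega\cap B_{(1+\delta_g)R_g}}|u(x,t)|^2dx+e^{-(1+\delta_g)^2R_g^2/[4(T-t+\lambda)]}\mathbb{E}\right).
\end{eqnarray*}
Choosing $\lambda=\theta$ so that the two Gaussian scales are comparable and combining these inputs with the log-convexity bound yields an interpolation inequality of the shape
\begin{eqnarray*}
\int_{\Omega\cap B_{R_g}}|u(x,T)|^2dx\leq e^{L_4M^2T+L_5(1+1/T)}\mathbb{E}^{1-\alpha}\left(\int_{\Omega\cap B_{(1+\delta_g)R_g}}|u(x,t)|^2dx\right)^\alpha
\end{eqnarray*}
for some $\alpha=\alpha(\Omega,R_g,\delta_g)\in(0,1)$. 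Rearranging this so that $\mathbb{E}/\int_{B_{(1+\delta_g)R_g}}|u(x,t)|^2dx$ appears on the left and defining $\theta$ through \eqref{3.8,1-25} with $L_3=1/\alpha$ and $L_2=1$ (say) produces \eqref{3.7,1-25}; positivity of the ratio is automatic because $\mathbb{E}>0$ under the standing hypothesis that $u(\cdot,T)\not\equiv 0$ on $B_{R_g}$.

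The main obstacle will be tracking how the $M$-dependence propagates through the frequency estimates, to ensure it ends up inside the explicit factor $e^{L_4M^2T}$ rather than contaminating the constant $L_2$ in the exponent $e^{L_2/\theta}$. In particular, the semi-linear correction $CM(1+N_\lambda^{1/2})$ in the $\log H_\lambda$ inequality has to be absorbed via Young's inequality into an $M^2T$-type error, and the range condition $\theta<\min\{1,T/2\}$ in \eqref{3.8,1-25} must be enforced by taking $L_3L_5$ large enough so that the lower bound for $1/\theta$ automatically exceeds $\max\{1,2/T\}$.
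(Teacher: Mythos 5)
Your plan is a genuinely different route from the paper's, but as written it has a gap that makes it inapplicable. The paper does \emph{not} prove Lemma~\ref{theta} via the frequency function. Instead, it multiplies the $u$-equation by $e^{-|x-x_g|^2/h}\sigma^2 u$ (with $\sigma$ a cutoff equal to $1$ on $B_{(1+\delta_g/2)R_g}$ and supported in $B_{(1+\delta_g)R_g}$, and $h>0$ a free parameter), integrates by parts, and runs a Gronwall argument for the weighted energy $\int|\sigma u|^2 e^{-|x-x_g|^2/h}dx$ backward from $T$ to $t$. The resulting inequality has the shape $\int_{\Omega\cap B_{R_g}}|u(\cdot,T)|^2 \leq e^{C/h}e^{CM^2(T-t)}\int_{\Omega\cap B_{R_1}}|u(\cdot,t)|^2 + C_2 e^{-c/h}e^{CM^2(T-t)}\mathbb{E}$ for $T-lh\leq t\leq T$; then $h$ is chosen (via equation \eqref{4.4aa}) so that the $\mathbb{E}$ error term is absorbed, and $\theta:=lh$. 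This is an elementary weighted energy argument with a \emph{static} Gaussian weight at a fixed scale $h$, very different from a log-convexity/frequency-function argument.

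The concrete problem with your proposal is the choice $z=u$ in Lemma~\ref{lemma4.1}. That lemma requires $z\in L^2(0,T;H^2\cap H_0^1(\Omega\cap B_{(1+2\delta_g)R_g}))$, i.e.\ $z$ must vanish on the entire boundary of $\Omega\cap B_{(1+2\delta_g)R_g}$, including the interior part $\Omega\cap\partial B_{(1+2\delta_g)R_g}$. The solution $u$ vanishes only on $\partial\Omega$, so the integrations by parts underlying both (i) and (ii) of Lemma~\ref{lemma4.1} produce uncontrolled boundary terms on the sphere. This is exactly why, in the proof of Theorem~\ref{lemma4.4}, the paper applies Lemma~\ref{lemma4.1} to $\phi=\sigma_0 u$ and not to $u$ itself, and must then carry the cutoff error $\psi=u\nabla v\cdot\nabla\sigma_0-2\nabla u\cdot\nabla\sigma_0-u\Delta\sigma_0$ through the whole argument. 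If you insert a cutoff to repair this you are essentially redoing Theorem~\ref{lemma4.4}, and you then hit a circularity: to control the ratio $\int|\psi|^2 G_\lambda/\int|\phi|^2 G_\lambda$ in the frequency inequality you need a lower bound for $\int_{\Omega\cap B_{(1+\delta_g)R_g}}|u(\cdot,t)|^2$ on $[T-\theta,T]$, which is precisely what Lemma~\ref{theta} (through Remark~\ref{remark4.1}) is meant to supply as an \emph{input} to Theorem~\ref{lemma4.4}. So the frequency function is the wrong tool here: the lemma is a lighter preparatory estimate, and the weighted Gronwall argument is both simpler and necessary to avoid circularity.
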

\begin{proof}
Write $R_1:=(1+\delta_g)R_g$.
Let $\sigma\in C_0^\infty(\mathbb{R}^n)$ be such that
\begin{eqnarray}\label{3.8bb}
\textrm{supp}\; \sigma\subset  B_{R_1}, \:   0\leq\sigma\leq1, \textrm{ and } \sigma=1  \textrm{ on }  B_{(1+\delta_g/2)R_g}.
\end{eqnarray}
Then,  there is $C=C(R_g,\delta_g)>0$ so that
\begin{eqnarray}\label{4.4dd}
 |\nabla\sigma(x)|\leq C(R_g,\delta_g), \:  x\in \mathbb{R}^n;
\end{eqnarray}
\begin{eqnarray}\label{4.5aa}
\nabla\sigma(x)=0,\;\;  x\in B_{(1+\delta_g/2)R_g}.
\end{eqnarray}
Multiplying  the first equation of (\ref{1.1}) by $e^{-\frac{|x-x_g|^2}{h}}\sigma^2u$, where $h>0$  will be determined later,  integrating it over $\Omega\cap B_{R_1}$, and then using the integration by parts,
 we find
\begin{eqnarray*}
&&\frac{1}{2}\frac{d}{dt}\int_{\Omega\cap B_{R_1}}|\sigma u|^2\cdot e^{-\frac{|x-x_g|^2}{h}}dx+\int_{\Omega\cap B_{R_1}}\nabla u\cdot\nabla(e^{-\frac{|x-x_g|^2}{h}}\sigma^2u)dx   \nonumber \\
&=&\int_{\Omega\cap B_{R_1}}u\nabla v\cdot\nabla(e^{-\frac{|x-x_g|^2}{h}}\sigma^2u)dx,
\;\;t\in[0,T].
\end{eqnarray*}
Meanwhile, one can directly check
\begin{eqnarray*}
\nabla(e^{-\frac{|x-x_g|^2}{h}}\sigma^2u)=e^{-\frac{|x-x_g|^2}{h}}\sigma^2\nabla u+2e^{-\frac{|x-x_g|^2}{h}}u\sigma\nabla\sigma
-2\sigma^2 ue^{-\frac{|x-x_g|^2}{h}}\frac{x-x_g}{h},\;\; x\in\Omega,\; t\in[0,T].
\end{eqnarray*}
These lead to that when $t\in[0,T]$,
\begin{eqnarray*}
&&\frac{1}{2}\frac{d}{dt}\int_{\Omega\cap B_{R_1}}|\sigma u|^2\cdot e^{-\frac{|x-x_g|^2}{h}}dx+\int_{\Omega\cap B_{R_1}}e^{-\frac{|x-x_g|^2}{h}}\sigma^2|\nabla u|^2dx   \crr\disp
&=&\int_{\Omega\cap B_{R_1}}e^{-\frac{|x-x_g|^2}{h}}\sigma^2u\nabla u\cdot\nabla vdx+\int_{\Omega\cap B_{R_1}}2e^{-\frac{|x-x_g|^2}{h}}\sigma u^2\nabla\sigma\cdot\nabla v dx  \crr\disp
&-& \int_{\Omega\cap B_{R_1}}2e^{-\frac{|x-x_g|^2}{h}}\sigma^2 u^2\cdot\frac{x-x_g}{h}\cdot\nabla vdx-\int_{\Omega\cap B_{R_1}}2e^{-\frac{|x-x_g|^2}{h}}u\sigma\nabla \sigma\cdot\nabla udx \crr\disp
&+&\int_{\Omega\cap B_{R_1}}2e^{-\frac{|x-x_g|^2}{h}}\sigma^2 u\frac{x-x_g}{h}\cdot\nabla udx.
\end{eqnarray*}
 The above equation,  along with  \eqref{4.3bb}, yields that when $t\in[0,T]$,
\begin{eqnarray*}
&&\frac{1}{2}\frac{d}{dt}\int_{\Omega\cap B_{R_1}}|\sigma u|^2\cdot e^{-\frac{|x-x_g|^2}{h}}dx+\int_{\Omega\cap B_{R_1}}e^{-\frac{|x-x_g|^2}{h}}\sigma^2|\nabla u|^2dx   \nonumber \\
&\leq&\int_{\Omega\cap B_{R_1}}C_1Me^{-\frac{|x-x_g|^2}{h}}\sigma^2|u|\cdot|\nabla u|dx+C_1M\int_{\Omega\cap B_{R_1}}|\sigma||\nabla\sigma|e^{-\frac{|x-x_g|^2}{h}} |u|^2dx  \nonumber \\
&+& C_1M\cdot\frac{R_1}{h}\int_{\Omega\cap B_{R_1}}e^{-\frac{|x-x_g|^2}{h}}\sigma^2 |u|^2dx+\int_{\Omega\cap B_{R_1}}2|\sigma||\nabla\sigma|e^{-\frac{|x-x_g|^2}{h}}|u|\cdot|\nabla u|dx \nonumber \\
&+&\int_{\Omega\cap B_{R_1}}2\frac{R_1}{h}e^{-\frac{|x-x_g|^2}{h}}\sigma^2 |u|\cdot|\nabla u|dx,
\end{eqnarray*}
  where $C_1$ is a positive constant depending only on $\Omega$. Using Cauchy's inequality in the above inequality
  gives
\begin{eqnarray*}
&&\frac{1}{2}\frac{d}{dt}\int_{\Omega\cap B_{R_1}}|\sigma u|^2\cdot e^{-\frac{|x-x_g|^2}{h}}dx+\int_{\Omega\cap B_{R_1}}e^{-\frac{|x-x_g|^2}{h}}|\sigma\nabla u|^2dx   \nonumber \\
&\leq&\frac{1}{2}\int_{\Omega\cap B_{R_1}}e^{-\frac{|x-x_g|^2}{h}}|\sigma\nabla u|^2dx+8\int_{\Omega\cap B_{R_1}}e^{-\frac{|x-x_g|^2}{h}}|\nabla\sigma|^2|u|^2dx \nonumber\\
&+&C_1(M^2+\frac{R_1^2}{h^2})\int_{\Omega\cap B_{R_1}}|\sigma u|^2\cdot e^{-\frac{|x-x_g|^2}{h}}dx
,\;\;
t\in[0,T].
\end{eqnarray*}
Moving the term $\frac{1}{2}\int_{\Omega\cap B_{R_1}}e^{-\frac{|x-x_g|^2}{h}}|\sigma\nabla u|^2dx$  to the left hand side in the above, using  \eqref{4.4dd} and \eqref{4.5aa}, we  deduce
that when $t\in[0,T]$,
\begin{eqnarray*}
&&\frac{d}{dt}\int_{\Omega\cap B_{R_1}}|\sigma u|^2\cdot e^{-\frac{|x-x_g|^2}{h}}dx\leq
C_2\int_{\Omega\cap (B_{R_1}\setminus B_{(1+\delta_g/2)R_g})}e^{-\frac{|x-x_g|^2}{h}}|u|^2dx  \nonumber \\
&+&C_1(M^2+\frac{R_1^2}{h^2})\int_{\Omega\cap B_{R_1}}|\sigma u|^2\cdot e^{-\frac{|x-x_g|^2}{h}}dx, \nonumber \\
&\leq& C_2e^{-\frac{(1+\delta_g/2)^2R_g^2}{h}}\int_{\Omega\cap B_{R_1}}|u|^2dx
+C_1(M^2+\frac{R_1^2}{h^2})\int_{\Omega\cap B_{R_1}}|\sigma u|^2\cdot e^{-\frac{|x-x_g|^2}{h}}dx,
\end{eqnarray*}
where $C_2>1$ is a constant  depending only  on $R_g$ and $\delta_g$.
 Multiplying the above inequality by $\exp\big(-C_1(M^2+\frac{R_1^2}{h^2})t\big)$ and then integrating it between
  $t$ and $T$,  we obtain
\begin{eqnarray*}
&&\int_{\Omega\cap B_{R_1}}|\sigma u(x,T)|^2\cdot e^{-\frac{|x-x_g|^2}{h}}dx \nonumber \\
&\leq&\exp\big(C_1(M^2+\frac{R_1^2}{h^2})(T-t)\big)\int_{\Omega\cap B_{R_1}}|\sigma u(x,t)|^2\cdot e^{-\frac{|x-x_g|^2}{h}}dx  \nonumber \\
&+& C_2e^{-\frac{(1+\delta_g/2)^2R_g^2}{h}}\exp\big(C_1(M^2+\frac{R_1^2}{h^2})(T-t)\big)\int_t^T\int_{\Omega\cap B_{R_1}}|u(x,s)|^2dxds,\;\;t\in[0,T],
\end{eqnarray*}
from which and \eqref{3.8bb}, it follows that when $t\in[0,T]$,
\begin{eqnarray}\label{4.4cc}
&&\int_{\Omega\cap B_{R_g}}|u(x,T)|^2dx \leq e^{\frac{R_g^2}{h}}\int_{\Omega\cap B_{R_g}}|\sigma u(x,T)|^2\cdot e^{-\frac{|x-x_g|^2}{h}}dx \crr\disp
&\leq &e^{\frac{R_g^2}{h}}\exp\big(C_1(M^2+\frac{R_1^2}{h^2})(T-t)\big)\int_{\Omega\cap B_{R_1}}|u(x,t)|^2dx  \crr\disp
&+& C_2e^{\frac{-(1+\delta_g/2)^2R_g^2+R_g^2}{h}}\exp\big(C_1(M^2+\frac{R_1^2}{h^2})(T-t)\big)\int_t^T\int_{\Omega\cap B_{R_1}}|u(x,s)|^2dxds.
\end{eqnarray}

Next, we let
\begin{eqnarray}\label{4.4bb}
l:=\frac{\delta_g+\delta_g^2/4}{2C_1(1+\delta_g)^2}.
\end{eqnarray}
Then we have
\begin{eqnarray}\label{4.5}
\frac{C_1lR_1^2}{h}={\frac{(1+\delta_g/2)^2R_g^2-R_g^2}{2h}}.
\end{eqnarray}
Choosing $h$ sufficiently small so that $0<lh<\min\{1,\frac{T}{2}\}$, and then using \eqref{4.4cc}, we see that  when $\frac{T}{2}<T-lh\leq t\leq T$,
\begin{eqnarray*}
&&\int_{\Omega\cap B_{R_g}}|u(x,T)|^2dx \leq e^{\frac{R_g^2}{h}}\exp\big(C_1(M^2+\frac{R_1^2}{h^2})lh\big)\int_{\Omega\cap B_{R_1}}|u(x,t)|^2dx  \nonumber \\
&+& C_2e^{\frac{-(1+\delta_g/2)^2R_g^2+R_g^2}{h}}\exp\big(C_1(M^2+\frac{R_1^2}{h^2})lh\big)\int_t^T\int_{\Omega\cap B_{R_1}}|u(x,s)|^2dxds.
\end{eqnarray*}
This, together with (\ref{4.5}) and \eqref{3.9,1-25}, shows that  when $\frac{T}{2}<T-lh\leq t\leq T$,
\begin{eqnarray}\label{3.12aa}
\int_{\Omega\cap B_{R_g}}|u(x,T)|^2dx &\leq &e^{{\frac{(1+\delta_g/2)^2R_g^2+R_g^2}{2h}}}\exp\big(C_1M^2lh\big)\int_{\Omega\cap B_{R_1}}|u(x,t)|^2dx  \nonumber \\
&+& C_2e^{\frac{-(1+\delta_g/2)^2R_g^2+R_g^2}{2h}}\exp\big(C_1M^2lh\big)\mathbb{E}.
\end{eqnarray}
Meanwhile, since $\int_{\Omega\cap B_{R_g}}|u(x,T)|^2dx\neq0$,  we have $u(\cdot,T)\neq0$.
This, along with  the continuity  of $u$ and \eqref{3.9,1-25}, shows that
\begin{eqnarray}\label{3.16,1-25}
\mathbb{E}>0.
\end{eqnarray}

Now, we take
\begin{eqnarray}\label{4.4aa}
h:=\frac{R_g^2(\delta_g+\delta_g^2/4)/2}{\ln\bigg( \mathcal{K}\frac{C_2\exp\big((C_1+L_1)M^2T\big)\mathbb{E}}{\frac{1}{e}\int_{\Omega\cap B_{R_g}}|u(x,T)|^2dx}\bigg)},
\end{eqnarray}
where $\mathcal{K}=e^{(R_g^2(\delta_g+\delta_g^2/4)/2)(\frac{2}{T}+1) l}$ and $L_1$ is given by Theorem \ref{proposition3.2}.
  One can directly check from \eqref{4.4aa}, \eqref{3.9,1-25}, \eqref{3.8aaa}, and $C_2>1$  that $0<lh<\min\{1,\frac{T}{2}\}$ and
 \begin{eqnarray}\label{3.18,1-25}
 C_2e^{\frac{-(1+\delta_g/2)^2R_g^2+R_g^2}{2h}}\exp\big(C_1M^2T\big)\mathbb{E}\leq\frac{1}{e}\int_{\Omega\cap B_{R_g}}|u(x,T)|^2dx.
\end{eqnarray}
Then by  \eqref{3.18,1-25}, and \eqref{3.12aa},
we find that
when
 $\frac{T}{2}<T-lh\leq t\leq T$,
\begin{eqnarray*}
(1-\frac{1}{e})\int_{\Omega\cap B_{R_g}}|u(x,T)|^2dx\leq e^{{\frac{(1+\delta_g/2)^2R_g^2+R_g^2}{2h}}}\exp\big(C_1M^2T\big)\int_{\Omega\cap B_{R_1}}|u(x,t)|^2dx,
\end{eqnarray*}
which, along with \eqref{3.16,1-25},  \eqref{3.18,1-25}, and $C_2>1$, leads to
\begin{eqnarray}\label{4.3aa}
0<\mathbb{E}\leq e^{\frac{(1+\delta_g/2)^2R_g^2}{h}}\int_{\Omega\cap B_{R_1}}|u(x,t)|^2dx,\;\;
\frac{T}{2}<T-lh\leq t\leq T.
\end{eqnarray}
Let  $\theta:=lh$. Then it follows from (\ref{4.4bb}), (\ref{4.4aa}), and (\ref{4.3aa})
 that when $\frac{T}{2}<T-\theta\leq t\leq T$,
\begin{eqnarray*}
0<\frac{\mathbb{E}}{\int_{\Omega\cap B_{R_1}}|u(x,t)|^2dx}\leq e^{\frac{(1+\delta_g/2)^2R_g^2}{h}}= e^{\frac{(\delta_g+\delta_g^2/4)(1+\delta_g/2)^2R_g^2}{2C_1(1+\delta_g)^2}\frac{1}{\theta}};
\end{eqnarray*}
\begin{eqnarray*}
\frac{1}{\theta}=\frac{4C_1(1+\delta_g)^2}{R_g^2(\delta_g+\delta_g^2/4)^2}\ln\big( eC_2\exp\big((C_1+L_1)M^2T\big)e^{(\frac{2}{T}+1)\frac{R_g^2(\delta_g+\delta_g^2/2)^2}{4C_1(1+\delta_g)^2}}\frac{\mathbb{E}}{\int_{\Omega\cap B_{R_g}}|u(x,T)|^2dx}\bigg).
\end{eqnarray*}
These lead to \eqref{3.7,1-25} and \eqref{3.8,1-25}.
Thus we  complete the proof.
\end{proof}
\begin{remark}\label{remark4.1}
 Lemma \ref{theta} implies that if $\int_{\Omega\cap B_{R_g}}|u(x,T)|^2dx\neq0$, then $\int_{\Omega\cap B_{(1+\delta_g)R_g}}|u(x,t)|^2dx\neq0$, for any $T-\theta\leq t\leq T$, where $\theta$ is  given by \eqref{3.8,1-25}.
\end{remark}

 The following lemma is quoted from \cite{EscauriazaF1, PW} (see, for instance, (2.3.15) on page 691 in \cite{PW}).
\begin{lemma}\label{lemma 4.3}
Let $G$ be a bounded domain in $\mathbb{R}^n$ with a $C^2$ boundary $\partial G$. Let $x_0\in G$.
Then, for each $f \in H_0^1(G)$  and each  $\lambda>0$,
\begin{eqnarray*}\label{4.28}
\int_G\frac{|x-x_0|^2}{8\lambda}|f(x)|^2e^{-\frac{|x-x_0|^2}{4\lambda}}dx\leq2\lambda\int_G|\nabla f(x)|^2e^{-\frac{|x-x_0|^2}{4\lambda}}dx+\frac{n}{2}\int_G|f(x)|^2e^{-\frac{|x-x_0|^2}{4\lambda}}dx.
\end{eqnarray*}
\end{lemma}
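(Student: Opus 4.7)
The proof proposal is to exploit the identity
$\nabla G_\lambda(x) = -\tfrac{x-x_0}{2\lambda}\,G_\lambda(x)$,
where $G_\lambda(x):=e^{-|x-x_0|^2/(4\lambda)}$, which immediately gives
$\operatorname{div}\!\bigl((x-x_0)G_\lambda\bigr) = nG_\lambda - \tfrac{|x-x_0|^2}{2\lambda}\,G_\lambda$.
Rearranging produces the pointwise identity
$\tfrac{|x-x_0|^2}{2\lambda}\,G_\lambda = nG_\lambda - \operatorname{div}\!\bigl((x-x_0)G_\lambda\bigr)$,
which is the algebraic backbone of the inequality.

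Next, multiply this identity by $\tfrac{1}{4}|f(x)|^2$ and integrate over $G$. The second term on the right becomes $-\tfrac{1}{4}\int_G |f|^2 \operatorname{div}\!\bigl((x-x_0)G_\lambda\bigr)\,dx$, and since $f \in H_0^1(G)$ and $\partial G$ is $C^2$, integration by parts (with no boundary contribution) rewrites it as $\tfrac{1}{2}\int_G f\,\nabla f\cdot(x-x_0)\,G_\lambda\,dx$. This yields the intermediate equality
\begin{equation*}
\int_G \frac{|x-x_0|^2}{8\lambda}|f|^2\,G_\lambda\,dx
\;=\; \frac{n}{4}\int_G |f|^2\,G_\lambda\,dx
\;+\; \frac{1}{2}\int_G f\,\nabla f\cdot(x-x_0)\,G_\lambda\,dx .
\end{equation*}

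Finally, apply Young's inequality $ab\le \varepsilon a^2 + \tfrac{b^2}{4\varepsilon}$ to the cross term, tuned so that the $|f|^2|x-x_0|^2$ piece can be absorbed into the left-hand side. Concretely, with $a=|f||x-x_0|$, $b=|\nabla f|$, and $\varepsilon = \tfrac{1}{16\lambda}$, one gets
$\tfrac{1}{2}|f||\nabla f||x-x_0| \le \tfrac{|f|^2|x-x_0|^2}{16\lambda} + \lambda|\nabla f|^2$
pointwise. Plugging this in, the $\tfrac{|f|^2|x-x_0|^2}{16\lambda}$ integral moves to the left, halving the coefficient $\tfrac{1}{8\lambda}$ into $\tfrac{1}{16\lambda}$; multiplying the resulting inequality by $2$ restores the $\tfrac{1}{8\lambda}$ on the left and produces the claimed constants $2\lambda$ and $\tfrac{n}{2}$ on the right.

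There is no serious obstacle here — the result is essentially a weighted Hardy inequality, and the whole argument reduces to one integration by parts against the Gaussian weight plus one Young inequality. The only point requiring a small amount of care is justifying the integration by parts for $f \in H_0^1(G)$: this is standard by approximating $f$ by $C_c^\infty(G)$ functions and passing to the limit, using the smoothness and boundedness of the weights $G_\lambda$ and $(x-x_0)G_\lambda$ on $G$. Since this lemma is quoted from \cite{EscauriazaF1, PW}, the authors most likely omit the details, and the proposal above records the computation in full.
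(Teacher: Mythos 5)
Your proof is correct. Note that the paper does not prove this lemma at all: it is quoted from \cite{EscauriazaF1,PW} (see (2.3.15) on p.~691 of \cite{PW}), so there is no in-paper argument to compare with; your derivation --- the divergence identity $\operatorname{div}\bigl((x-x_0)G_\lambda\bigr)=nG_\lambda-\frac{|x-x_0|^2}{2\lambda}G_\lambda$ for $G_\lambda=e^{-|x-x_0|^2/(4\lambda)}$, one integration by parts with no boundary term (justified by approximating $f\in H_0^1(G)$ by $C_c^\infty(G)$ functions), and an absorption via Young's inequality --- is exactly the standard route and yields the stated constants. Two small remarks. First, a bookkeeping slip: with $\varepsilon=\frac{1}{16\lambda}$ in $ab\le\varepsilon a^2+\frac{b^2}{4\varepsilon}$ one literally gets $\frac12 ab\le\frac{a^2}{32\lambda}+2\lambda b^2$; the pointwise bound you actually use, $\frac12|f||\nabla f||x-x_0|\le\frac{|f|^2|x-x_0|^2}{16\lambda}+\lambda|\nabla f|^2$, is nevertheless true (it is the arithmetic--geometric mean inequality, i.e.\ the choice $\varepsilon=\frac{1}{8\lambda}$ applied to $\frac12 ab$), so the absorption and the final constants $2\lambda$ and $\frac{n}{2}$ come out as claimed. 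Second, moving the term $\int_G\frac{|x-x_0|^2}{16\lambda}|f|^2e^{-\frac{|x-x_0|^2}{4\lambda}}dx$ to the left requires its finiteness, which is immediate here because $G$ is bounded and $f\in L^2(G)$; it is worth saying this in one line.
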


Now, we are in the position to prove
Theorem \ref{lemma4.4}.

\begin{proof}[Proof of Theorem \ref{lemma4.4}]
Arbitrarily fix $r\in (0,R_g)$ with $B_r:=B(x_g,r)\subset\Omega$.
Without loss of generality, we can assume  that $\int_{\Omega\cap B_{R_g}}|u(x,T)|^2dx\neq0$.
Let $\theta$ be given  in Lemma \ref{theta} and $R_0:=(1+2\delta_g)R_g$.
Let $\sigma_0\in C_0^\infty(\mathbb{R}^n)$ satisfy that
\begin{eqnarray}\label{truncation}
\textrm{supp}\; \sigma_0\subset B_{R_0},\; 0\leq\sigma_0\leq1,\;\;\mbox{and}\;\;\sigma_0=1  \;\;\mbox{on}\;\;  B_{(1+3\delta_g/2)R_g}.
\end{eqnarray}
Write  $\phi:=\sigma_0\cdot u$.
 We organize the rest of the  proof in several steps.\\

\noindent{\it Step 1.  We present several observations on the truncated function  $\phi$.}

Observation One: By direct  computations, we find
\begin{eqnarray}\label{cut}
\phi_t-\triangle\phi=-\nabla\phi\cdot\nabla v-\phi\triangle v+u\nabla v\cdot\nabla\sigma_0-2\nabla u\cdot\nabla\sigma_0-u\triangle\sigma_0,\;\;x\in\Omega,\; t\in[0,T].
\end{eqnarray}

Observation Two: If we set
\begin{eqnarray}\label{3.24,1.30}
\psi:=u\nabla v\cdot\nabla\sigma_0-2\nabla u\cdot\nabla\sigma_0-u\triangle\sigma_0,\;\;x\in\Omega,\; t\in[0,T],
\end{eqnarray}
then we have
\begin{eqnarray}\label{4.12}
\psi(x,t)=0,\: \;\mbox{when}\;\; x\in B_{(1+3\delta_g/2)R_g}, \; t\in[0, T];
\end{eqnarray}
\begin{eqnarray}\label{4.13aa}
&&\int_{\Omega\cap B_{R_0}} |\psi(x,t)|^2dx\leq C_1M^2(1+t^{-1})e^{L_1M^2t}\int_{\Omega} |u(x,0)|^2dx,
\; t\in(0,T],
\end{eqnarray}
where $L_1$ is given in Theorem \ref{proposition3.2} and where $C_1>0$ is a constant (depending only on $\Omega, R_g$, and $\delta_g$).
Indeed,  \eqref{4.12} follows directly from \eqref{3.24,1.30} and \eqref{truncation}.
To show \eqref{4.13aa}, we first notice that by \eqref{truncation},  there is $C:=C(R_g,\delta_g)$ so that
\begin{eqnarray}\label{sigma}
|\triangle\sigma_0(x)|\leq C(R_g,\delta_g)\;\;\mbox{and}\;\;  |\nabla\sigma_0(x)|\leq C(R_g,\delta_g),\;\;x\in\mathbb{R}^n;
\end{eqnarray}
 we then use \eqref{3.24,1.30} and Cauchy's inequality  to get that when $t\in [0,T]$,
\begin{eqnarray*}
&&\int_{\Omega\cap B_{R_0}} |\psi(x,t)|^2dx=\int_{\Omega\cap B_{R_0}} \big|\big[u\nabla v\cdot\nabla\sigma_0-2\nabla u\cdot\nabla\sigma_0-u\triangle\sigma_0\big](x,t)\big|^2dx\crr\disp
&\leq&\int_{\Omega\cap B_{R_0}} \big[(|u|^2+|\nabla u|^2+|u|^2)(x,t)\big]\big[(|\nabla v\cdot\nabla\sigma_0|^2+4|\nabla\sigma_0|^2+|\triangle\sigma_0|^2)(x,t)\big]dx;
\end{eqnarray*}
we finally use the above inequality, \eqref{4.3bb}, \eqref{sigma} and  Theorem \ref{proposition3.2}
to get \eqref{4.13aa}.

Observation Three: Taking $z=\phi$ in \eqref{N}, we see from Lemma \ref{lemma4.1} that
\begin{eqnarray}\label{4.8}
&&\frac{1}{2}\frac{d}{dt}\int_{\Omega\cap B_{R_0}}|\phi(x,t)|^2G_\lambda(x,t)dx+N_{\lambda}(t)\int_{\Omega\cap B_{R_0}}|\phi(x,t)|^2G_\lambda(x,t)dx\nonumber \\
&=&\int_{\Omega\cap B_{R_0}} \phi(\partial_t \phi(x,t)-\triangle \phi(x,t))G_{\lambda}(x,t)dx,\; \lambda>0,\; t\in[T-\theta,T]
\end{eqnarray}
and
\begin{eqnarray}\label{4.9}
\frac{d}{dt}N_{\lambda} (t)\leq{1\over T-t+\lambda}N_{\lambda}(t)+{\int_{\Omega\cap B_{R_0}} (\partial_t \phi(x,t)-\triangle \phi(x,t))^2G_{\lambda}dx\over \int_{\Omega\cap B_{R_0}}|\phi(x,t)|^2G_\lambda(x,t)dx},
\; \lambda>0,\; t\in[T-\theta,T].
\end{eqnarray}
To show these, we arbitrarily fix $\lambda>0$. We claim
\begin{eqnarray}\label{3.29,1-30}
\int_{\Omega\cap B_{R_0}}|\phi(x,t)|^2G_\lambda(x,t)dx\neq0,\;\;t\in[T-\theta,T].
\end{eqnarray}
Indeed, by \eqref{truncation}, the fact that  $\phi:=\sigma_0\cdot u$, and $R_0:=(1+2\delta_g)R_g$,
 we can easily see
\begin{eqnarray}\label{3.25}
\int_{\Omega\cap B_{R_0}}|\phi(x,t)|^2G_\lambda(x,t)dx\geq\int_{\Omega\cap B_{(1+\delta_g)R_g}}|u(x,t)|^2G_\lambda(x,t)dx,\;\;t\in[0,T].
\end{eqnarray}
Since $\int_{\Omega\cap B_{R_g}}|u(x,T)|^2dx\neq0$, it follows from
   Remark \ref{remark4.1} and
   \eqref{G}
   that when $t\in[T-\theta, T]$,
\begin{eqnarray*}
\int_{\Omega\cap B_{(1+\delta_g)R_g}}|u(x,t)|^2G_\lambda(x,t)dx\geq\frac{1}{(T-t+\lambda)^{n/2}}e^{-\frac{(1+\delta_g)^2R_g^2}{4(T-t+\lambda)}}\int_{\Omega\cap B_{(1+\delta_g)R_g}}|u(x,t)|^2dx>0,
\end{eqnarray*}
 which, together with \eqref{3.25}, leads to
 \eqref{3.29,1-30}.
 Next,  by \eqref{3.29,1-30}, we can use Lemma \ref{lemma4.1}, where
 $z$ is replaced by $\phi$, to get \eqref{4.8} and \eqref{4.9}.

\noindent{\it Step 2.  We show that for each $\lambda>0$ and $\varepsilon\in(0,\theta)$,
\begin{eqnarray}\label{estimate3}
\frac{\int_{\Omega\cap B_{R_0}} |\psi(x,t)|^2G_{\lambda}(x,t)dx}{\int_{\Omega\cap B_{R_0}}|\phi(x,t)|^2G_\lambda(x,t)dx}\leq 2C_1M^2(1+\frac{1}{T})e^{L_1M^2T}e^{\frac{L_2}{\theta}}e^{-\frac{L_6}{\varepsilon+\lambda}},
\;\;t\in[T-\varepsilon,T],
\end{eqnarray}
where
\begin{eqnarray}\label{3.32;1.30}
L_6:=-\frac{(1+\delta_g)^2R_g^2}{4}+\frac{(1+3\delta_g/2)^2R_g^2}{4},
\end{eqnarray}
and where $L_2$ is given in Theorem \ref{proposition3.2}.}

To this end, we arbitrarily fix $\lambda>0$ and $\varepsilon\in(0,\theta)$.
By  \eqref{3.8,1-25}, we have  $0<\varepsilon<\theta<\min\{1,\frac{T}{2}\}$,
from which, it follows that
 \begin{eqnarray}\label{3.27}
t^{-1}\leq\frac{2}{T},  \: \textrm{ when } t\in[T-\varepsilon,T].
\end{eqnarray}
Meanwhile, by   \eqref{4.12} and \eqref{3.25},  we see that when $t\in[T-\varepsilon,T]$,
\begin{eqnarray*}
\frac{\int_{\Omega\cap B_{R_0}} |\psi(x,t)|^2G_{\lambda}(x,t)dx}{\int_{\Omega\cap B_{R_0}}|\phi(x,t)|^2G_\lambda(x,t)dx}
\leq
 \frac{\int_{\Omega\cap (B_{R_0}\setminus B_{(1+3\delta_g/2)R_g})} |\psi(x,t)|^2G_{\lambda}(x,t)dx}{\int_{\Omega\cap  B_{(1+\delta_g)R_g}}|u(x,t)|^2G_{\lambda}(x,t)dx},
\end{eqnarray*}
which, along with \eqref{G} and \eqref{3.32;1.30}, yields  that when $t\in[T-\varepsilon,T]$,
\begin{eqnarray*}
&&\frac{\int_{\Omega\cap B_{R_0}} |\psi(x,t)|^2G_{\lambda}(x,t)dx}{\int_{\Omega\cap B_{R_0}}|\phi(x,t)|^2G_\lambda(x,t)dx}\leq \frac{\int_{\Omega\cap (B_{R_0}\setminus B_{(1+3\delta_g/2)R_g})} |\psi(x,t)|^2dx}{\int_{\Omega\cap  B_{(1+\delta_g)R_g}}|u(x,t)|^2dx}e^{-\frac{L_6}{T-t+\lambda}}.
\end{eqnarray*}
This, together with  \eqref{4.13aa}, shows that when $t\in[T-\varepsilon,T]$,
\begin{eqnarray}\label{3.33,1-20}
\frac{\int_{\Omega\cap B_{R_0}} |\psi(x,t)|^2G_{\lambda}(x,t)dx}{\int_{\Omega\cap B_{R_0}}|\phi(x,t)|^2G_\lambda(x,t)dx}
\leq \frac{C_1M^2(1+t^{-1})e^{L_1M^2t}\int_{\Omega} |u(x,0)|^2dx}{\int_{\Omega\cap B_{(1+\delta_g)R_g}}|u(x,t)|^2dx}e^{-\frac{L_6}{\varepsilon+\lambda}}.
\end{eqnarray}
Now, by \eqref{3.33,1-20}, Lemma \ref{theta}, and \eqref{3.27},
we see that \eqref{estimate3} holds for all $t\in[T-\varepsilon,T]$.

\noindent{\it Step 3. We
show that for any $\lambda>0$ and $\varepsilon\in(0,\theta)$,
\begin{eqnarray}\label{4.24aa}
&&\lambda N_{\lambda}(T)\leq\frac{2(\lambda+\varepsilon)}{\varepsilon}\cdot\exp(C_2M^2\varepsilon) \crr\disp
&\times&\big[\frac{C_3}{2}M^2\varepsilon+\frac{C_2}{2}M^2\varepsilon^2
+\frac{\varepsilon}{2}Q(\theta,\varepsilon, \lambda)+\frac{(1+\delta_g)^2R_g^2}{2\varepsilon}+L_1M^2T+\frac{L_2}{\theta}\big],
\end{eqnarray}
where   $C_2$, $C_3$ are positive constants depending only on $\Omega$, and where
\begin{eqnarray}\label{3.34}
Q(\theta,\varepsilon, \lambda):=6C_1M^2(1+\frac{1}{T})e^{L_1M^2T}e^{\frac{L_2}{\theta}}
 e^{-\frac{L_6}{\varepsilon+\lambda}}(1+\varepsilon).
\end{eqnarray}
}

Arbitrarily fix $\lambda>0$ and $\varepsilon\in(0,\theta)$.
It follows from (\ref{cut}), (\ref{4.9}), and the second equation of (\ref{1.1}) that when $t\in[T-\varepsilon,T]$,
\begin{eqnarray*}
\frac{d}{dt}N_{\lambda}(t)\leq
{1\over T-t+\lambda}N_{\lambda}(t)+{\int_{\Omega\cap B_{R_0}} \big[(-\nabla\phi\cdot\nabla v-\phi(av-bu)+\psi)(x,t)\big]^2G_{\lambda}(x,t)dx\over \int_{\Omega\cap B_{R_0}}|\phi(x,t)|^2G_\lambda(x,t) dx}.
\end{eqnarray*}
This, along with Cauchy's inequality,  \eqref{M},  and (\ref{4.3bb}), yields that when $t\in[T-\varepsilon,T]$,
\begin{eqnarray*}
\frac{d}{dt}N_{\lambda}(t)\!\!&\leq&\!\!
{1\over T-t+\lambda}N_{\lambda}(t)+{\int_{\Omega\cap B_{R_0}} 3\big(|\nabla\phi|^2|\nabla v|^2+|\phi|^2|(av-bu)|^2+|\psi|^2\big)(x,t) G_{\lambda}(x,t)dx\over \int_{\Omega\cap B_{R_0}}|\phi(x,t)|^2G_\lambda(x,t)dx}\crr\disp
&\leq&({1\over T-t+\lambda}+C_2M^2)N_{\lambda}(t)+C_2M^2+3{\int_{\Omega\cap B_{R_0}} |\psi(x,t)|^2G_{\lambda}(x,t)dx\over \int_{\Omega\cap B_{R_0}}|\phi(x,t)|^2G_\lambda(x,t)dx},
\end{eqnarray*}
for some  $C_2>0$  depending only on $\Omega$.
It, together with \eqref{estimate3}, implies that when $t\in[T-\varepsilon,T]$,
\begin{eqnarray*}
\frac{d}{dt}N_{\lambda} (t)-({1\over T-t+\lambda}+C_2M^2)N_{\lambda}(t)\leq C_2M^2+6C_1M^2(1+\frac{1}{T})e^{L_1M^2T}e^{\frac{L_2}{\theta}}e^{-\frac{L_6}{\varepsilon+\lambda}}.
\end{eqnarray*}
Multiplying the above  by $\exp(\ln(T -t+\lambda)-C_2M^2t)$, and then integrating it over $(t, T)$, we obtain
\begin{eqnarray*}
&&\lambda \exp(-C_2M^2T)N_{\lambda}(T)\leq (T-t+\lambda)\exp(-C_2M^2t)N_{\lambda}(t)  \nonumber \\
&+&\big(C_2M^2+6C_1M^2(1+\frac{1}{T})e^{L_1M^2T}e^{\frac{L_2}{\theta}}e^{-\frac{L_6}{\varepsilon+\lambda}}\big)
\int_{t}^{T}e^{-C_2M^2s}(T-s+\lambda)ds,\;\; t\in[T-\varepsilon,T].
\end{eqnarray*}
Dividing both sides of the above by $\exp(-C_2M^2T)$, we see that when $t\in[T-\varepsilon,T]$,
\begin{eqnarray*}
&&\lambda N_{\lambda}(T)\leq (T-t+\lambda)\exp\big(C_2M^2(T-t)\big)N_{\lambda}(t)  \nonumber \\
&+&\big(C_2M^2+6C_1M^2(1+\frac{1}{T})e^{L_1M^2T}e^{\frac{L_2}{\theta}}e^{-\frac{L_6}{\varepsilon+\lambda}}\big)
\int_{t}^{T}e^{C_2M^2(T-s)}(T-s+\lambda)ds \nonumber \\
&\leq& (\lambda+\varepsilon)\exp\big(C_2M^2\varepsilon\big)\big[N_{\lambda}(t)+
\big(C_2M^2+6C_1M^2(1+\frac{1}{T})e^{L_1M^2T}e^{\frac{L_2}{\theta}}e^{-\frac{L_6}{\varepsilon+\lambda}}\big)\varepsilon\big].
\end{eqnarray*}
 This implies that  when $t\in[T-\varepsilon,T]$,
\begin{eqnarray}\label{estimate1}
\frac{\lambda}{\lambda+\varepsilon}\exp(-C_2M^2\varepsilon) N_{\lambda}(T)-\big(C_2M^2+6C_1M^2(1+\frac{1}{T})e^{L_1M^2T}e^{\frac{L_2}{\theta}}e^{-\frac{L_6}{\varepsilon+\lambda}}\big)\varepsilon\leq N_{\lambda}(t).
\end{eqnarray}
Meanwhile, it follows from (\ref{cut}) and (\ref{4.8}) that
\begin{eqnarray*}
&&\frac{1}{2}\frac{d}{dt}\int_{\Omega\cap B_{R_0}}|\phi(x,t)|^2G_\lambda(x,t)dx+N_{\lambda}(t)\int_{\Omega\cap B_{R_0}}|\phi(x,t)|^2G_\lambda(x,t) dx\crr\disp
&=&\int_{\Omega\cap B_{R_0}} \phi(x,t)\big[-\nabla\phi\cdot\nabla v-\phi\triangle v+\psi\big](x,t)G_{\lambda}(x,t)dx,\;\;t\in[T-\varepsilon,T].
\end{eqnarray*}
This, along with  the second equation of (\ref{1.1}),  (\ref{4.3bb}), \eqref{M}, and Cauchy's inequality, indicates that when $t\in[T-\varepsilon,T]$,
\begin{eqnarray*}
&&\frac{1}{2}\frac{d}{dt}\int_{\Omega\cap B_{R_0}}|\phi(x,t)|^2G_\lambda(x,t)dx+N_{\lambda}(t)\int_{\Omega\cap B_{R_0}}|\phi(x,t)|^2G_\lambda(x,t)dx\crr\disp
&\leq&\frac{1}{2}\int_{\Omega\cap B_{R_0}}|\nabla\phi(x,t)|^2G_{\lambda}(x,t)dx+\frac{1}{2}\int_{\Omega\cap B_{R_0}}|\phi(x,t)\nabla v(x,t)|^2G_{\lambda}(x,t)dx\crr\disp
&+&\int_{\Omega\cap B_{R_0}} \big[\phi^2(bu-av)+\phi\psi\big](x,t)G_{\lambda}(x,t)dx\crr\disp
&\leq&\bigg(\frac{1}{2}N_{\lambda}(t)+C_3M^2\bigg)\int_{\Omega\cap B_{R_0}}|\phi(x,t)|^2G_\lambda(x,t)dx+\int_{\Omega\cap B_{R_0}} |\psi(x,t)|^2G_{\lambda}(x,t)dx,
\end{eqnarray*}
where  $C_3$ is a positive constant  depending only on $\Omega$.
Then, after some computations, we obtain that when $t\in[T-\varepsilon,T]$,
\begin{eqnarray}\label{3.38,,1.30}
&&\frac{1}{2}\frac{d}{dt}\int_{\Omega\cap B_{R_0}}|\phi(x,t)|^2G_\lambda(x,t)dx+\frac{1}{2}N_{\lambda}(t)\int_{\Omega\cap B_{R_0}}|\phi(x,t)|^2G_\lambda(x,t)dx\nonumber \\
&\leq&\big(C_3M^2+\frac{\int_{\Omega\cap B_{R_0}} |\psi(x,t)|^2G_{\lambda}(x,t)dx}{\int_{\Omega\cap B_{R_0}}|\phi(x,t)|^2G_\lambda(x,t)dx}\big)\int_{\Omega\cap B_{R_0}}|\phi(x,t)|^2G_\lambda(x,t)dx.
\end{eqnarray}
Thus,  by \eqref{3.38,,1.30},  (\ref{estimate1}), and  (\ref{estimate3}),
we find that  when $t\in[T-\varepsilon,T]$,
\begin{eqnarray*}
&&\frac{d}{dt}\int_{\Omega\cap B_{R_0}}|\phi(x,t)|^2G_\lambda(x,t)dx+\frac{\lambda}{\lambda+\varepsilon}\exp(-C_2M^2\varepsilon) N_{\lambda}(T)\int_{\Omega\cap B_{R_0}}|\phi(x,t)|^2G_\lambda(x,t)dx\nonumber \\
&\leq&\!\!\!\!\big(C_3M^2\!+\!C_2M^2\varepsilon
+6C_1M^2(1+\frac{1}{T})e^{L_1M^2T}e^{\frac{L_2}{\theta}}e^{-\frac{L_6}{\varepsilon+\lambda}}(1+\varepsilon)\big)\!\!\!\int_{\Omega\cap B_{R_0}}\!\!\!|\phi(x,t)|^2G_\lambda(x,t)dx.
\end{eqnarray*}
 This, along with \eqref{3.34}, yields that  when $t\in[T-\varepsilon,T]$,
\begin{eqnarray*}
\frac{d}{dt}\bigg(e^{(\frac{\lambda}{\lambda+\varepsilon}\exp(-C_2M^2\varepsilon) N_{\lambda}(T)-C_3M^2-C_2M^2\varepsilon
-Q(\theta,\varepsilon, \lambda))t}\int_{\Omega\cap B_{R_0}}|\phi(x,t)|^2G_\lambda(x,t)dx\bigg)\leq0.
\end{eqnarray*}
Integrating it over $(T-\varepsilon, T-\frac{\varepsilon}{2})$, we have
\begin{eqnarray*}
&&e^{\frac{\lambda}{\lambda+\varepsilon}\exp(-C_2M^2\varepsilon) N_{\lambda}(T)\frac{\varepsilon}{2}}\int_{\Omega\cap  B_{R_0}}|\phi(x,T-\frac{\varepsilon}{2})|^2G_\lambda(x,T-\frac{\varepsilon}{2})dx\\
&\leq& e^{(C_3M^2+C_2M^2\varepsilon
+Q(\theta,\varepsilon, \lambda))\frac{\varepsilon}{2}}\int_{\Omega\cap B_{R_0}}|\phi(x,T-\varepsilon)|^2G_\lambda(x,T-\varepsilon)dx,
\end{eqnarray*}
from which, it follows that
\begin{eqnarray}\label{4.23}
&&e^{\frac{\varepsilon}{2(\lambda+\varepsilon)}\exp(-C_2M^2\varepsilon) \lambda N_{\lambda}(T)}\nonumber \\
&\leq& e^{(C_3M^2+C_2M^2\varepsilon
+Q(\theta,\varepsilon, \lambda))\frac{\varepsilon}{2}}\frac{\int_{\Omega\cap B_{R_0}}|\phi(x,T-\varepsilon)|^2G_\lambda(x,T-\varepsilon)dx}{\int_{\Omega\cap  B_{R_0}}|\phi(x,T-\frac{\varepsilon}{2})|^2G_\lambda(x,T-\frac{\varepsilon}{2})dx}.
\end{eqnarray}

We next estimate the right hand side of \eqref{4.23}.
One can directly check
\begin{eqnarray}\label{3.32}
\frac{\int_{\Omega\cap B_{R_0}}|\phi(x,T-\varepsilon)|^2G_\lambda(x,T-\varepsilon)dx}{\int_{\Omega\cap  B_{R_0}}|\phi(x,T-\frac{\varepsilon}{2})|^2G_\lambda(x,T-\frac{\varepsilon}{2})dx}&\leq&\frac{\int_{\Omega\cap B_{R_0}}|\phi(x,T-\varepsilon)|^2e^{-\frac{|x-x_g|^2}{4(\varepsilon+\lambda)}}dx}{\int_{\Omega\cap  B_{R_0}}|\phi(x,T-\frac{\varepsilon}{2})|^2e^{-\frac{|x-x_g|^2}{4(\varepsilon/2+\lambda)}}dx}\crr\disp
&\leq&\frac{\int_{\Omega\cap B_{R_0}}|\phi(x,T-\varepsilon)|^2dx}{ e^{-\frac{(1+\delta_g)^2R_g^2}{2\varepsilon}}\int_{\Omega\cap  B_{(1+\delta_g)R_g}}|\phi(x,T-\frac{\varepsilon}{2})|^2dx}.
\end{eqnarray}
At the same time, by \eqref{truncation} and $\phi:=\sigma_0\cdot u$, we get
\begin{eqnarray*}
\frac{\int_{\Omega\cap B_{R_0}}|\phi(x,T-\varepsilon)|^2dx}{\int_{\Omega\cap  B_{(1+\delta_g)R_g}}|\phi(x,T-\frac{\varepsilon}{2})|^2dx}
\leq \frac{\int_{\Omega\cap B_{R_0}}|u(x,T-\varepsilon)|^2dx}{\int_{\Omega\cap B_{(1+\delta_g)R_g}}|u(x,T-\frac{\varepsilon}{2})|^2dx}.
\end{eqnarray*}
Then, by Theorem \ref{proposition3.2},  Lemma \ref{theta} and the fact that $\varepsilon\in(0, \theta)$,
we see that
\begin{eqnarray*}
\frac{\int_{\Omega\cap B_{R_0}}|\phi(x,T-\varepsilon)|^2dx}{\int_{\Omega\cap  B_{(1+\delta_g)R_g}}|\phi(x,T-\frac{\varepsilon}{2})|^2dx}
\leq \frac{e^{L_1M^2T}\int_{\Omega} |u(x,0)|^2dx}{\int_{\Omega\cap B_{(1+\delta_g)R_g}}|u(x,T-\frac{\varepsilon}{2})|^2dx}
\leq e^{L_1M^2T}e^{\frac{L_2}{\theta}}.
\end{eqnarray*}
This, together with \eqref{4.23} and \eqref{3.32}, yields
\begin{eqnarray*}
e^{\frac{\varepsilon}{2(\lambda+\varepsilon)}\exp(-C_2M^2\varepsilon) \lambda N_{\lambda}(T)}\leq e^{(C_3M^2+C_2M^2\varepsilon
+Q(\theta,\varepsilon, \lambda))\frac{\varepsilon}{2}}e^{\frac{(1+\delta_g)^2R_g^2}{2\varepsilon}}e^{L_1M^2T}e^{\frac{L_2}{\theta}},
\end{eqnarray*}
which gives \eqref{4.24aa}.

\noindent{\it Step 4.  We  prove  (\ref{4.8aa}) via tuning  parameters.}\

Let $\varepsilon=k\theta$ and $\lambda=\mu\varepsilon$, where
 $k:=\min\{\frac{L_6}{2L_2, },\frac{1}{2}\}$ and
  $\mu\in(0,1)$ will be determined later. Then we have
   $L_2-\frac{L_6}{k(1+\mu)}<0$. This, together with \eqref{3.34}
    (where $\varepsilon=k\theta$ and $\lambda=\mu\varepsilon$)
    and the fact that $0<\varepsilon<\theta< \min\{ 1, \frac{T}{2}\}$, indicates
\begin{eqnarray*}
Q(\theta,\varepsilon, \lambda)\leq 12C_1M^2(1+\frac{1}{T})e^{L_1M^2T}.
\end{eqnarray*}
Then, by \eqref{4.24aa} and  the fact that $\lambda=\mu\varepsilon$, we see
\begin{eqnarray*}
&&\varepsilon\lambda N_{\lambda}(T)\leq 2(\mu+1)e^{C_2M^2\varepsilon}
\bigg[\frac{C_3}{2}M^2\varepsilon^2+\frac{C_2}{2}M^2\varepsilon^3 \\
&+&6C_1M^2(1+\frac{1}{T})e^{L_1M^2T}\varepsilon^2+\frac{(1+\delta_g)^2R_g^2}{2}+L_1M^2T\varepsilon+\frac{L_2\varepsilon}{\theta}\bigg].
\end{eqnarray*}
Since $\frac{\varepsilon}{\theta}=k$ and  $\varepsilon, k,\mu\in(0,1)$, we have
\begin{eqnarray*}
\varepsilon\lambda N_{\lambda}(T)\leq 4e^{C_2M^2}\!
\bigg[\frac{C_3}{2}M^2\!+\!\frac{C_2}{2}M^2
+6C_1M^2(1+\frac{1}{T})e^{L_1M^2T}\!+\!\frac{(1+\delta_g)^2R_g^2}{2}+L_1M^2T+L_2\bigg].
\end{eqnarray*}
From the above, we can find a  constant $C_0>1$, depending on $\Omega, r, R_g, \delta_g, M$, and $T$, so that
\begin{eqnarray}\label{4.13}
\frac{16\lambda}{r^2}\left(\lambda N_\lambda(T)+\frac{n}{4}\right)=\frac{16\mu\varepsilon}{r^2}\left(\lambda N_\lambda(T)+\frac{n}{4}\right)
\leq \frac{16}{r^2}\mu\left(\varepsilon\lambda N_{\lambda}(T)+\frac{n}{4}\right)\leq \mu C_0.
\end{eqnarray}
Choosing $\mu=\frac{1}{2C_0}\in(0,1)$ in (\ref{4.13}), we obtain
\begin{eqnarray}\label{3.37aa}
\frac{16\lambda}{r^2}\left(\lambda N_\lambda(T)+\frac{n}{4}\right)\leq\frac{1}{2}.
\end{eqnarray}\

Next, since $0<r<R_g$ and $B_r:=B(x_g,r)\subset\Omega$, we find
\begin{eqnarray}\label{4.24}
&&\int_{\Omega\cap B_{R_0}}|\phi(x,T)|^2e^{-\frac{|x-x_g|^2}{4\lambda}}dx\crr\disp
&=&\int_{\Omega\cap B_{R_0}\setminus B_r}|\phi(x,T)|^2e^{-\frac{|x-x_g|^2}{4\lambda}}dx+\int_{B_r}|\phi(x,T)|^2
e^{-\frac{|x-x_g|^2}{4\lambda}}dx\crr\disp
&\leq&\frac{1}{r^2}\int_{\Omega\cap B_{R_0} \setminus B_r}|x-x_g|^2|\phi(x,T)|^2e^{-\frac{|x-x_g|^2}{4\lambda}}dx
+\int_{B_r}|\phi(x,T)|^2e^{-\frac{|x-x_g|^2}{4\lambda}}dx.
\end{eqnarray}
Meanwhile, it follows from Lemma \ref{lemma 4.3} that
\begin{eqnarray*}
&&\int_{\Omega\cap B_{R_0}}|x-x_g|^2|\phi(x,T)|^2e^{-\frac{|x-x_g|^2}{4\lambda}}dx\crr\disp
&\leq& 8\lambda\bigg(2\lambda\int_{\Omega\cap B_{R_0}}|\nabla \phi(x,T)|^2 e^{-\frac{|x-x_g|^2}{4\lambda}}dx+\frac{n}{2}\int_{\Omega\cap B_{R_0}}|\phi(x,T)|^2e^{-\frac{|x-x_g|^2}{4\lambda}}dx\bigg)\crr\disp
&\leq& 8\lambda\bigg(2\lambda N_{\lambda}(T)\int_{\Omega\cap B_{R_0}}|\phi(x,T)|^2e^{-\frac{|x-x_g|^2}{4\lambda}}dx+\frac{n}{2}\int_{\Omega\cap B_{R_0}}|\phi(x,T)|^2e^{-\frac{|x-x_g|^2}{4\lambda}}dx\bigg).
\end{eqnarray*}
Combining the above  with \eqref{4.24} yields
\begin{eqnarray*}
&&\int_{\Omega\cap B_{R_0}}|\phi(x,T)|^2e^{-\frac{|x-x_g|^2}{4\lambda}}dx\crr\disp
&\leq&\frac{16\lambda}{r^2}\big(\lambda N_\lambda(T)+\frac{n}{4}\big)\int_{\Omega\cap B_{R_0}}|\phi(x,T)|^2e^{-\frac{|x-x_g|^2}{4\lambda}}dx+\int_{B_r}|\phi(x,T)|^2e^{-\frac{|x-x_g|^2}{4\lambda}}dx.
\end{eqnarray*}
This, along with \eqref{3.37aa}, implies
\begin{eqnarray}\label{3.39}
\int_{\Omega\cap B_{R_0}}|\phi(x,T)|^2e^{-\frac{|x-x_g|^2}{4\lambda}}dx
\leq2\int_{B_r}|\phi(x,T)|^2e^{-\frac{|x-x_g|^2}{4\lambda}}dx.
\end{eqnarray}

Now, we are going to prove  (\ref{4.8aa}). One can easily check from \eqref{truncation} and $\phi:=\sigma_0\cdot u$
that
\begin{eqnarray*}
&&\int_{\Omega\cap B_{R_g}}|u(x,T)|^2dx \leq e^{\frac{R_g^2}{4\lambda}}\int_{\Omega\cap B_{R_0}}|\phi(x,T)|^2e^{-\frac{|x-x_g|^2}{4\lambda}}dx.
\end{eqnarray*}
This, along with  \eqref{3.39}   and  \eqref{truncation}, shows
\begin{eqnarray*}
\int_{\Omega\cap B_{R_g}}|u(x,T)|^2dx \leq 2e^{\frac{R_g^2}{4\lambda}}\int_{B_r}|\phi(x,T)|^2e^{-\frac{|x-x_g|^2}{4\lambda}}dx \leq 2e^{\frac{R_g^2}{4\lambda}}\int_{B_r}|u(x,T)|^2dx.
\end{eqnarray*}
Since $\lambda=\mu\varepsilon=\mu k \theta$ (where $k=\min\{\frac{L_6}{2L_2},\frac{1}{2}\}$ and  $\mu=\frac{1}{2C_0}$), the above leads to
\begin{eqnarray*}
\int_{\Omega\cap B_{R_g}}|u(x,T)|^2dx \leq 2e^{\frac{1}{\theta}\frac{R_g^2}{4\mu k}}\int_{B_r}|u(x,T)|^2dx.
\end{eqnarray*}
This, along with \eqref{3.8,1-25} and \eqref{3.9,1-25}, indicates
\begin{eqnarray}\label{3.37}
\int_{\Omega\cap B_{R_g}}|u(x,T)|^2dx
\leq 2\bigg(e^{L_4M^2T}e^{L_5(1+\frac{1}{T})}\frac{\mathbb{E}}{\int_{\Omega\cap B_{R_g}}|u(x,T)|^2dx}\bigg)^{\frac{L_3R_g^2}{4\mu k}}\int_{B_r}|u(x,T)|^2dx.
\end{eqnarray}
 Meanwhile,  according to Theorem \ref{proposition3.2},
\begin{eqnarray*}
\mathbb{E}\leq (1+Te^{L_1M^2T})\int_{\Omega}|u(x,0)|^2dx\leq (1+T)e^{L_1M^2T}\int_{\Omega}|u(x,0)|^2dx.
\end{eqnarray*}
This, together with \eqref{3.37}, implies
\begin{eqnarray*}
\int_{\Omega\cap B_{R_g}}|u(x,T)|^2dx
\leq 2\bigg(D\frac{\int_{\Omega}|u(x,0)|^2dx}{\int_{\Omega\cap B_{R_g}}|u(x,T)|^2dx}\bigg)^{\frac{L_3R_g^2}{4\mu k}}\int_{B_r}|u(x,T)|^2dx,
\end{eqnarray*}
where $D=(1+T)e^{(L_1+L_4)M^2T+L_5(1+\frac{1}{T})}$.
The above  leads to \eqref{4.8aa} with
$\gamma=\frac{L_3R_g^2}{4\mu k+L_3R_g^2}$ (which depends only on $\Omega, r, R_g, \delta_g, M$, and $T$).
This completes the proof of Theorem \ref{lemma4.4}.
\end{proof}
\begin{remark}\label{remark3.3,1-31}
It deserves mentioning that   \eqref{4.8aa} is a local interpolation inequality of the boundary case
for the system (\ref{1.1}).

By the same argument used in Theorem \ref{lemma4.4}, we can verify the local interpolation inequality of the interior case for
the system (\ref{1.1}): {\it for each
 $p_0\in\Omega$, there is a positive number $r_0$, with $B(p_0, 3r_0)\subseteq\Omega$. (Notice that
the open ball $B(p_0, 3r_0)$ is star-shaped with the center $p_0$.)
 Then,  there are  two constants
$D=D(r_0, \Omega, T, M)>0$ and $\gamma=\gamma(r_0, \Omega, T, M)\in(0,1)$ so that
\begin{eqnarray*}
 \int_{B(p_0, r_0)}|u(x,T)|^2dx
\leq \bigg(D\int_{\Omega}|u_0(x)|^2dx\bigg)^\gamma\bigg(2\int_{B(p_0, r_0/2)}|u(x,T)|^2dx\bigg)^{1-\gamma}.
\end{eqnarray*}}
We  omit the detailed proof.
\end{remark}

\section{Proof of the main results}
This section first proves  Theorem \ref{theorem},
 then it gives a
   qualitative unique continuation property for the system (\ref{1.1}), as  a consequence of
 Theorem \ref{theorem}.
\subsection{Proof of Theorem \ref{theorem}}

\begin{proof}[Proof of Theorem \ref{theorem}]
The proof will be organized in two steps.

\noindent{\it Step 1.  We  prove  (\ref{1.4}), with the help of Thoerem \ref{lemma4.4}.}

Since $\omega$ is a nonempty open subset of $\Omega$,  we
can find $x_0\in \omega$ and $r>0$ so that the open ball $B(x_0,r)$ belongs to $\omega$.
We are going to split the proof into two sub-steps.

\noindent{\it Sub-step 1.1. We  deal with the boundary of $\Omega$.}

 Since $\Omega$ is bounded domain with a $C^2$ boundary $\partial\Omega$,
it follows from \eqref{3.1} that
\begin{eqnarray*}
\partial\Omega\subset\cup_{g\in\partial\Omega}B(x_g,R_g)\;  \textrm{ and } \;\Omega\cap B(x_g,(1+2\delta_g)R_{g}) \textrm{ is star-shaped with } x_{g},
\end{eqnarray*}
where the triplet $(x_{g}, R_{g}, \delta_g)\in\Omega\times\mathbb{R}^{+}\times(0,1]$ corresponding to $g\in\partial\Omega$ is given by \eqref{3.1}.  Then by the compactness of $\partial\Omega$, we
can find a finite set of triplets $(x_i, R_i, \delta_i)\in\Omega\times\mathbb{R}^{+}\times(0,1]$
$(i=1,2,\ldots,m_1)$ such that $\partial\Omega\subset\cup_{i=1,2,\ldots,m_1}B(x_i,R_i)$
and such that each  $\Omega\cap B(x_i,(1+2\delta_i)R_i)$ is star-shaped with respect to  $x_i$. Let
\begin{eqnarray}\label{4.1aa}
\Theta_1=\cup_{i=1,2,\ldots,m_1}\Omega\cap B(x_i,R_i).
\end{eqnarray}

We  claim that there exist two constants $D=D(\Theta_1, \Omega, r, M, T)>0$ and $\gamma_1=\gamma_1(\Theta_1, \Omega, r, M, T)\in(0,1)$ such that
\begin{eqnarray}\label{4.1}
 \int_{\Theta_1}|u(x,T)|^2dx
\leq D\bigg(\int_{\Omega}|u_0(x)|^2dx\bigg)^{\gamma_1}\bigg(\int_{B(x_0,r)}|u(x,T)|^2dx\bigg)^{1-\gamma_1}.
\end{eqnarray}

In fact, for each $i\in\{1,2,\ldots,m_1\}$, we can choose $\rho_i\in(0,R_i)$ and finitely
many points $q_{i,1}, q_{i,2},  \ldots, q_{i,d_i}\in\Omega$ so that
\begin{eqnarray}\label{chain}
\begin{cases}
x_i=q_{i,1}; \\
 B(q_{i,j},\rho_i/2)\subset B(q_{i,j+1},\rho_i),\: \forall \: j=1,2,\ldots, d_i-1;\\
B(q_{i,d_i},\rho_i)\subset B(x_0,r);\\
B(q_{i,j},3\rho_i)\subset\Omega,  \: \forall \: j=1,2,\ldots, d_i,
\end{cases}
\end{eqnarray}
which forms a chain of balls along a curve connecting  $q_{i,1}$ with $q_{i,d_i}$. Then, it follows from Theorem \ref{lemma4.4} that
there are constants $D_{i,1}>0$ and $\alpha_{i,1}\in(0,1)$ (which depend on $\rho_i, R_i, \delta_i, \Omega, M$, and $T$)
so that
\begin{eqnarray*}
 \int_{\Omega\cap B(x_i, R_i)}|u(x,T)|^2dx
\leq D_{i,1}\bigg(\int_{\Omega}|u_0(x)|^2dx\bigg)^{\alpha_{i,1}}\bigg(\int_{B(x_i, \: \rho_i/2)}|u(x,T)|^2dx\bigg)^{1-\alpha_{i,1}},
\end{eqnarray*}
which, along with the first fact   in \eqref{chain}, yields
\begin{eqnarray}\label{4.5bb}
 \int_{\Omega\cap B(x_i, R_i)}|u(x,T)|^2dx
&\leq& D_{i,1}\bigg(\int_{\Omega}|u_0(x)|^2dx\bigg)^{\alpha_{i,1}}\bigg(\int_{B(q_{i,1}, \: \rho_i/2)}|u(x,T)|^2dx\bigg)^{1-\alpha_{i,1}}.
\end{eqnarray}

Now, we will propagate the  interpolation inequality \eqref{4.5bb}
along the chain of balls \eqref{chain}. First, combining \eqref{4.5bb} with the second fact  in \eqref{chain}
leads to
\begin{eqnarray}\label{4.4ee}
 \int_{\Omega\cap B(x_i, R_i)}|u(x,T)|^2dx
\leq D_{i,1}\bigg(\int_{\Omega}|u_0(x)|^2dx\bigg)^{\alpha_{i,1}}\bigg(\int_{B(q_{i,2}, \: \rho_i)}|u(x,T)|^2dx\bigg)^{1-\alpha_{i,1}}.
\end{eqnarray}
Next, we deal with
 the term $\int_{B(q_{i,2}, \: \rho_i)}|u(x,T)|^2dx$ in \eqref{4.4ee} in the following manner:
 As mentioned in Remark \ref{remark3.3,1-31}. (Notice the fourth fact in  \eqref{chain}.)
 there are  two constants $C_{i,2}=C_{i,2}(\rho_i, \Omega, M, T)>0$ and $\beta_{i,2}=\beta_{i,2}(\rho_i, \Omega, M, T)\in(0,1)$ so that
\begin{eqnarray}\label{4.6,1-31}
\int_{B(q_{i,2}, \: \rho_i)}|u(x,T)|^2dx
\leq C_{i,2}\bigg(\int_{\Omega}|u_0(x)|^2dx\bigg)^{\beta_{i,2}}\bigg(\int_{B(q_{i,2}, \: \rho_i/2)}|u(x,T)|^2dx\bigg)^{1-\beta_{i,2}}.
\end{eqnarray}
Combining   \eqref{4.4ee} with \eqref{4.6,1-31} leads to
\begin{eqnarray*}
 && \int_{\Omega\cap B(x_i, R_i)}|u(x,T)|^2dx
\leq  D_{i,1}\bigg(\int_{\Omega}|u_0(x)|^2dx\bigg)^{\alpha_{i,1}}   \crr\disp
&\times& \bigg(C_{i,2}\big(\int_{\Omega}|u_0(x)|^2dx\big)^{\beta_{i,2}}\big(\int_{B(q_{i,2}, \: \rho_i/2)}|u(x,T)|^2dx\big)^{1-\beta_{i,2}}\bigg)^{1-\alpha_{i,1}}\crr\disp
&=&   D_{i,2}\bigg(\int_{\Omega}|u_0(x)|^2dx\bigg)^{\alpha_{i,2}}  \bigg(\int_{B(q_{i,2}, \: \rho_i/2)}|u(x,T)|^2dx\bigg)^{1-\alpha_{i,2}},
\end{eqnarray*}
where $D_{i,2}=D_{i,1}\cdot C_{i,2}^{1-\alpha_{i,1}}>0$ and $\alpha_{i,2}=\alpha_{i,1}+\beta_{i,2}(1-\alpha_{i,1})\in(0,1)$.
Propagating interpolation inequalities  finite times along the chain of balls \eqref{chain}, we
can find  constants $D_i=D_i(\rho_i, R_i, \delta_i, \Omega, M, T)>0$ and $\alpha_i=\alpha_i(\rho_i, R_i, \delta_i, \Omega, M, T)\in(0,1)$ such that
\begin{eqnarray*}
 \int_{\Omega\cap B(x_i, R_i)}|u(x,T)|^2dx
\leq D_i\bigg(\int_{\Omega}|u_0(x)|^2dx\bigg)^{\alpha_i}\!\bigg(\int_{B(q_{i,d_i-1}, \: \rho_i/2)}|u(x,T)|^2dx\bigg)^{1-\alpha_{i}}.
\end{eqnarray*}
This, along with the second and the third fact of \eqref{chain}, yields that when $i\in\{1,\dots,m_1\}$,
\begin{eqnarray}\label{4.6}
 \int_{\Omega\cap B(x_i, R_i)}|u(x,T)|^2dx
\leq D_i\bigg(\int_{\Omega}|u_0(x)|^2dx\bigg)^{\alpha_i}\!\bigg(\int_{B(x_0, \: r)}|u(x,T)|^2dx\bigg)^{1-\alpha_i}.
\end{eqnarray}
Let
\begin{eqnarray}\label{4.8bb}
\gamma_1:=\max\{\alpha_i\:|\: i=1,2,\ldots, m_1\}.
\end{eqnarray}
We can easily check that $\gamma_1\in(0,1)$. This, together with \eqref{4.6}, \eqref{4.8bb}  and \eqref{3.8aaa}, implies that for each $i\in\{1,\dots,m_1\}$,
\begin{eqnarray}\label{4.9aa}
 &&\int_{\Omega\cap B(x_i, R_i)}|u(x,T)|^2dx\leq D_i\bigg(\int_{\Omega}|u_0(x)|^2dx\bigg)^{\alpha_i}\crr\disp
&\times& \bigg(\int_{B(x_0, \: r)}|u(x,T)|^2dx\bigg)^{1-\gamma_1}\bigg(e^{L_1M^2T}\int_{\Omega}|u_0(x)|^2dx\bigg)^{\gamma_1-\alpha_{i}}\crr\disp
&=&\bar{D}_i\bigg(\int_{\Omega}|u_0(x)|^2dx\bigg)^{\gamma_1}\!\bigg(\int_{B(x_0, \: r)}|u(x,T)|^2dx\bigg)^{1-\gamma_1},
\end{eqnarray}
where $\bar{D}_i=D_i\cdot e^{(\gamma_1-\alpha_{i})L_1M^2T}>0$.
Finally, by  \eqref{4.1aa} and \eqref{4.9aa}, we get \eqref{4.1},
 with $D:=\sum_{i=1}^{m_1}\bar{D}_i$ and $\gamma_1$ given by \eqref{4.8bb}.

\noindent{\it Sub-step 1.2. We deal with the interior of $\Omega$.}

 It is obvious that there exists a compact subset $\Theta_2\subset\Omega$ such that $\Omega\subseteq \Theta_1\cup\Theta_2$.
By the compactness of $\Theta_2$, there is a constant $R>0$ and finitely many points $y_1, y_2\ldots, y_{m_2}\in\Omega$ such that
$\Theta_2\subset\cup_{i=1,2,\ldots,m_2}B(y_i,R)$ and $B(y_i,3R)\subset\Omega$ for each $i\in\{1, 2, \ldots, m_2\}$. Then,
by the same method used to prove \eqref{4.1}, we can find
constants $D=D(\Theta_2,\Omega, r,M, T)>0$ and $\gamma_2=\gamma_2(\Theta_2,\Omega, r, M, T)\in(0,1)$ so that
\begin{eqnarray}\label{4.16}
 \int_{\Theta_2}|u(x,T)|^2dx
\leq D\bigg(\int_{\Omega}|u_0(x)|^2dx\bigg)^{\gamma_2}\bigg(\int_{B(x_0,r)}|u(x,T)|^2dx\bigg)^{1-\gamma_2}.
\end{eqnarray}

Finally, by (\ref{4.1}), (\ref{4.16}), (\ref{M}), and Lemma \ref{lemma2.1}, we  obtain  (\ref{1.4}).\

\noindent{\it Step 2.   We  prove \eqref{1.5}.}

The proof will also be split into two sub-steps.

\noindent{\it Sub-step 2.1.  We  prove that   $u(\cdot,t)\neq0$ for each $t\in[0,T]$.}

By contradiction, we suppose that  $u(\cdot,t)=0$ for some $t\in(0,T]$.
Then by the assumption that $u_0\neq 0$, we have that  $0<T_0\leq T$, where
 \begin{eqnarray*}
T_0:=\inf\{t\in(0,T]\:|\: u(\cdot,t)=0\}.
 \end{eqnarray*}
 This, along with  the continuity  of $u$, yields
  \begin{eqnarray}\label{4.11aa}
 u(\cdot, T_0)=0  \textrm{  and  }  u(\cdot, t)\neq0,  \textrm{  for each  } t\in[0,T_0).
 \end{eqnarray}
Let
   \begin{eqnarray}\label{4.10,1.31}
 \zeta(t) := {\|u(\cdot, t)\|_2^2\over \|u(\cdot, t)\|^2_{H^{-1}}},\; t\in[0,T_0),
 \end{eqnarray}
 where  $\|\cdot\|_{H^{-1}}$ is the norm of  $H^{-1}(\Omega)$.

We now claim the following backward uniqueness estimate for $u$:
\begin{eqnarray}\label{4.29}
 \|u_0\|_{H^{-1}}^2\le \exp(2e^{C(\Omega)M^2T} \left(\zeta(0) + C(\Omega)M\sqrt{\zeta(0)}\right)T)\|u(\cdot,t)\|_{H^{-1}}^2, \textrm{ for each } t\in[0,T_0).
\end{eqnarray}
 To this end, by multiplying  the first equation of (\ref{1.1}) by $u$ and $(-\triangle)^{-1}u$ respectively, and integrating them over $\Omega$,  we obtain that for each $t\in[0,T]$,
\begin{eqnarray}\label{3.6}
\begin{cases}
{1\over 2}{d\over dt}\|u(\cdot, t)\|_2^2 + \|u(\cdot, t)\|^2_{H^1_0} = \langle-\nabla(u(\cdot, t)\nabla v(\cdot, t)), u(\cdot, t)\rangle,\\
{1\over 2}{d\over dt}\|u(\cdot, t)\|_{H^{-1}}^2 + \|u(\cdot, t)\|_2^2 = \langle-\nabla(u(\cdot, t)\nabla v(\cdot, t)), (-\triangle)^{-1}u(\cdot, t)\rangle_{H^{-1}, H_0^1}.
\end{cases}
\end{eqnarray}
 (Here, $\|\cdot\|_{H^{1}_0}$ denotes the norm of the space  $H^{1}_0(\Omega)$, $\langle \cdot,\cdot\rangle$
 is the inner product in  $L^2(\Omega)$,
and $\langle\cdot,\cdot\rangle_{H^{-1}, H_0^1}$ stands for  the pair between $H^{-1}(\Omega)$ and $H^{1}_0(\Omega)$.)
Write
\begin{eqnarray}\label{4.12,1.31}
f(x,t) := -\nabla(u(x,t) \nabla v(x,t) ),\;\;x\in\Omega,\;t\in[0,T].
\end{eqnarray}
By \eqref{4.12,1.31}, \eqref{4.3bb}, and \eqref{3.6},
after direct computations, we get that
 \begin{eqnarray}\label{4.29aa}
\|f\|_{H^{-1}} &=& \|\nabla (u\nabla v)\|_{H^{-1}}\le  \| u \nabla v\|_{[L^2(\Omega)]^n} \crr\disp
 &\le& \|u\|_{2}\|\nabla v\|_{[L^\infty(\Omega)]^n}  \le C(\Omega)M\|u\|_{2}, \;\; t\in[0,T];
\end{eqnarray}
\begin{eqnarray}\label{4.11}
\zeta'(t)&=&\frac{2}{\|u\|_{H^{-1}}^4}\big(\langle f, u\rangle\|u\|^2_{H^{-1}}-\|u\|^2_{H_0^1}\|u\|^2_{H^{-1}}\crr\disp
& - &\langle f, (-\Delta)^{-1}u\rangle_{H^{-1}, H_0^1}\|u\|_2^2 +\|u\|_2^4\big), \;\; t\in[0,T_0);
\end{eqnarray}
\begin{eqnarray}\label{4.15,1.31}
&&\|u\|_2^4 - \|u\|_2^2\langle f, (-\triangle)^{-1}u\rangle_{H^{-1}, H_0^1}  \nonumber\\
&=& |\langle\triangle u+ {f/2}, (-\triangle)^{-1}u\rangle_{H^{-1}, H_0^1}|^2-|\langle{f/2}, (-\triangle)^{-1}u\rangle_{H^{-1}, H_0^1}|^2 \nonumber \\
&\leq& \|\triangle u+ {f/2}\|_{H^{-1}}^2\cdot\|(-\triangle)^{-1}u\|_{H_0^1}^2-|\langle{f/2}, (-\triangle)^{-1}u\rangle_{H^{-1}, H_0^1}|^2 \nonumber \\
&=&\big(\|u\|^2_{H_0^1}+\|{f/2}\|_{H^{-1}}^2-\langle f,u\rangle\big)\|u\|^2_{H^{-1}}-|\langle{f/2}, (-\triangle)^{-1}u\rangle_{H^{-1}, H_0^1}|^2, \;\;  t\in[0,T].
\end{eqnarray}
By \eqref{4.11} and \eqref{4.15,1.31}, we see
\begin{eqnarray*}
\zeta'(t)\le {2\over \|u\|^2_{H^{-1}}}\|{f/2}\|_{H^{-1}}^2,\;\;t\in[0,T_0),
\end{eqnarray*}
which, together with  \eqref{4.29aa}, yields
 \begin{eqnarray}\label{4.27}
 \zeta(t)\le e^{C(\Omega)M^2t}\zeta(0),\;\;t\in[0,T_0).
 \end{eqnarray}
Now, by  the second equation in (\ref{3.6}), \eqref{4.27}, and  \eqref{4.29aa},  we have
   \begin{eqnarray*}
 &&0\le {1\over 2}{d\over dt}\|u\|_{H^{-1}}^2 +\zeta(t) \|u\|_{H^{-1}}^2 + |\langle f, (-\triangle)^{-1}u\rangle_{H^{-1}, H_0^1}|\\
 &&\quad \le {1\over 2}{d\over dt}\|u\|_{H^{-1}}^2 +\zeta(t)\|u\|_{H^{-1}}^2 + \|f\|_{H^{-1}}\| (-\triangle)^{-1}u\|_{H^{1}_0}\\
 &&\quad\le {1\over 2}{d\over dt}\|u\|_{H^{-1}}^2 +\zeta(0)e^{C(\Omega)M^2t} \|u\|_{H^{-1}}^2 + C(\Omega)M\|u\|_{2}\|u\|_{H^{-1}},\;\;t\in[0,T_0),
 \end{eqnarray*}
 which, along with \eqref{4.10,1.31} and \eqref{4.27}, shows
  \begin{eqnarray*}
 &&0\leq{1\over 2}{d\over dt}\|u\|_{H^{-1}}^2 +e^{C(\Omega)M^2T} \big(\zeta(0) + C(\Omega)M\sqrt{\zeta(0)}\big)\|u\|_{H^{-1}}^2,\;\;t\in[0,T_0).
 \end{eqnarray*}
  Multiplying the above by $\exp\big(2e^{C(\Omega)M^2T} \big(\zeta(0) + C(\Omega)M\sqrt{\zeta(0)}\big)t\big)$, and then
  integrating it over  $(0, t)$, where $t\in[0,T_0)$, we obtain  (\ref{4.29}).\\

Next, it follows  from (\ref{4.29}) that when $t\in[0,T_0)$,
 \begin{eqnarray*}
&& \frac{\|u_0\|_{2}^2}{\|u(\cdot, t)\|_{2}^2}\leq \frac{\|u_0\|_{H^{-1}}^2}{\|u(\cdot, t)\|_{H^{-1}}^2}\zeta(0)\nonumber\\
&\leq& \zeta(0)\exp\left(2e^{C(\Omega)M^2T} (\zeta(0) + C(\Omega)M\sqrt{\zeta(0)})T\right).
 \end{eqnarray*}
  Since $\zeta(0)\geq1$,  we have that $\sqrt{\zeta(0)}\leq\zeta(0)$  and  $\zeta(0)<\exp\big(\zeta(0)\big)$.
 These show that
  \begin{eqnarray}\label{4.20}
 \frac{\|u_0\|_{2}^2}{\|u(\cdot, t)\|_{2}^2}\leq  \exp\left(C(\Omega)(1+MT)e^{C(\Omega)M^2T} \zeta(0)\right),\;\;t\in[0,T_0).
 \end{eqnarray}

Meanwhile, by $(ii)$ in Remark  \ref{remark2.3,2-11} and the assumption that $u_0\in L^{\infty}(\Omega)$, we see that $u\in C([0,T];L^2(\Omega))$.
 This, along with \eqref{4.11aa}, implies that
 \begin{eqnarray*}
 \lim_{t\rightarrow T_0^{-}}\|u(\cdot, t)\|_{2}^2=\|u(\cdot, T_0)\|_{2}^2=0,
 \end{eqnarray*}
 which, together with the assumption that $u_0\neq 0$, contradicts \eqref{4.20}.
 Hence, we finish the proof of {\it Sub-step 2.1}.

\noindent{\it Sub-step 2.2. We  prove \eqref{1.5}.}

According to  {\it Sub-step 2.1},  the function $t\rightarrow{\|u(\cdot, t)\|_2^2\over \|u(\cdot, t)\|^2_{H^{-1}}}$ (see \eqref{4.10,1.31}) is well defined over $[0,T]$. We still use
$\zeta(\cdot)$ to denote this function on $[0,T]$.
 Then by the same method in  the proof of  \eqref{4.20}, we can verify that
  \begin{eqnarray*}
 \|u_0\|_{2}^2\leq  \exp\big(C(\Omega)(1+MT)e^{C(\Omega)M^2T} \zeta(0)\big)\|u(\cdot, T)\|_{2}^2,
 \end{eqnarray*}
which, together with (\ref{1.4}), gives (\ref{1.5}).

 Hence, we  finish the proof of  Theorem \ref{theorem}.
\end{proof}

\subsection{Consequence of  Theorem \ref{theorem}}
This subsection presents a qualitative unique continuation property  for the system (\ref{1.1}), which is a consequence of
 Theorem \ref{theorem}.

\begin{corollary}\label{theorem1.3}
Let  $u_0\in L^p(\Omega)$ with $n<p\leq+\infty$ and let $\omega$ be a nonempty open subset of $\Omega$. Suppose that  $(u,v)$ is the solution to the system (\ref{1.1}) over $[0,T]$ for some $T>0$.
Then
   \begin{eqnarray}\label{4.17}
u=0\;\mbox{over}\;\Omega\times[0, T] \: \textrm{ and } v=0\;\mbox{over}\;\Omega\times(0, T],
 \end{eqnarray}
 provided that either $u(\cdot,T)=0$ over $\omega$ or $v(\cdot,T)=0$ over $\omega$.
 \end{corollary}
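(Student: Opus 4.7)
The plan is to read off Corollary \ref{theorem1.3} directly from Theorem \ref{theorem} by two reductions: first, reducing the hypothesis ``$v(\cdot,T)=0$ on $\omega$'' to ``$u(\cdot,T)=0$ on $\omega$'' through the second equation of \eqref{1.1}; and second, reducing the case of a general initial datum $u_0\in L^p(\Omega)$ with $n<p<+\infty$ to the $L^\infty$ setting required by Theorem \ref{theorem}, using the instantaneous smoothing in Theorem \ref{theorem4.2}(ii).

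First, I would dispose of the hypothesis ``$v(\cdot,T)=0$ on $\omega$''. Since $v(\cdot,T)\in W^{2,p}(\Omega)$ by Theorem \ref{theorem4.2}, vanishing on the open set $\omega$ forces all of its weak derivatives (in particular $\Delta v(\cdot,T)$) to vanish a.e.\ on $\omega$. The elliptic equation $-\Delta v+a v-b u=0$ evaluated at $t=T$ then gives $b\,u(\cdot,T)=0$ a.e.\ on $\omega$, and since $b>0$, we recover $u(\cdot,T)=0$ on $\omega$. Hence it suffices to work under the latter hypothesis.

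Next, assume $u_0\in L^\infty(\Omega)$ and $u(\cdot,T)\equiv 0$ on $\omega$. Plugging $\int_\omega |u(x,T)|^2\,dx=0$ into the right hand side of \eqref{1.4} of Theorem \ref{theorem} immediately gives $u(\cdot,T)\equiv 0$ and $v(\cdot,T)\equiv 0$ on the whole of $\Omega$. If $u_0\neq 0$, inequality \eqref{1.5} would now have a strictly positive left hand side and a zero right hand side, a contradiction; so $u_0\equiv 0$, and Remark \ref{remark2.3,2-11}(iii) delivers \eqref{4.17}. For general $u_0\in L^p(\Omega)$, for each $t_0\in(0,T)$ the conclusion $u(\cdot,t_0)\in L^\infty(\Omega)$ from Theorem \ref{theorem4.2}(ii) lets me replay the above argument on the shifted interval $[t_0,T]$ (the assumption at time $T$ is unaffected by the shift), obtaining $u(\cdot,t_0)\equiv 0$ on $\Omega$; passing $t_0\to 0^+$ along the $C([0,T];L^p(\Omega))$-continuity of $u$ yields $u_0=0$ in $L^p(\Omega)$, and then Theorem \ref{theorem4.2}'s uniqueness (equivalently Remark \ref{remark2.3,2-11}(iii)) closes the argument.

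The two potentially subtle points are exactly where I expect to need the most care: (i) justifying rigorously in Step 1 that $v(\cdot,T)|_\omega\equiv 0$ transfers to $\Delta v(\cdot,T)|_\omega=0$ a.e.\ (handled cleanly because $v(\cdot,T)\in W^{2,p}(\Omega)$ for $p\in(n,+\infty)$, and the weak second derivatives of a locally constant $W^{2,p}$ function vanish on the open set where it is constant), and (ii) ensuring the shifted problem in Step 3 fits the hypotheses of Theorem \ref{theorem}; this is automatic since $(u,v)$ restricted to $[t_0,T]$ solves \eqref{1.1} with $L^\infty$-initial datum $u(\cdot,t_0)$, and the quantities $\gamma, D, C$ in Theorem \ref{theorem} are allowed to depend on $\|u(\cdot,t_0)\|_\infty$ and $T-t_0$ — I never need uniformity in $t_0$, only the fact that each such $t_0$ yields $u(\cdot,t_0)\equiv 0$.
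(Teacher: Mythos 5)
Your proposal is correct and follows essentially the same route as the paper: reduce the $v(\cdot,T)=0$ hypothesis to $u(\cdot,T)=0$ via the elliptic equation (the paper does this with an explicit test-function computation, you with the equivalent observation that a $W^{2,p}$ function vanishing on an open set has vanishing weak derivatives there), apply Theorem \ref{theorem} in the $L^\infty$ case, and handle $n<p<+\infty$ by shifting the time origin to an arbitrarily small $t_0>0$ using the instantaneous $L^\infty$-smoothing from Theorem \ref{theorem4.2}(ii) and then letting $t_0\to 0^+$.
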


\begin{proof}
We organize the proof in two steps.

\noindent{\it Step 1.  We show the corollary  when $p=+\infty$.}

In the first case that
$u(\cdot,T)=0$ over $\omega$,
we can apply \eqref{1.5} to see that
$u_0=0$,
 which, along with $(iii)$ of Remark \ref{remark2.3,2-11},
 leads to \eqref{4.17} in this case.

We next consider the second case that $v(\cdot,T)=0$ over $\omega$. Multiplying the second equation of (\ref{1.1}) by a test function  $\chi\in C_0^\infty(\Omega)$ with $\textrm{supp}\:\chi\subset\omega$, we find
\begin{eqnarray*}
&&\int_{\Omega}u(\cdot,T)\chi dx=\frac{1}{b}\int_{\Omega}[-\triangle v(\cdot,T)+av(\cdot,T)]\chi dx  \crr\disp
&=&\frac{1}{b}\big[\int_{\Omega}av(\cdot,T)\chi dx+ \int_{\Omega} v(\cdot,T)(-\triangle\chi)dx\big]=0,
\end{eqnarray*}
which yields that  $u(\cdot,T)=0$ over $\omega$, i.e.,
we return to the first case.
Consequently,  \eqref{4.17} is true for the second case.

\noindent{\it Step 2.  We show the corollary  when  $n<p<+\infty$.}

 Arbitrarily fix $\epsilon\in(0, T)$. We define two functions
 $u_{\epsilon}$ and $v_{\epsilon}$
  on
$\Omega\times[0,T-\epsilon]$ by setting $u_{\epsilon}(x,t):=u(x,t+\epsilon)$ and $v_{\epsilon}(x,t):=v(x,t+\epsilon)$, respectively. It is
obvious that $u_{\epsilon}$ and $v_{\epsilon}$ satisfy
 \begin{eqnarray*}
\begin{cases}
u_{\epsilon t}-\triangle u_\epsilon(x,t)+\nabla\cdot(u_\epsilon(x,t)\nabla v_\epsilon(x,t))=0,& \textrm{ in } \Omega\times(0,T-\epsilon],\\
-\triangle v_{\epsilon}(x,t)+av_{\epsilon}(x,t)-bu_{\epsilon}(x,t)=0,& \textrm{ in } \Omega\times(0,T-\epsilon],\\
u_{\epsilon}(x, t)=0, \: v_{\epsilon}(x,t)=0,&  \textrm{ on } \partial\Omega\times(0,T-\epsilon],\\
u_{\epsilon}(x,0)=u(x,\epsilon),     &   \textrm{ in } \Omega.
\end{cases}
\end{eqnarray*}
Since $u_0\in L^p(\Omega)$ $(n<p<+\infty)$, it follows from the conclusion $(ii)$ in Theorem \ref{theorem4.2}  that $u_{\epsilon}(\cdot,0)=u(\cdot,\epsilon)\in L^\infty(\Omega)$ and $(u_\epsilon, v_\epsilon)\in [L^\infty(0,T-\epsilon; L^\infty(\Omega))]^2$.
Thus we can apply Theorem \ref{theorem} (where $(u,v)$ is replaced by $(u_{\epsilon},v_{\epsilon})$)
 to see what follows:

 \noindent
$(i)$ There are constants  $\gamma= \gamma(\Omega, \omega, \|u_{\epsilon}(\cdot,0)\|_{\infty}, T-\epsilon)\in (0,1)$ and $D=D(\Omega, \omega, \|u_{\epsilon}(\cdot,0)\|_{\infty}, T-\epsilon)>0$
  so that
\begin{eqnarray*}
\int_{\Omega}|u_\epsilon(x,T-\epsilon)|^2dx\leq D\!\left(\int_{\Omega}|u_\epsilon(x,0)|^2dx\right)^{1-\gamma}\!\!\left(\int_{\omega}|u_\epsilon(x,T-\epsilon)|^2dx\right)^\gamma.
\end{eqnarray*}

\noindent
$(ii)$ When $u(\cdot,\epsilon)\neq 0$, there is   $C=C(\Omega,\omega, \|u_{\epsilon}(\cdot,0)\|_{\infty}, T-\epsilon)>0$ so that
\begin{eqnarray*}
\int_{\Omega}|u_\epsilon(x,0)|^2dx\leq C\exp\left(C{\|u_\epsilon(x,0)\|_{L^2(\Omega)}^2\over \|u_\epsilon(x,0)\|_{H^{-1}(\Omega)}^2}\!\right)
\!\times\! \int_{\omega}(|u_{\epsilon}(x,T-\epsilon))|^2)dx.
\end{eqnarray*}

Now, since  $u(x,T)=0$ over $\omega$ or $v(x,T)=0$ over $\omega$,   we have
that $u_\epsilon(x,T-\epsilon)=0$ over $\omega$ or $v_\epsilon(x,T-\epsilon)=0$ over $\omega$.
Thus, with the aid of the above $(i)$ and $(ii)$, we can use the result obtained in Step 1 (where $(u,v)$ is replaced by $(u_{\epsilon},v_{\epsilon})$) to get
  $u_\epsilon=v_\epsilon=0$ over $\Omega\times(0, T-\epsilon]$,
  i.e.,  $(u,v)=0$ over $\Omega\times(\epsilon, T]$.  Since $\epsilon$ can be arbitrarily taken from  $(0, T)$, we find that $(u,v)=0$ over $\Omega\times(0, T]$. This, along with the continuity of $u$, leads to  \eqref{4.17}.
  This  ends the proof.
\end{proof}


\end{document}